\theoremstyle{plain}
\newtheorem{theorem}{Theorem}[section]
\newtheorem{lemma}[theorem]{Lemma}
\newtheorem{proposition}[theorem]{Proposition}
\theoremstyle{definition}
\newtheorem{definition}[theorem]{Definition}
\theoremstyle{remark}
\newtheorem{remark}{Remark}
\DeclareMathOperator*{\argmax}{arg\,max}
\begin{document}

\articletype{ARTICLE TEMPLATE}

\title{Optimization by moving ridge functions: \\
Derivative-free optimization for computationally intensive functions}

\author{
\name{James C. Gross\thanks{CONTACT James C. Gross Email: jg847@cam.ac.uk} and Geoffrey T. Parks}
\affil{Department of Engineering, University of Cambridge, Cambridge, UK}
}

\maketitle

\begin{abstract}
A novel derivative-free algorithm, optimization by moving ridge functions (OMoRF), for unconstrained and bound-constrained optimization is presented. This algorithm couples trust region methodologies with output-based dimension reduction to accelerate convergence of model-based optimization strategies. The dimension-reducing subspace is updated as the trust region moves through the function domain, allowing OMoRF to be applied to functions with no known global low-dimensional structure. Furthermore, its low computational requirement allows it to make rapid progress when optimizing high-dimensional functions. Its performance is examined on a set of test problems of moderate to high dimension and a high-dimensional design optimization problem. The results show that OMoRF compares favourably to other common derivative-free optimization methods, even for functions in which no underlying global low-dimensional structure is known. 
\end{abstract}

\begin{keywords}
derivative-free optimization; nonlinear optimization; trust region methods; dimension reduction; ridge functions; active subspaces
\end{keywords}

\section{Introduction}

Derivative-free optimization (DFO) methods seek to solve optimization problems using only function evaluations --- that is, without the use of derivative information. These methods are particularly suited for cases where the objective function is a `black-box' or computationally intensive \citep{Conn2009}. In these cases, computing gradients analytically or through algorithmic differentiation may be infeasible and approximating gradients using finite differences may be intractable. Common applications of DFO methods include engineering design optimization \citep{Kipouros2008}, hyper-parameter optimization in machine learning \citep{Ghanbari2017}, and more \citep{Levina2009}. Derivative-free trust region (DFTR) methods are an important class of DFO which iteratively create and optimize a local surrogate model of the objective in a small region of the function domain, called the trust region. Unlike standard trust region methods, DFTR methods use interpolation or regression to construct a surrogate model, thereby avoiding the use of derivative information.
  
However, acquiring enough samples for surrogate model construction may be computationally prohibitive for problems of moderate to high dimension. This issue is magnified when considering computationally intensive functions, such as computational fluid dynamics (CFD) or finite element method (FEM) simulations, where a single function evaluation may require minutes, hours, or even days \citep{Gu2001}. For these functions, the cost of optimization is dominated by the cost of function evaluation rather than by the optimization algorithm itself. Algorithms which can achieve an acceptable level of convergence in relatively few function evaluations are highly desirable in such cases. \cite{Shan2010} provide a comprehensive survey of the unique challenges faced when optimizing high-dimensional, computationally intensive functions. 

Fortunately, it has been shown that many functions of interest vary primarily along a low-dimensional linear subspace of the input space. For example, efficiency and pressure ratio of turbomachinery models \citep{Seshadri2018}, merit functions of hyper-parameters of neural networks \citep{Bergstra2012}, and drag and lift coefficients of aerospace vehicles \citep{Lukaczyk2014} have all been shown to have low-dimensional structure. Functions that have this structure are known as \emph{ridge functions} \citep{Pinkus2015}, and may be written
\begin{equation}
\label{eq:example}
f(\mathbf{x}) \approx m(\mathbf{U}^T \mathbf{x}),
\end{equation}
where $f : \mathcal{D} \subseteq \mathbb{R}^n \rightarrow \mathbb{R}$, $m : \text{proj}_{\mathbf{U}}(\mathcal{D}) \subseteq \mathbb{R}^d \rightarrow \mathbb{R}$, $\mathbf{U} \in \mathbb{R}^{n \times d}$, 
$\text{proj}_{\mathbf{U}}(\mathcal{D})$ denotes the $d$-dimensional projection of domain $\mathcal{D}$ onto the subspace $\mathbf{U}$, and $d<n$. If $d \ll n$, then exploiting low-dimensional structure may lead to significant reductions in computational requirement. 

In this article, a novel DFTR method for unconstrained and bound-constrained nonlinear optimization of computationally intensive functions is presented. This algorithm is called optimization by moving ridge functions (OMoRF), as it leverages local ridge function approximations which move through the function domain. Although numerous optimization algorithms have used subspaces to reduce the problem dimension \citep{Wang2016b, Zhao2018, Kozak2019}, OMoRF differs from these approaches in a few key aspects. First, it is completely derivative-free. This differs from the variance reduced stochastic subspace descent (VRSSD) algorithm presented by \cite{Kozak2019}. Although the VRSSD algorithm does not require full gradient calculations, it still requires the computing of directional derivatives. Similarly, the Delaunay-based derivative-free optimization via global surrogates with active subspace method ($\Delta$-DOGS with ASM) proposed by \cite{Zhao2018} requires an initial sample of gradient evaluations to determine the dimension-reducing subspace. Second, the subspaces computed by OMoRF correspond to the directions of strongest variability of the objective function. In contrast, the random embeddings approach used by \cite{Wang2016b} and \cite{Cartis2020} randomly generates a dimension-reducing subspace which, in general, does not correspond to directions of high variability of the function. Finally, OMoRF does not assume a single global dimension-reducing subspace. Many similar algorithms which use ridge functions for optimization purposes \citep{Lukaczyk2014, Zhao2018, Gross2020} assume the function domain can be sufficiently described using a constant, global subspace. That is, it is assumed that the function $f$ varies primarily along a constant linear subspace $\mathbf{U}$ throughout the whole domain $\mathcal{D}$. This assumption may limit the application of ridge function modeling to a few special cases. Although adaptive sampling approaches have been previously applied for stochastic optimization with active subspaces \citep{Choromanski2019}, to the best of the authors' knowledge, OMoRF is the first model-based optimization algorithm to propose the use of locally defined ridge function models.

This article has four main contributions. First, a novel strategy for dynamically updating the subspace of a locally defined ridge during model-based optimization is proposed. Moreover, using theoretical results from interpolation and ridge approximation theory, the benefits of this approach are demonstrated. Second, the novel DFTR algorithm, OMoRF, is presented. An open source Python implementation of this algorithm has been made available for public use. Third, a novel sampling method for ridge function models which maintains two separate interpolation sets, one for ensuring accurate local subspaces and one for ensuring accurate quadratic models over those subspaces, is presented. Finally, OMoRF is applied to a variety of test problems, including a high-dimensional aerodynamic design optimization problem. 

The rest of this article is organized as follows. A brief introduction to trust region methods is provided in Section \ref{sec:TR}. In Section \ref{sec:RF}, algorithms for constructing ridge function approximations are explored. This section also includes a discussion on the full linearity of ridge function models, with this discussion motivating the use of moving ridge function models. The OMoRF algorithm and some key features of the algorithm are presented in Section \ref{sec:OMoRF}. In Section \ref{sec:results}, the algorithm is tested against other common DFO methods on a variety of test problems. Finally, a few concluding remarks are provided in Section \ref{sec:conclusion}.

\section{Trust region methods}
\label{sec:TR}

Trust region methods replace the unconstrained optimization problem 
\begin{equation}
\label{eq:unconstrainedoptimization}
\min_{\mathbf{x} \in \mathbb{R}^n} \quad f(\mathbf{x})
\end{equation}
with a sequence of trust region subproblems
\begin{equation}
\label{eq:TRsubproblem}
\begin{split}
\min_{\mathbf{s}} \quad & m_k(\mathbf{x}_k + \mathbf{s}) \\
\text{subject to} \quad & \| \mathbf{s} \| \leq \Delta_k,
\end{split}
\end{equation}
where $\mathbf{x}_k$ is the current iterate, $\Delta_k$ is the trust region radius, and $m_k$ is a simple model which approximates $f$ in the trust region 
\begin{equation}
\label{eq:trustregion}
B(\mathbf{x}_k, \Delta_k) \coloneqq \{ \mathbf{x} \in \mathbb{R}^n \mid \| \mathbf{x} - \mathbf{x}_k \| \leq \Delta_k \}.
\end{equation}
The solution to the trust region subproblem \eqref{eq:TRsubproblem} gives a step $\mathbf{s}_k$, with $\mathbf{x}_k + \mathbf{s}_k$ a candidate for the next iterate $\mathbf{x}_{k+1}$. The ratio
\begin{equation}
\label{eq:trustfactor1}
r_k = \frac{\text{actual reduction}}{\text{predicted reduction}} := \frac{f(\mathbf{x}_k) - f(\mathbf{x}_k + \mathbf{s}_k)}{m_k(\mathbf{x}_k) - m_k(\mathbf{x}_k + \mathbf{s}_k)}
\end{equation}
is used to determine if the candidate is accepted or rejected and the trust region radius increased or reduced. 

\subsection{Derivative-free trust region methods}

For standard trust region methods, a common choice of model is the Taylor series expansion centred around $\mathbf{x}_k$ 
\begin{equation}
\label{eq:TaylorModel}
m_k(\mathbf{x}_k + \mathbf{s}) = f(\mathbf{x}_k) + \nabla f(\mathbf{x}_k)^T \mathbf{s} + \frac{1}{2} \mathbf{s}^T \mathbf{B}_k \mathbf{s},
\end{equation}
where $\mathbf{B}_k $ is a symmetric matrix approximating the Hessian $\nabla^2 f(\mathbf{x}_k)$. Constructing this Taylor quadratic clearly requires knowledge of the function derivatives. Alternatively, DFTR methods may use interpolation or regression to construct $m_k$. That is, using a set of $p$ samples $\mathcal{X} = \{ \mathbf{x}^1, \mathbf{x}^2, \dots, \mathbf{x}^p \}$ and $q$ basis functions $\phi(\mathbf{x}) = \{ \phi_1(\mathbf{x}), \dots,  \phi_q(\mathbf{x}) \}$, the model is defined as 
\begin{equation}
\label{eq:model}
m_k(\mathbf{x}) = \sum_{j=1}^q \alpha_j \phi_j(\mathbf{x}),
\end{equation}
where $\alpha_j$ for $j=1, \dots, q$ are the coefficients of the model. In the case of fully-determined interpolation, the number of samples is equal to the number of coefficients, i.e. $p=q$, so the coefficients may be determined by solving the linear system
\begin{equation}
\label{eq:interpolationsystem} 
\mathbf{M}(\phi,\mathcal{X}) \boldsymbol{\alpha} = \mathbf{f},
\end{equation}
where
\small
\begin{equation}
\label{eq:Interpolation}
\mathbf{M}(\phi,\mathcal{X}) = \begin{bmatrix}
\phi_1(\mathbf{x}^1) & \phi_2(\mathbf{x}^1) & \dots & \phi_p(\mathbf{x}^1) \\
\phi_1(\mathbf{x}^2) & \phi_2(\mathbf{x}^2) & \dots & \phi_p(\mathbf{x}^2) \\
\vdots & \vdots &  \vdots & \vdots \\
\phi_1(\mathbf{x}^p) & \phi_2(\mathbf{x}^p) & \dots & \phi_p(\mathbf{x}^p) \\
\end{bmatrix},
\enskip
\boldsymbol{\alpha} = \begin{bmatrix}
\alpha_1 \\
\alpha_2 \\
\vdots \\
\alpha_p
\end{bmatrix},
\enskip \text{and} \enskip \mathbf{f} = \begin{bmatrix}
f(\mathbf{x}^1) \\
f(\mathbf{x}^2) \\
\vdots \\
f(\mathbf{x}^p)
\end{bmatrix}.
\end{equation}
\normalsize
Note that the current iterate $\mathbf{x}_k$ is generally included in the interpolation set, so $\mathbf{x}^1 = \mathbf{x}_k$.

Global convergence of trust region methods which use the Taylor quadratic \eqref{eq:TaylorModel} has been proven \citep{Conn2000}. These convergence properties rely heavily on the well-understood error bounds for Taylor series. For these same guarantees to hold for DFTR methods, one must ensure the surrogate models satisfy Taylor-like error bounds 
\begin{equation}
\label{eq:fullylinear}
\begin{aligned}
| f(\mathbf{x}) - m_k(\mathbf{x}) | & \leq \kappa_1 \Delta_k^2 \\
\| \nabla f(\mathbf{x}) - \nabla m_k(\mathbf{x}) \| & \leq \kappa_2 \Delta_k
\end{aligned}
\end{equation}
for all $\mathbf{x} \in B(\mathbf{x}_k, \Delta_k )$, where $\kappa_1, \kappa_2 > 0$ are independent of $\mathbf{x}_k$ and $\Delta_k$. Models which satisfy these conditions are known as \emph{fully linear}. A similar definition exists for \emph{fully quadratic} models, i.e. models which have similar convergence properties as a second-order Taylor series.

Satisfying certain geometric conditions on the sample set $\mathcal{X}$ \citep{Conn2008} allows one to ensure fully linear/quadratic models. In the case of polynomial surrogates, \cite{Conn2009} proved that these conditions on $\mathcal{X}$ were equivalent to bounding the condition number of the matrix
\small
\begin{equation}
\label{eq:scaled_vandermonde}
\tilde{\mathbf{M}} = \begin{bmatrix}
1 & 0 & \dots & 0 & 0 & 0 & \dots & 0 & 0 \\
1 & \tilde{x}^2_1 & \dots & \tilde{x}^2_n & \frac{1}{2}(\tilde{x}^2_1)^2 & \tilde{x}^2_1 \tilde{x}^2_2 & \dots & \frac{1}{(r-1)!} (\tilde{x}^2_{n-1})^{r-1} \tilde{x}^2_n  & \frac{1}{r!}(\tilde{x}^2_n)^r \\
\vdots & \vdots & & \vdots & \vdots & \vdots & & \vdots & \vdots \\
1 & \tilde{x}^p_1 & \dots & \tilde{x}^p_n & \frac{1}{2}(\tilde{x}^p_1)^2 & \tilde{x}^p_1 \tilde{x}^p_2 & \dots & \frac{1}{(r-1)!} (\tilde{x}^p_{n-1})^{r-1} \tilde{x}^p_n  & \frac{1}{r!}(\tilde{x}^p_n)^r \\
\end{bmatrix},
\end{equation}
\normalsize
where
$$\tilde{\mathbf{x}}^i = \frac{\mathbf{x}^i - \mathbf{x}_k}{\tilde{\Delta}} \quad \text{and} \quad \tilde{\Delta} = \max_{1 \leq i \leq p} \|\mathbf{x}^i - \mathbf{x}_k \|.$$
In particular, it was shown that, if the condition number of $\tilde{\mathbf{M}}$ is sufficiently bounded, then polynomial models constructed from $\mathcal{X} = \{ \mathbf{x}^1, \mathbf{x}^2, \dots, \mathbf{x}^p \}$ are fully linear/quadratic. Numerous algorithms for improving the geometry of $\mathcal{X}$ for polynomial interpolation are presented by \cite{Conn2009} (see Chapter 6).

\subsection{Limitations of derivative-free trust region methods}

Fully-determined polynomial interpolation of degree $r$ in $n$ dimensions requires $p={n+r \choose r}$ function evaluations. In the case of quadratic interpolation, this would require $p=\frac{1}{2}(n+1)(n+2)$ sample points. When $n$ is small, this requirement may be easily met; however, as $n$ increases, this requirement may become prohibitive, particularly in the case of functions which are expensive to evaluate. Many DFTR methods account for this computational burden by avoiding the use of fully-determined quadratic models. For instance, the optimization algorithm COBYLA \citep{Powell1994a} uses linear models, requiring only $n+1$ samples. Although this greatly reduces the effect of increased dimensionality, linear models generally do not capture the curvature of the true function, so convergence may be slow \citep{Wendor2016}. 

Some algorithms reduce the required number of samples by constructing under-determined quadratic interpolation models. This requires more samples than is necessary for linear models, but fewer than fully-determined quadratic models. For example, the algorithms NEWUOA \citep{Powell2006} and BOBYQA \citep{Powell2009} use minimum Frobenius norm interpolating quadratics, which require a constant number of more than $n+1$ points, but less than $\frac{1}{2}(n+1)(n+2)$ points. Generally, a default of $2n+1$ is used. Many other optimization algorithms \citep{Zhang2010a, Cartis2019a} have used a similar approach, and it has proven to be quite effective in practice. Nevertheless, this initial requirement may limit the efficacy of these approaches when the objective is both high-dimensional and computationally expensive.

Alternatively, other DFTR algorithms seek to reduce the initial start-up cost by using very few points initially, but increasing the number of points as more become available. For example, the DFO-TR algorithm proposed by \cite{Bandeira2012} builds quadratic models using significantly fewer than $\frac{1}{2}(n+1)(n+2)$ points, possibly as few as $n+1$ points. Although this was done using minimum Frobenius norm models, as in NEWUOA and BOBYQA, they also showed that, by assuming approximate Hessian sparsity, one could also use sparsity recovery techniques, such as compressed sensing \citep{Eldar2012}. Furthermore, it was proven that such models are \emph{probabilistically fully quadratic}. That is, these models satisfy second-order Taylor-like error bounds with a probability bounded below by a term dependent on the number of sample points used. Moreover, the convergence of DFTR methods which employ probabilistically fully linear/quadratic models was proved by \cite{Bandeira2014}, provided the models satisfied the Taylor-like error bounds with probability greater than or equal to $\frac{1}{2}$.

\section{Ridge function approximations}
\label{sec:RF}

Ridge function approximations allow one to reduce the effective dimensionality of a function by determining a low-dimensional representation which is a function of a few linear combinations of the high-dimensional input. These approximations can be determined using a number of methods \citep{Constantine2015, Diez2015, Hokanson2017}. This article will focus on two approaches: 1) derivative-free active subspaces, and 2) polynomial ridge approximation.

\subsection{Active subspaces}

The active subspace of a given function $f(\mathbf{x})$ has been defined by \cite{Constantine2014} as the $d$-dimensional subspace $\mathbf{U} \in \mathbb{R}^{n \times d}$ of the inputs $\mathbf{x}\in \mathbb{R}^n$ which corresponds to the directions of strongest variability of $f$. To see how one may discover $\mathbf{U}$, consider a probability density function $\rho(\mathbf{x})$ which is strictly positive on the domain of interest and assume that 
\begin{equation}
\label{eq:rho_assumptions}
\int \mathbf{x} \rho(\mathbf{x}) d \mathbf{x} = 0 \qquad \text{and} \qquad \int \mathbf{x} \mathbf{x}^T \rho(\mathbf{x}) d \mathbf{x} = \mathbf{I}
\end{equation}
where $\mathbf{I}$ is the $n \times n$ identity matrix. Provided $\int \mathbf{x} \mathbf{x}^T \rho(\mathbf{x}) d \mathbf{x}$ is full rank, these assumptions are easily satisfied by a change of variables \citep{Constantine2017a}. Typically, $\rho(\mathbf{x})$ is taken to be Gaussian for unbounded inputs $\mathbf{x}$, with each coordinate scaled and shifted to be of mean 0 and standard deviation 1. When $\mathbf{x}$ is bounded below and above, $\rho(\mathbf{x})$ is generally taken to be the uniform distribution with $\mathbf{x}$ scaled and shifted to lie between $[-1, 1]^n$. 

Given $f$ and its partial derivatives are square integrable with respect to $\rho(\mathbf{x})$, the active subspace of $f$ can be found using the covariance matrix
\begin{equation}
\label{eq:covariance}
\mathbf{C} = \int (\nabla f(\mathbf{x})) (\nabla f(\mathbf{x}))^T \rho(\mathbf{x}) \enskip d \mathbf{x}.
\end{equation}
In practice, this covariance matrix is approximated by
\begin{equation}
\label{eq:covariance_MC}
\mathbf{C} \approx \frac{1}{M} \sum_{i=1}^{M} (\nabla f(\mathbf{x}_i)) (\nabla f(\mathbf{x}_i))^T 
\end{equation}
where $\mathbf{x}_i$ are drawn randomly from $\rho(\mathbf{x})$. This matrix is symmetric, positive semidefinite, so its real eigendecomposition is given by
\begin{equation}
\label{eq:eigendecomposition}
\mathbf{C} = \mathbf{W} \mathbf{\Lambda} \mathbf{W}^T,
\end{equation}
where $\mathbf{\Lambda} = \text{diag}(\lambda_1, \dots, \lambda_d, \dots,  \lambda_n)$ and $\lambda_1 \geq \dots \geq \lambda_d \geq \dots \lambda_n \geq 0$. Partitioning $\mathbf{W}$ and $\mathbf{\Lambda}$ as
\begin{equation}
\label{eq:partition}
\mathbf{W} = \begin{bmatrix}
\mathbf{U}& \mathbf{V}
\end{bmatrix}, \quad
\mathbf{\Lambda} = \begin{bmatrix}
\mathbf{\Lambda}_1 & \mathbf{0}\\
\mathbf{0} & \mathbf{\Lambda}_2
\end{bmatrix}
\end{equation}
results in the active subspace $\mathbf{U} \in \mathbb{R}^{n \times d}$ and the inactive subspace $\mathbf{V} \in \mathbb{R}^{n \times (n-d)}$. The reduced coordinates $\mathbf{y} = \mathbf{U}^T \mathbf{x}$ and $\mathbf{z} = \mathbf{V}^T \mathbf{x}$ are known as the active and inactive variables, respectively. The following lemma quantifies the variation of $f$ along these coordinates.
\begin{lemma}
\label{lemma:variation}
The mean-squared gradients of $f$ with respect to the coordinates $\mathbf{y}$ and $\mathbf{z}$ satisfy
\begin{equation}
\begin{aligned}
\mathbb{E} \left[ (\nabla_{\mathbf{y}} f)^T (\nabla_{\mathbf{y}} f) \right] & = \lambda_1 + \dots + \lambda_d, \\
\mathbb{E} \left[ (\nabla_{\mathbf{z}} f)^T (\nabla_{\mathbf{z}} f) \right] & = \lambda_{d+1} + \dots + \lambda_n.
\end{aligned}
\end{equation}
\end{lemma}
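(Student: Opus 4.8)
The plan is to reduce both identities to a single trace computation involving the covariance matrix $\mathbf{C}$ from \eqref{eq:covariance} and then read off the answer from its eigendecomposition \eqref{eq:eigendecomposition}--\eqref{eq:partition}. First I would fix the change of variables: since $\mathbf{W} = [\,\mathbf{U}\ \ \mathbf{V}\,]$ is orthogonal, every $\mathbf{x}$ decomposes as $\mathbf{x} = \mathbf{U}\mathbf{y} + \mathbf{V}\mathbf{z}$ with $\mathbf{y} = \mathbf{U}^T\mathbf{x}$ and $\mathbf{z} = \mathbf{V}^T\mathbf{x}$, so $f$ may be regarded as a function of $(\mathbf{y},\mathbf{z})$ through $g(\mathbf{y},\mathbf{z}) \coloneqq f(\mathbf{U}\mathbf{y} + \mathbf{V}\mathbf{z})$. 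The chain rule then gives $\nabla_{\mathbf{y}} f = \mathbf{U}^T \nabla f(\mathbf{x})$ and $\nabla_{\mathbf{z}} f = \mathbf{V}^T \nabla f(\mathbf{x})$, where $\nabla f$ denotes the gradient in the original coordinates. The assumption that $f$ and its partial derivatives are square integrable with respect to $\rho$ guarantees that every expectation below is finite.

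Next I would rewrite the quadratic form as a trace: $(\nabla_{\mathbf{y}} f)^T(\nabla_{\mathbf{y}} f) = (\nabla f)^T \mathbf{U}\mathbf{U}^T \nabla f = \mathrm{trace}\!\big(\mathbf{U}^T (\nabla f)(\nabla f)^T \mathbf{U}\big)$. Taking expectations and using linearity of expectation together with the cyclic property of the trace yields $\mathbb{E}\!\left[(\nabla_{\mathbf{y}} f)^T(\nabla_{\mathbf{y}} f)\right] = \mathrm{trace}\!\big(\mathbf{U}^T \mathbf{C}\, \mathbf{U}\big)$, where $\mathbf{C}$ is exactly the matrix defined in \eqref{eq:covariance}. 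Substituting the eigendecomposition $\mathbf{C} = \mathbf{W}\mathbf{\Lambda}\mathbf{W}^T$ and noting that, by the orthonormality of the columns of $\mathbf{W}$ and the partition \eqref{eq:partition}, the product $\mathbf{W}^T\mathbf{U}$ is the $n\times d$ matrix whose top $d\times d$ block is the identity and whose bottom block is zero, one obtains $\mathbf{U}^T\mathbf{C}\,\mathbf{U} = \mathbf{\Lambda}_1 = \mathrm{diag}(\lambda_1,\dots,\lambda_d)$. Its trace is $\lambda_1 + \dots + \lambda_d$, which is the first claimed identity. The second identity follows by repeating the argument verbatim with $\mathbf{U}$ replaced by $\mathbf{V}$ and $\mathbf{\Lambda}_1$ by $\mathbf{\Lambda}_2$.

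I do not expect a genuine obstacle in this lemma; the only points that need a word of care are the justification of the chain rule under the change of variables, which is clean because $\mathbf{W}$ is a fixed orthogonal matrix, and the bookkeeping showing that the off-diagonal blocks of $\mathbf{W}^T\mathbf{U}$ vanish so that the term involving $\mathbf{\Lambda}_2$ drops out of $\mathbf{U}^T\mathbf{C}\,\mathbf{U}$ (and symmetrically for $\mathbf{V}$). Everything else reduces to linearity of expectation and elementary properties of the trace.
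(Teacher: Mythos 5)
Your proof is correct and is essentially the argument the paper relies on: the paper simply cites Lemma 2.2 of Constantine (2015), whose proof is exactly this chain-rule-plus-trace computation, writing $\mathbb{E}[(\nabla_{\mathbf{y}} f)^T(\nabla_{\mathbf{y}} f)] = \mathrm{trace}(\mathbf{U}^T\mathbf{C}\,\mathbf{U}) = \mathrm{trace}(\mathbf{\Lambda}_1)$ and symmetrically for $\mathbf{V}$. No issues; the block bookkeeping for $\mathbf{W}^T\mathbf{U}$ is exactly as you describe.
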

\begin{proof}
See the proof of Lemma 2.2 by \cite{Constantine2015}.
\end{proof}
From Lemma \ref{lemma:variation} it is clear that on average $f$ shows greater variability along $\mathbf{y}$ than $\mathbf{z}$. Moreover, the sum of the partitioned eigenvalues $\mathbf{\Lambda}_1$ and $\mathbf{\Lambda}_2$ quantifies this variation. This motivates the well-known heuristic of choosing the reduced dimension $d$ as the index with the greatest log decay of eigenvalues \citep{Constantine2015}. 

In the derivative-free context, the approximate covariance matrix
\begin{equation}
\label{eq:approximate_covariance}
\hat{\mathbf{C}} = \int (\nabla \hat{f}(\mathbf{x})) (\nabla \hat{f}(\mathbf{x}))^T \rho(\mathbf{x}) \enskip d \mathbf{x},
\end{equation}
where $\hat{f}$ is a surrogate model of $f$, may be used as a surrogate for $\mathbf{C}$. The efficacy of this approach is clearly dependent on the accuracy of the inferred gradients. To provide a theoretical guarantee of this statement, assume that 
\begin{equation}
\label{eq:inferred_gradients_assumption}
\| \nabla \hat{f}(\mathbf{x}) - \nabla f(\mathbf{x}) \| \leq \omega_h
\end{equation}
for all $\mathbf{x} \in B$ for some domain $B$ with $\omega_h$ independent of $\mathbf{x}$ and 
\begin{equation*}
\lim_{h \rightarrow 0} \omega_h = 0,
\end{equation*}
where $\hat{f}$ is a surrogate model for $f$ and $h$ is some controllable parameter. Note, if $\hat{f}$ is fully linear, then by definition,
$$\| \nabla f(\mathbf{x}) - \nabla \hat{f}(\mathbf{x}) \| \leq \kappa_2 \Delta_k,$$
implying that fully linear models inherently satisfy this assumption as $\Delta_k \rightarrow 0$, i.e. as the trust region radius shrinks. Given this assumption, the following lemma (modified from Lemma 3.11 in \cite{Constantine2015}) provides an error bound between the approximate covariance matrix $\hat{\mathbf{C}}$ \eqref{eq:approximate_covariance} and the true covariance matrix $\mathbf{C}$ \eqref{eq:covariance}.
\begin{lemma}
\label{lemma:approxAS}
Assume $\nabla f(\mathbf{x})$ is Lipschitz continuous with Lipschitz constant $\gamma_f$. The norm of the difference between $\mathbf{C}$ and $\hat{\mathbf{C}}$
 is bounded by
\begin{equation}
\label{eq:SubspaceBound}
\| \mathbf{C} - \hat{\mathbf{C}} \| \leq (\omega_h + 2 \gamma_f) \omega_h.
\end{equation}
\end{lemma}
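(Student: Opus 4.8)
The plan is to reduce the matrix inequality \eqref{eq:SubspaceBound} to a pointwise bound on the integrand and then integrate. Writing $\mathbf{g} = \nabla f$ and $\hat{\mathbf{g}} = \nabla \hat{f}$, subtracting \eqref{eq:approximate_covariance} from \eqref{eq:covariance} expresses the difference as a single integral,
\begin{equation*}
\mathbf{C} - \hat{\mathbf{C}} = \int \big( \mathbf{g}(\mathbf{x}) \mathbf{g}(\mathbf{x})^T - \hat{\mathbf{g}}(\mathbf{x}) \hat{\mathbf{g}}(\mathbf{x})^T \big) \rho(\mathbf{x}) \, d\mathbf{x},
\end{equation*}
so, since $\rho \geq 0$ integrates to $1$, the triangle inequality for (matrix-valued) integrals gives $\| \mathbf{C} - \hat{\mathbf{C}} \| \leq \int \big\| \mathbf{g}\mathbf{g}^T - \hat{\mathbf{g}}\hat{\mathbf{g}}^T \big\| \, \rho \, d\mathbf{x}$. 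It therefore suffices to control the scalar integrand for every $\mathbf{x}$ in the region $B$ on which the gradient-error assumption \eqref{eq:inferred_gradients_assumption} holds; I would take $\rho$ to be supported in $B$ (the natural situation in the scaled, bound-constrained setting) and record this restriction explicitly otherwise.

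The crux is the elementary outer-product identity $\mathbf{a}\mathbf{a}^T - \mathbf{b}\mathbf{b}^T = \mathbf{a}(\mathbf{a} - \mathbf{b})^T + (\mathbf{a} - \mathbf{b})\mathbf{b}^T$. Taking spectral norms, using $\| \mathbf{u}\mathbf{v}^T \| = \| \mathbf{u} \|\,\| \mathbf{v} \|$ and the triangle inequality, and then substituting $\mathbf{a} = \mathbf{g}(\mathbf{x})$, $\mathbf{b} = \hat{\mathbf{g}}(\mathbf{x})$, I get
\begin{equation*}
\big\| \mathbf{g}\mathbf{g}^T - \hat{\mathbf{g}}\hat{\mathbf{g}}^T \big\| \leq \big( \| \mathbf{g}(\mathbf{x}) \| + \| \hat{\mathbf{g}}(\mathbf{x}) \| \big) \, \| \mathbf{g}(\mathbf{x}) - \hat{\mathbf{g}}(\mathbf{x}) \| \leq \omega_h \big( 2 \| \nabla f(\mathbf{x}) \| + \omega_h \big),
\end{equation*}
the last inequality using \eqref{eq:inferred_gradients_assumption} twice: directly for $\| \mathbf{g} - \hat{\mathbf{g}} \| \leq \omega_h$, and via $\| \hat{\mathbf{g}}(\mathbf{x}) \| \leq \| \mathbf{g}(\mathbf{x}) \| + \omega_h$. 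Integrating against $\rho$ then yields $\| \mathbf{C} - \hat{\mathbf{C}} \| \leq \omega_h \big( 2 \int \| \nabla f(\mathbf{x}) \| \rho(\mathbf{x}) \, d\mathbf{x} + \omega_h \big)$, and it only remains to bound the average gradient magnitude by $\gamma_f$. For this I would apply Cauchy--Schwarz, $\int \| \nabla f \| \rho \, d\mathbf{x} \leq \big( \int \| \nabla f \|^2 \rho \, d\mathbf{x} \big)^{1/2} = \big( \mathrm{trace}\, \mathbf{C} \big)^{1/2}$, and then bound $\mathrm{trace}\, \mathbf{C}$ in terms of $\gamma_f$, using the Lipschitz continuity of $\nabla f$ together with the centring and whitening of $\rho$ from \eqref{eq:rho_assumptions} (expressing $\nabla f(\mathbf{x})$ as an increment from a reference point and controlling that increment by $\gamma_f$ times a displacement whose $\rho$-moments are normalized).

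I expect this final step --- passing from $\int \| \nabla f \| \rho \, d\mathbf{x}$ to $\gamma_f$ --- to be the only genuinely non-routine part: it is the sole place the Lipschitz hypothesis is used, and it hinges on exactly how the normalization \eqref{eq:rho_assumptions} of $\rho$ is exploited (and on $\rho$ being supported where \eqref{eq:inferred_gradients_assumption} holds). Everything before it is the outer-product identity plus submultiplicativity of the spectral norm, and the structure of the resulting bound $\omega_h(\omega_h + 2\gamma_f)$ --- linear in the gradient error $\omega_h$ to leading order, with a quadratic correction --- is exactly what \eqref{eq:SubspaceBound} asserts.
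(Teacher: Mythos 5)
Your opening moves are sound and essentially parallel the paper's: you pass the norm inside the integral (using $\rho\ge 0$, $\int\rho=1$) and reduce to a pointwise bound on $\|\mathbf{g}\mathbf{g}^T-\hat{\mathbf{g}}\hat{\mathbf{g}}^T\|$. Your decomposition $\mathbf{a}\mathbf{a}^T-\mathbf{b}\mathbf{b}^T=\mathbf{a}(\mathbf{a}-\mathbf{b})^T+(\mathbf{a}-\mathbf{b})\mathbf{b}^T$ differs only cosmetically from the paper's symmetrized identity $\frac{1}{2}\bigl[(\hat{\mathbf{g}}+\mathbf{g})(\hat{\mathbf{g}}-\mathbf{g})^T+(\hat{\mathbf{g}}-\mathbf{g})(\hat{\mathbf{g}}+\mathbf{g})^T\bigr]$; both yield $\|\mathbf{g}\mathbf{g}^T-\hat{\mathbf{g}}\hat{\mathbf{g}}^T\|\le(\|\mathbf{g}\|+\|\hat{\mathbf{g}}\|)\|\mathbf{g}-\hat{\mathbf{g}}\|\le(2\|\nabla f\|+\omega_h)\,\omega_h$. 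Up to this point there is no real divergence.

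The gap is in the step you yourself flag as the crux. Your proposed route --- Cauchy--Schwarz to get $\int\|\nabla f\|\rho\,d\mathbf{x}\le(\mathrm{trace}\,\mathbf{C})^{1/2}$, then bounding $\mathrm{trace}\,\mathbf{C}$ by $\gamma_f^2$ using the Lipschitz continuity of $\nabla f$ and the normalization \eqref{eq:rho_assumptions} --- cannot be completed. The Lipschitz constant of $\nabla f$ controls only the \emph{variation} of the gradient, never its magnitude: writing $\nabla f(\mathbf{x})=\nabla f(\mathbf{x}_0)+(\nabla f(\mathbf{x})-\nabla f(\mathbf{x}_0))$ leaves the term $\|\nabla f(\mathbf{x}_0)\|$, which no moment condition on $\rho$ can absorb. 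Concretely, for $f(\mathbf{x})=M\mathbf{c}^T\mathbf{x}$ the gradient is constant, so its Lipschitz constant is $0$, yet $\int\|\nabla f\|\rho\,d\mathbf{x}=M\|\mathbf{c}\|$ is arbitrarily large. The step that actually closes the argument --- and what the paper does --- is to use $\|\nabla f(\mathbf{x})\|\le\gamma_f$ \emph{pointwise}; i.e., $\gamma_f$ is tacitly a uniform bound on the gradient norm (equivalently, a Lipschitz constant for $f$ itself, as in Lemma 3.11 of Constantine), not a Lipschitz constant for $\nabla f$ as the hypothesis literally reads. With that reading your inequality $(2\|\nabla f\|+\omega_h)\omega_h\le(2\gamma_f+\omega_h)\omega_h$ holds at every $\mathbf{x}$ and the integration finishes the proof with no trace computation; without it, the stated bound is false, so no proof strategy can succeed.
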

\begin{proof}
See Appendix \ref{app:proof1}.
\end{proof}

Provided $\nabla \hat{f}(\mathbf{x})$ is easily computed, one may be able to compute an analytic form for $\hat{\mathbf{C}}$. Two model-based heuristics for approximating active subspaces using $\hat{\mathbf{C}}$, one with $\hat{f}$ a quadratic model and one a linear model, were proposed by \cite{Constantine2017a} (see Algorithms 1 and 2). In the case of a quadratic model
\begin{equation}
\hat{f}(\mathbf{x}) = c + \mathbf{b}^T \mathbf{x} + \frac{1}{2} \mathbf{x}^T \mathbf{A} \mathbf{x},
\end{equation}
using the assumptions on $\rho(\mathbf{x})$ from \eqref{eq:rho_assumptions}, the approximate covariance matrix \eqref{eq:approximate_covariance} becomes
\begin{equation}
\label{eq:global_quadratic_covariance}
\hat{\mathbf{C}} = \mathbf{b} \mathbf{b}^T + \frac{2}{3} \mathbf{A}^2.
\end{equation}
The active subspace $\mathbf{U}$ can then be approximated by partitioning the eigenvectors of $\hat{\mathbf{C}}$ by the log decay of its eigenvalues, as before. In the case of a linear model
\begin{equation}
\label{eq:global_linear}
\hat{f}(\mathbf{x}) = c + \mathbf{b}^T \mathbf{x},
\end{equation}
the approximate covariance matrix \eqref{eq:approximate_covariance} becomes
\begin{equation}
\label{eq:global_linear_covariance}
\hat{\mathbf{C}} = \mathbf{b} \mathbf{b}^T,
\end{equation}
so the active subspace may be approximated by the 1-dimensional vector
\begin{equation}
\label{eq:global_linear_AS}
\mathbf{U} \approx \frac{\mathbf{b}}{\| \mathbf{b} \|}.
\end{equation}

These model-based heuristics have been successfully applied to a number of real-life applications. For instance, active subspaces for models of lithium ion batteries \citep{Constantine2017a} and turbomachinery models \citep{Seshadri2018} have been approximated in this way. Moreover, this approach offers significant benefits over other subspace-based dimension reduction strategies. First, it is completely derivative-free. This allows it to be easily incorporated with other DFTR methods to reduce the computational overhead of surrogate modeling. Second, provided the inferred gradients satisfy the assumption \eqref{eq:inferred_gradients_assumption}, the error between the approximate covariance matrix \eqref{eq:approximate_covariance} and the true covariance matrix \eqref{eq:covariance} may be bounded using Lemma \ref{lemma:approxAS}. Finally, calculating the approximate 1-dimensional subspace \eqref{eq:global_linear_AS} is very inexpensive compared to other methods, requiring only $n+1$ samples to approximate the active subspace. However, one major limitation of this approach is that \eqref{eq:global_linear_AS} can only be used to approximate a 1-dimensional active subspace. When the function of interest is not sufficiently described using a single direction, this approach may fail. Although the global quadratic covariance matrix \eqref{eq:global_quadratic_covariance} has been effectively used to calculate higher dimensional subspaces \citep{Seshadri2018}, it also requires a significant overhead in function evaluations. This limitation motivates the use of an alternative method of ridge function approximation in the case where higher dimensional subspaces are desired.

\subsection{Polynomial ridge approximation}
\label{sec:VP}

Unlike the active subspaces approach, ridge function recovery allows one to find the subspace $\mathbf{U}$ and the coefficients $\boldsymbol{\alpha}$ \eqref{eq:model} of a ridge function $m(\mathbf{U}^T \mathbf{x})$ simultaneously. \cite{Hokanson2017} developed a method of doing this for the case in which $m$ is a polynomial of dimension $d$ and degree $r$. Their approach was to use variable projection to solve the minimization problem
\begin{equation}
\label{eq:norm_misfit}
\min_{\substack{m \in \mathbb{P}^r (\mathbb{R}^d) \\ \mathbf{U} \in \mathbb{G}(d, \mathbb{R}^n)}} \sum_{i=1}^M \left[ f(\mathbf{x}^i) - m(\mathbf{U}^T \mathbf{x}^i) \right]^2,
\end{equation}
where $\mathbb{P}^r (\mathbb{R}^d)$ denotes the set of polynomials on $\mathbb{R}^d$ of degree $r$, $\mathbb{G}(d, \mathbb{R}^n)$ denotes the Grassmann manifold of $d$-dimensional subspaces of $\mathbb{R}^n$, and $\{ \mathbf{x}^i \}$ is a set of $M$ samples. Writing $m(\mathbf{U}^T \mathbf{x}^i)$ as 
$$m(\mathbf{U}^T \mathbf{x}^i) = \mathbf{M}(\phi, \mathcal{Y}) \boldsymbol{\alpha}$$ 
(as seen in \eqref{eq:interpolationsystem}), where $\mathcal{Y} = \{  \mathbf{U}^T \mathbf{x}^i \mid i=1,\dots,M \}$, allows one to formulate \eqref{eq:norm_misfit} as a nonlinear least squares problem in terms of the coefficients $\boldsymbol{\alpha}$ and the subspace $\mathbf{U}$
\begin{equation}
\label{eq:nonlinearlq}
\min_{\substack{\boldsymbol{\alpha} \in \mathbb{R}^q \\ \mathbf{U} \in \mathbb{G}(d, \mathbb{R}^n)}} \| \mathbf{f} - \mathbf{M}(\phi, \mathcal{Y}) \boldsymbol{\alpha} \|_2^2,
\end{equation}
with $\mathbf{f} \in \mathbb{R}^M$ such that $f_i = f(\mathbf{x}^i)$ and $q = {d+r \choose r}$. Using the fact that $\boldsymbol{\alpha}$ may be easily discovered using the Moore-Penrose pseudoinverse, one may write \eqref{eq:nonlinearlq} as the Grassmann manifold optimization problem
\begin{equation}
\label{eq:nonlinearslq}
\min_{\mathbf{U} \in \mathbb{G}(d, \mathbb{R}^n)} \| \mathbf{f} - \mathbf{M}(\phi, \mathcal{Y}) \mathbf{M}(\phi, \mathcal{Y})^{\dagger} \mathbf{f} \|_2^2,
\end{equation}
over strictly $\mathbf{U}$. To solve this problem, \cite{Hokanson2017} developed a novel Grassmann Gauss-Newton method for iteratively solving \eqref{eq:nonlinearslq}. 

\subsection{Fully linear ridge function models}
\label{sec:fully_linear_RFs}

Global convergence of DFTR methods relies on models which are fully linear, i.e. models which satisfy the bounds \eqref{eq:fullylinear}. Demonstrating full linearity of ridge function models $m(\mathbf{U}^T \mathbf{x})$ requires
\begin{equation}
\label{eq:fullylinear1}
\begin{aligned}
| f(\mathbf{x}) - m(\mathbf{U}^T \mathbf{x}) | & \leq \kappa_1 \Delta^2 \\
\| \nabla f(\mathbf{x}) - \nabla m(\mathbf{U}^T \mathbf{x}) \| & \leq \kappa_2 \Delta
\end{aligned}
\end{equation}
for all $\mathbf{x} \in B(\mathbf{x}_k, \Delta)$ \eqref{eq:trustregion}, where $\kappa_1, \kappa_2$ are constants which are independent of $\mathbf{x}$ and $\Delta$. For standard polynomial interpolation models, one may use well poisedness of the interpolation set to prove full linearity (as shown in \cite{Conn2008}). However, in the case of polynomial ridge functions, there is an extra level of complexity involved as the models are constructed over a projected input space. Unless $f$ has an exact ridge function representation with known effective dimension, projection of its domain  onto the subspace $\mathbf{U}$ will have some inherent information loss associated with it. This is because, for each value of the reduced coordinate $\mathbf{y}$, there exist many (possibly infinitely many) coordinates in the full space which map to it. Variations in the function values associated with each full space coordinate may show up as `noise' in the $d$-dimensional projection of the function domain. This observation leads to two forms of error: 1) information loss from dimension reduction, and 2) response surface error which arises from polynomial interpolation with samples which are corrupted by noise.

To formalize these two forms of error, consider the conditional expectation of $f$ given $\mathbf{y} = \mathbf{U}^T \mathbf{x}$.
\begin{definition}
Let $f(\mathbf{x})$ be square-integrable with respect to a probability density function $\rho(\mathbf{x})$, $\mathbf{U} \in \mathbb{R}^{n \times d}$ be a subspace with orthogonal columns and $\mathbf{V} \in \mathbb{R}^{n \times (n-d)}$ be an orthogonal basis for the complement of the span of the columns of $\mathbf{U}$. The conditional expectation of $f$ given $\mathbf{y} = \mathbf{U}^T \mathbf{x}$ is defined as
\begin{equation}
\label{eq:conditional_expectation}
g(\mathbf{y}) = \mathbb{E} \left[ f \mid \mathbf{y} \right] = \int_{\mathbf{z}} f(\mathbf{U}\mathbf{y} + \mathbf{V}\mathbf{z}) \pi(\mathbf{z} \mid \mathbf{y}) d\mathbf{z},
\end{equation}
where $\mathbf{z} = \mathbf{V}^T \mathbf{x}$ and $\pi(\mathbf{z} \mid \mathbf{y})$ is the conditional density
$$\pi(\mathbf{z} \mid \mathbf{y}) = \frac{\rho(\mathbf{U}\mathbf{y} + \mathbf{V}\mathbf{z})}{\int \rho(\mathbf{U}\mathbf{y} + \mathbf{V}\mathbf{z}) d\mathbf{z}}.$$
\end{definition}
That is, the conditional expectation of $f$ given $\mathbf{y}$ is the average value of $f(\mathbf{x})$ for all possible values $\mathbf{x}$ for a given reduced coordinate $\mathbf{y} = \mathbf{U}^T \mathbf{x}$. The function $g(\mathbf{y})$ is the unique, optimal ridge function approximation in the $L^2(\rho)$ norm for a given subspace $\mathbf{U}$ (see the proof of Theorem 8.3 in \cite{Pinkus2015}). 

The error functions
\begin{equation}
\label{eq:bounds_2}
e^g (\mathbf{x}) = f(\mathbf{x}) - g(\mathbf{U}^T \mathbf{x}), \qquad \mathbf{e}^{g} (\mathbf{x}) = \nabla f(\mathbf{x}) - \nabla g(\mathbf{U}^T \mathbf{x}) 
\end{equation}
represent the information loss from reducing onto the subspace $\mathbf{U}$. If one can appropriately bound 
$| e^g (\mathbf{x}) |$ and $\| \mathbf{e}^{g} (\mathbf{x}) \|$ for all $\mathbf{x} \in B(\mathbf{x}_k, \Delta)$, one can show full linearity of the conditional expectation $g$ \eqref{eq:conditional_expectation}. Although the mean-squared error of $g$ with respect to $f$ is known to be bounded in expectation (see Theorem 3.1 by \cite{Constantine2014}), there exists no formal error analysis for bounding the error functions \eqref{eq:bounds_2} for any $f$ and $\mathbf{U}$. However, in the special case where $f$ has no dependence on $\mathbf{z}$, these error bounds are satisfied. Such functions are known as \emph{$\mathbf{z}$-invariant}. Using the following proposition proposed by \cite{Constantine2015}, full linearity of $g$ is trivially shown in the case where $f$ is $\mathbf{z}$-invariant.
\begin{proposition}
\label{prop:z_invariant}
Let $f$ be $\mathbf{z}$-invariant. Then, for any two points $\mathbf{x}^1, \mathbf{x}^2$ which lie in the domain of $f$ and satisfy $\mathbf{y} =\mathbf{U}^T \mathbf{x}^1 = \mathbf{U}^T\mathbf{x}^2$, 
$$f(\mathbf{x}^1) = f(\mathbf{x}^2) \qquad \text{and} \qquad \nabla f(\mathbf{x}^1) = \nabla f(\mathbf{x}^2).$$
\end{proposition}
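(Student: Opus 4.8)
The plan is to unpack the definition of $\mathbf{z}$-invariance and reduce everything to the elementary observation that $f$ is then a genuine function of the reduced coordinate $\mathbf{y} = \mathbf{U}^T\mathbf{x}$ alone. Since the columns of $\mathbf{U}$ and $\mathbf{V}$ together form an orthonormal basis of $\mathbb{R}^n$, the map $\mathbf{x} \mapsto (\mathbf{y},\mathbf{z}) = (\mathbf{U}^T\mathbf{x}, \mathbf{V}^T\mathbf{x})$ is an orthogonal change of variables with inverse $\mathbf{x} = \mathbf{U}\mathbf{y} + \mathbf{V}\mathbf{z}$. First I would use this to identify $f(\mathbf{x})$ with a function $F(\mathbf{y},\mathbf{z})$; the hypothesis that $f$ is $\mathbf{z}$-invariant is precisely the statement that $F$ does not depend on its second argument, so $F(\mathbf{y},\mathbf{z}) = g(\mathbf{y})$ for a well-defined $g$ (this is just the conditional expectation \eqref{eq:conditional_expectation}, which collapses to $f$ itself in this case).

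Given this, the first assertion is immediate: if $\mathbf{x}^1,\mathbf{x}^2$ lie in the domain of $f$ with $\mathbf{U}^T\mathbf{x}^1 = \mathbf{U}^T\mathbf{x}^2 = \mathbf{y}$, write $\mathbf{z}^i = \mathbf{V}^T\mathbf{x}^i$ so that $\mathbf{x}^i = \mathbf{U}\mathbf{y} + \mathbf{V}\mathbf{z}^i$; then $f(\mathbf{x}^i) = F(\mathbf{y},\mathbf{z}^i) = g(\mathbf{y})$ independently of $i$. For the gradient assertion I would apply the chain rule to the orthogonal change of variables: $\nabla f(\mathbf{x}) = \mathbf{U}\mathbf{U}^T\nabla f(\mathbf{x}) + \mathbf{V}\mathbf{V}^T\nabla f(\mathbf{x}) = \mathbf{U}\,\nabla_{\mathbf{y}} f + \mathbf{V}\,\nabla_{\mathbf{z}} f$, where $\nabla_{\mathbf{y}} f$ and $\nabla_{\mathbf{z}} f$ denote the partial gradients in the new coordinates. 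Because $F$ is constant in $\mathbf{z}$ we have $\nabla_{\mathbf{z}} f \equiv \mathbf{0}$, while $\nabla_{\mathbf{y}} f = \nabla g(\mathbf{y})$ depends on $\mathbf{x}$ only through $\mathbf{y}$. Evaluating at $\mathbf{x}^1$ and $\mathbf{x}^2$, which share the same $\mathbf{y}$, gives $\nabla f(\mathbf{x}^1) = \mathbf{U}\nabla g(\mathbf{y}) = \nabla f(\mathbf{x}^2)$.

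I do not expect a substantive obstacle: the content is entirely the change-of-variables bookkeeping above. The only points needing care are (i) that both $\mathbf{x}^1$ and $\mathbf{x}^2$ genuinely belong to the domain of $f$, so that $F$, and hence $g$, is defined at the arguments in question --- but this is part of the hypothesis --- and (ii) that $f$ is differentiable wherever the gradient statement is invoked, so that the chain-rule decomposition of $\nabla f$ is valid; if one wishes to avoid any smoothness assumption, the gradient claim should be read as holding at every point where $\nabla f$ exists, and the same coordinate computation still applies.
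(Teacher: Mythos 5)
Your argument is correct, and the paper does not supply its own proof here --- it simply defers to Proposition~2.3 of Constantine (2015), whose argument is the same elementary observation you make: two points with $\mathbf{U}^T\mathbf{x}^1=\mathbf{U}^T\mathbf{x}^2$ differ by an element of the range of $\mathbf{V}$, so $\mathbf{z}$-invariance forces equal function values, and the gradient identity follows from $\nabla f = \mathbf{U}\nabla_{\mathbf{y}}f + \mathbf{V}\nabla_{\mathbf{z}}f$ with $\nabla_{\mathbf{z}}f\equiv\mathbf{0}$. Your cautions about membership in the domain and differentiability are sensible but do not change the substance; this is essentially the reference's proof written out.
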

\begin{proof}
See the proof of Proposition 2.3 by \cite{Constantine2015}.
\end{proof} 

Unfortunately, using $g(\mathbf{y})$ as a surrogate model is not practical, as it requires high-dimensional integration along the $\mathbf{z}$ coordinate. Instead, a ridge function approximation $m(\mathbf{y})$ which acts as a surrogate to $g$ is used. This introduces the error functions
\begin{equation}
\label{eq:bounds_1}
e^{m} (\mathbf{y}) = g(\mathbf{y}) - m(\mathbf{y}), \qquad \mathbf{e}^{m} (\mathbf{y}) = \nabla g(\mathbf{y}) - \nabla m(\mathbf{y}),
\end{equation}
which represent the response surface error. Although $g$ may be smooth and differentiable with respect to reduced coordinates $\mathbf{y}$, the $d$-dimensional samples used to construct $m$ will be noisy as they will be obtained from the $n$-dimensional function $f$. \cite{Kannan2012} provide theoretical guarantees for quadratic models constructed from noisy functions (see Theorem 2.2). Using similar logic, the following theorem provides error bounds for $| e^{m} (\mathbf{y}) |$ and $\| \mathbf{e}^{m} (\mathbf{y}) \|$.
\begin{theorem}
\label{thm:Kannan}
Suppose $g$ is continuously differentiable, $\nabla g$ is Lipschitz continuous with Lipschitz constant $\gamma_g$ in the trust region $B$ (where $B$ denotes $B(\mathbf{x}_k, \Delta)$), and that $\mathcal{X} = \{ \mathbf{x}_k, \dots, \mathbf{x}^q \} \subset B$ contains at least $d+1$ points (including the current iterate $\mathbf{x}_k$) which when projected onto the subspace $\mathbf{U}$ results in a set 
$$\mathcal{Y} = \{ \mathbf{y}^i = \mathbf{U}^T \mathbf{x}^i \mid i = 2,\dots,d+1 \} \bigcup \mathbf{y}_k$$ 
(where $\mathbf{y}_k = \mathbf{U}^T \mathbf{x}_k$) of affinely independent points such that the matrix 
$$\mathbf{Y} = \frac{1}{\Delta} \left[ \mathbf{y}^2 - \mathbf{y}_k \quad \dots \quad \mathbf{y}^{d+1} - \mathbf{y}_k \right]$$
is invertible. Then, if the quadratic ridge function 
$$m(\mathbf{y}) = c + \mathbf{g}^T \mathbf{y} + \frac{1}{2} \mathbf{y}^T \mathbf{H} \mathbf{y}$$
interpolates $f$ at all points in $\mathcal{X}$ such that, for any $\mathbf{x}^i \in \mathcal{X}$, 
$$m(\mathbf{U}^T \mathbf{x}^i) = f(\mathbf{x}^i),$$
the following inequalities hold for any $\mathbf{y} = \mathbf{U}^T \mathbf{x}$ with $\mathbf{x} \in B$:
\begin{equation}
\begin{aligned}
| e^{m} (\mathbf{y}) | & \leq \kappa_3 \Delta^2 +  (2 \sqrt{d} \| \mathbf{Y}^{-1} \| \| \mathbf{U}^T \| + 1) \max_{\mathbf{x} \in B} | e^{g} (\mathbf{x}) | \\
\|\mathbf{e}^{m} (\mathbf{x}) \| & \leq \kappa_4 \Delta +  \frac{2 \sqrt{d} \| \mathbf{Y}^{-1} \|}{\Delta} \max_{\mathbf{x} \in B} | e^{g} (\mathbf{x}) |
\end{aligned}
\end{equation}
with 
\begin{equation}
\begin{split}
\kappa_3 = & \| \mathbf{U} ^T \|^2 (\gamma_g + \| \mathbf{H} \|_F) \frac{5 \sqrt{d} \| \mathbf{Y}^{-1} \| \| \mathbf{U} ^T \| + 1}{2} \\
 \kappa_4 = & (\gamma_g + \| \mathbf{H} \|_F) \frac{5 \sqrt{d} \| \mathbf{Y}^{-1} \| \| \mathbf{U} ^T \|^2}{2}.
\end{split}
\end{equation}
\end{theorem}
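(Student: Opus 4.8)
The plan is to follow the classical template for establishing Taylor-like error bounds of interpolation models (as in \cite{Conn2008} and especially Theorem 2.2 of \cite{Kannan2012}), but carried out over the reduced coordinate $\mathbf{y} = \mathbf{U}^T\mathbf{x}$, with the one structural twist that $m$ interpolates $f$ and not $g$. The key starting observation is that, for every $\mathbf{x}^i \in \mathcal{X}$,
\begin{equation*}
m(\mathbf{U}^T\mathbf{x}^i) = f(\mathbf{x}^i) = g(\mathbf{y}^i) + e^{g}(\mathbf{x}^i),
\end{equation*}
so $m$ interpolates the smooth ridge function $g$ at the projected nodes up to an additive perturbation bounded by $\max_{\mathbf{x}\in B}|e^{g}(\mathbf{x})|$. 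Consequently the whole argument reduces to analysing quadratic interpolation of the smooth function $g : \mathbb{R}^d \rightarrow \mathbb{R}$ when the interpolated values are corrupted by bounded \emph{noise}, and the dimension-reduction error enters the estimates only through this noise.

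First I would bound the gradient error at the centre $\mathbf{y}_k$. Subtracting the interpolation condition at $\mathbf{x}_k$ from that at $\mathbf{x}^i$ for $i = 2,\dots,d+1$, using the exact identity $m(\mathbf{y}^i) - m(\mathbf{y}_k) = \nabla m(\mathbf{y}_k)^T(\mathbf{y}^i-\mathbf{y}_k) + \tfrac12(\mathbf{y}^i-\mathbf{y}_k)^T\mathbf{H}(\mathbf{y}^i-\mathbf{y}_k)$ for the quadratic, and the second-order Taylor estimate $|g(\mathbf{y}^i)-g(\mathbf{y}_k)-\nabla g(\mathbf{y}_k)^T(\mathbf{y}^i-\mathbf{y}_k)| \le \tfrac{\gamma_g}{2}\|\mathbf{y}^i-\mathbf{y}_k\|^2$ together with $\|\mathbf{y}^i-\mathbf{y}_k\| \le \|\mathbf{U}^T\|\Delta$, one arrives at a linear system
\begin{equation*}
\Delta\,\mathbf{Y}^T\bigl(\nabla g(\mathbf{y}_k)-\nabla m(\mathbf{y}_k)\bigr) = \mathbf{b}, \qquad \|\mathbf{b}\|_\infty \le \tfrac12(\gamma_g+\|\mathbf{H}\|_F)\|\mathbf{U}^T\|^2\Delta^2 + 2\max_{\mathbf{x}\in B}|e^{g}(\mathbf{x})|.
\end{equation*}
Invertibility of $\mathbf{Y}$ then yields $\|\nabla g(\mathbf{y}_k)-\nabla m(\mathbf{y}_k)\| \le \tfrac{\sqrt{d}\,\|\mathbf{Y}^{-1}\|}{\Delta}\|\mathbf{b}\|_\infty$, which already exhibits the $O(\Delta)$ term and the $O(\Delta^{-1}\max_B|e^{g}|)$ amplification.

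Next I would propagate this to a general $\mathbf{y}=\mathbf{U}^T\mathbf{x}$ with $\mathbf{x}\in B$. For the gradient bound I would split $\nabla g(\mathbf{y})-\nabla m(\mathbf{y})$ through $\nabla g(\mathbf{y}_k)-\nabla m(\mathbf{y}_k)$, controlling $\|\nabla g(\mathbf{y})-\nabla g(\mathbf{y}_k)\| \le \gamma_g\|\mathbf{U}^T\|\Delta$ by Lipschitz continuity and $\|\nabla m(\mathbf{y})-\nabla m(\mathbf{y}_k)\| = \|\mathbf{H}(\mathbf{y}-\mathbf{y}_k)\| \le \|\mathbf{H}\|_F\|\mathbf{U}^T\|\Delta$; collecting terms and absorbing the lower-order contribution into the constant (legitimate since the columns of $\mathbf{Y}$ have norm at most $\|\mathbf{U}^T\|$, whence $\sqrt{d}\,\|\mathbf{Y}^{-1}\|\,\|\mathbf{U}^T\|\ge 1$) gives the stated bound with $\kappa_4$. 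For the value error, interpolation at $\mathbf{x}_k$ gives $|g(\mathbf{y}_k)-m(\mathbf{y}_k)| = |e^{g}(\mathbf{x}_k)| \le \max_{\mathbf{x}\in B}|e^{g}(\mathbf{x})|$; then writing $g(\mathbf{y})-m(\mathbf{y})$ as the difference of the two second-order Taylor remainders about $\mathbf{y}_k$ plus the linear term $(\nabla g(\mathbf{y}_k)-\nabla m(\mathbf{y}_k))^T(\mathbf{y}-\mathbf{y}_k)$ plus $g(\mathbf{y}_k)-m(\mathbf{y}_k)$, and bounding the remainders by $\tfrac{\gamma_g}{2}\|\mathbf{U}^T\|^2\Delta^2$ and $\tfrac12\|\mathbf{H}\|_F\|\mathbf{U}^T\|^2\Delta^2$ and the linear term via the previous step, produces the stated bound with $\kappa_3$.

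The hard part will not be any single estimate but the bookkeeping of the two coordinate systems: $g$ lives on $\mathbb{R}^d$ while the Lipschitz data, the trust region $B$, and the interpolation nodes live on $\mathbb{R}^n$, so every transition between the spaces costs a factor $\|\mathbf{U}^T\|$, and one must be scrupulous about where these factors accumulate in order to land exactly on $\kappa_3,\kappa_4$. A second delicate point is that only $d+1$ of the (possibly more numerous) interpolation conditions are used and $\mathbf{H}$ is treated as an externally supplied, fixed matrix rather than something pinned down by interpolation --- which is precisely why $\|\mathbf{H}\|_F$ enters the constants instead of being controlled. The last point requiring care is isolating the dimension-reduction error cleanly: it must be injected additively into the interpolation conditions, carried through the inversion of $\mathbf{Y}$ (the source of the $2\sqrt{d}\,\|\mathbf{Y}^{-1}\|$ factor), and kept separate from the genuine $O(\Delta^2)$ Taylor error, so that the final inequalities display the two error mechanisms additively as claimed.
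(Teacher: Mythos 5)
Your proposal is correct and rests on the same two pillars as the paper's proof: viewing $m$ as an interpolant of the smooth ridge profile $g$ whose data are corrupted by ``noise'' of size $\max_{\mathbf{x}\in B}|e^{g}(\mathbf{x})|$, and inverting the scaled difference matrix $\mathbf{Y}$ to convert the $d$ perturbed interpolation conditions into a gradient-error bound. The genuine difference is where the Taylor expansion of $g$ is centred. The paper expands around the arbitrary evaluation point $\mathbf{y}$ and works directly with the quantities $(\mathbf{y}^i-\mathbf{y}_k)^T\mathbf{e}^{m}(\mathbf{y})$, so the bound at a general $\mathbf{y}$ drops out of the linear system in one step, and the factor $\tfrac{5}{2}$ arises organically from $\|\mathbf{x}^i-\mathbf{x}\|\le 2\Delta$ versus $\|\mathbf{x}_k-\mathbf{x}\|\le\Delta$ (giving $(4+1)/2$). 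You instead bound the error at the centre $\mathbf{y}_k$ first --- which produces the smaller leading coefficient $\sqrt{d}\,\|\mathbf{Y}^{-1}\|\,\|\mathbf{U}^T\|^2/2$ --- and then propagate to general $\mathbf{y}$ by Lipschitz continuity of $\nabla g$ and the identity $\nabla m(\mathbf{y})-\nabla m(\mathbf{y}_k)=\mathbf{H}(\mathbf{y}-\mathbf{y}_k)$, recovering the stated $\kappa_3,\kappa_4$ only after absorbing the propagation terms via $\sqrt{d}\,\|\mathbf{Y}^{-1}\|\,\|\mathbf{U}^T\|\ge 1$. That absorption is valid (each column of $\mathbf{Y}$ has norm at most $\|\mathbf{U}^T\|$, so $\|\mathbf{Y}\|\le\sqrt{d}\,\|\mathbf{U}^T\|$ and $\|\mathbf{Y}\|\,\|\mathbf{Y}^{-1}\|\ge 1$), and it is an extra ingredient the paper's route does not need; in exchange, your route yields slightly tighter constants before relaxation and makes the $O(\Delta)$ and $O(\Delta^{-1}\max_B|e^g|)$ mechanisms visible already at the centre. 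Both arguments correctly keep the dimension-reduction error and the Taylor error additively separated, and both arrive at the claimed coefficient $2\sqrt{d}\,\|\mathbf{Y}^{-1}\|\,\|\mathbf{U}^T\|+1$ on the $\max_B|e^g|$ term of the value bound.
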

\begin{proof}
See Appendix \ref{app:proof2}.
\end{proof}

\subsection{Motivating moving ridge functions}

The majority of research into ridge function approximations has assumed that the function of interest varies along a global subspace $\mathbf{U}$ \citep{Glaws2016, Wong2019a, Gross2020}. Unless the function is an exact ridge function, i.e. $f$ is $\mathbf{z}$-invariant, this assumption may lead to a significant amount of information loss when projecting onto the subspace. However, using local subspaces $\{ \mathbf{U}_k \}$, with each  $\mathbf{U}_k$ corresponding to a small region of interest in the function domain, may allow $f$ to be accurately modeled as a $\mathbf{z}$-invariant function. To motivate this approach, six 10-dimensional functions from the CUTEst \citep{Gould2015} problem set have been considered (ARGLINA, MCCORMCK, NCVXBQP1, PENALTY1, SCHMVETT, VARDIM). For each of these functions, active subspaces have been approximated using the Monte-Carlo gradient sampling method \eqref{eq:covariance_MC} with 100,000 samples taken at uniformly distributed random locations. The active subspaces for six regions of interest defined by hypercubes of variable radius $\Delta$, all centred at the same randomly chosen location in the function domain were calculated. The eigenvalues for each of these functions are shown in Figure \ref{fig:SubspaceEigenvaluesSmallROI}. 

\begin{figure}[ht!]
\begin{center}
\begin{subfigmatrix}{3}
\subfigure[ARGLINA]{\includegraphics[width=4.5cm]{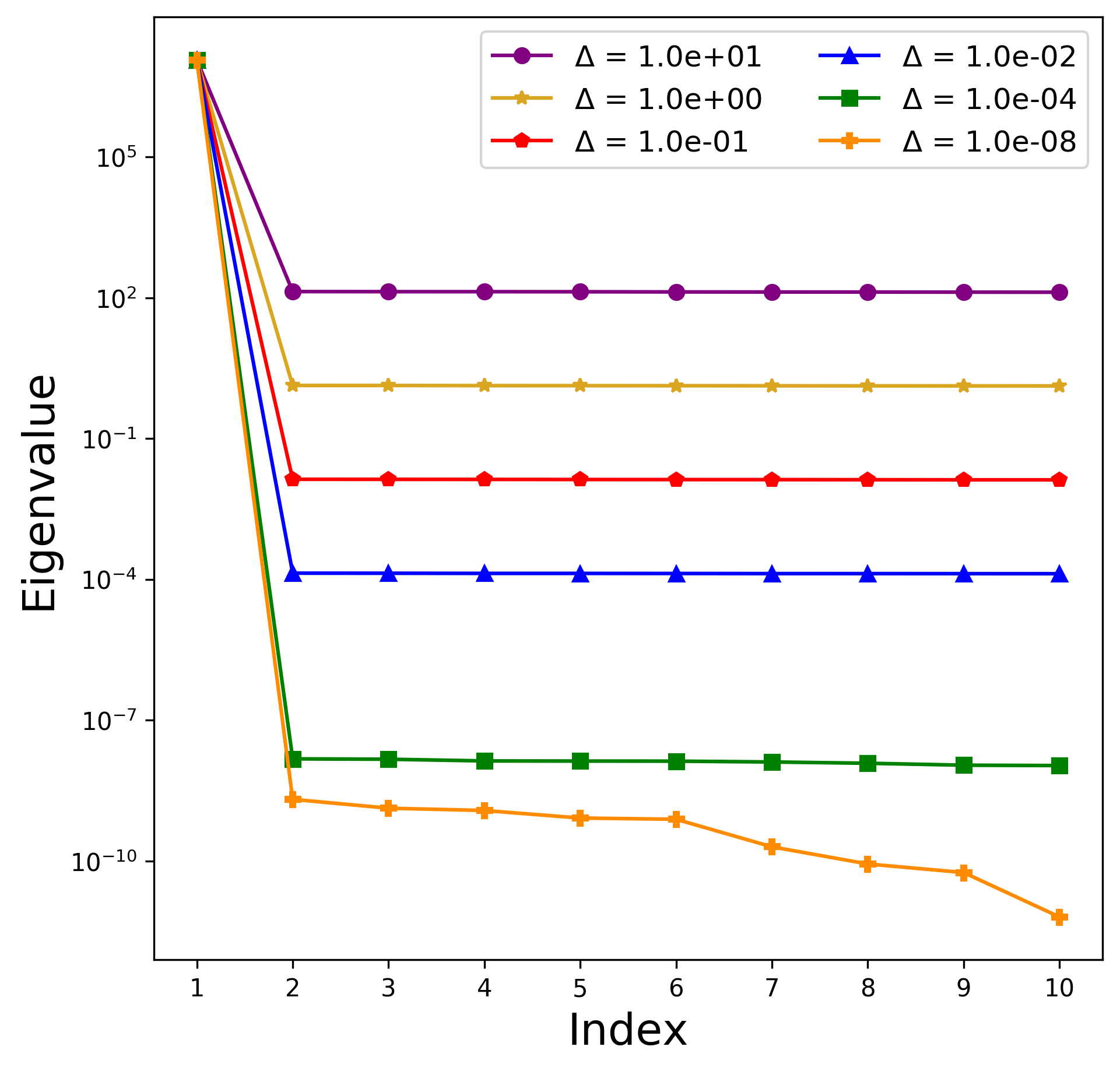}} 
\subfigure[MCCORMCK]{\includegraphics[width=4.5cm]{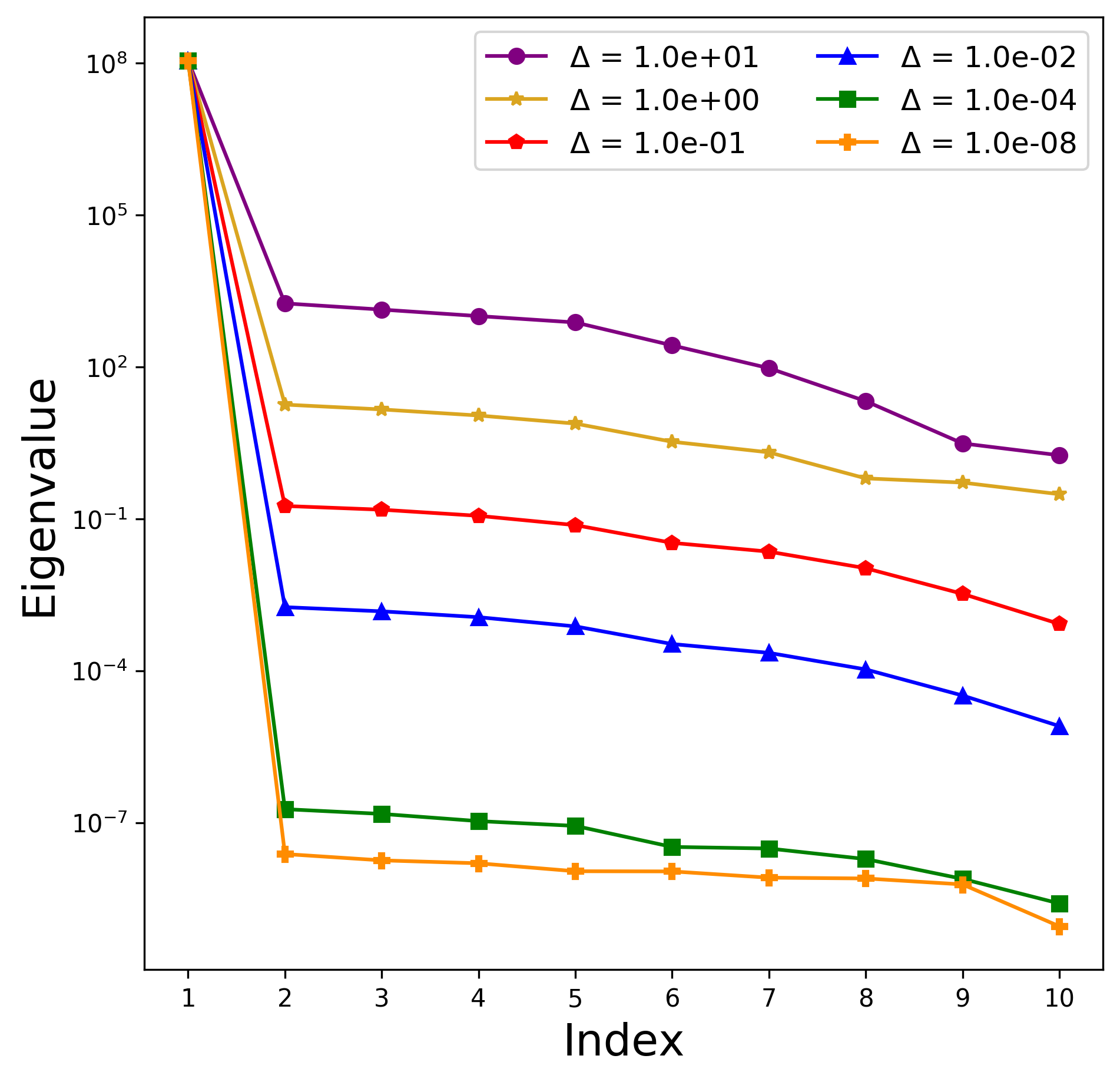}} 
\subfigure[NCVXBQP1]{\includegraphics[width=4.5cm]{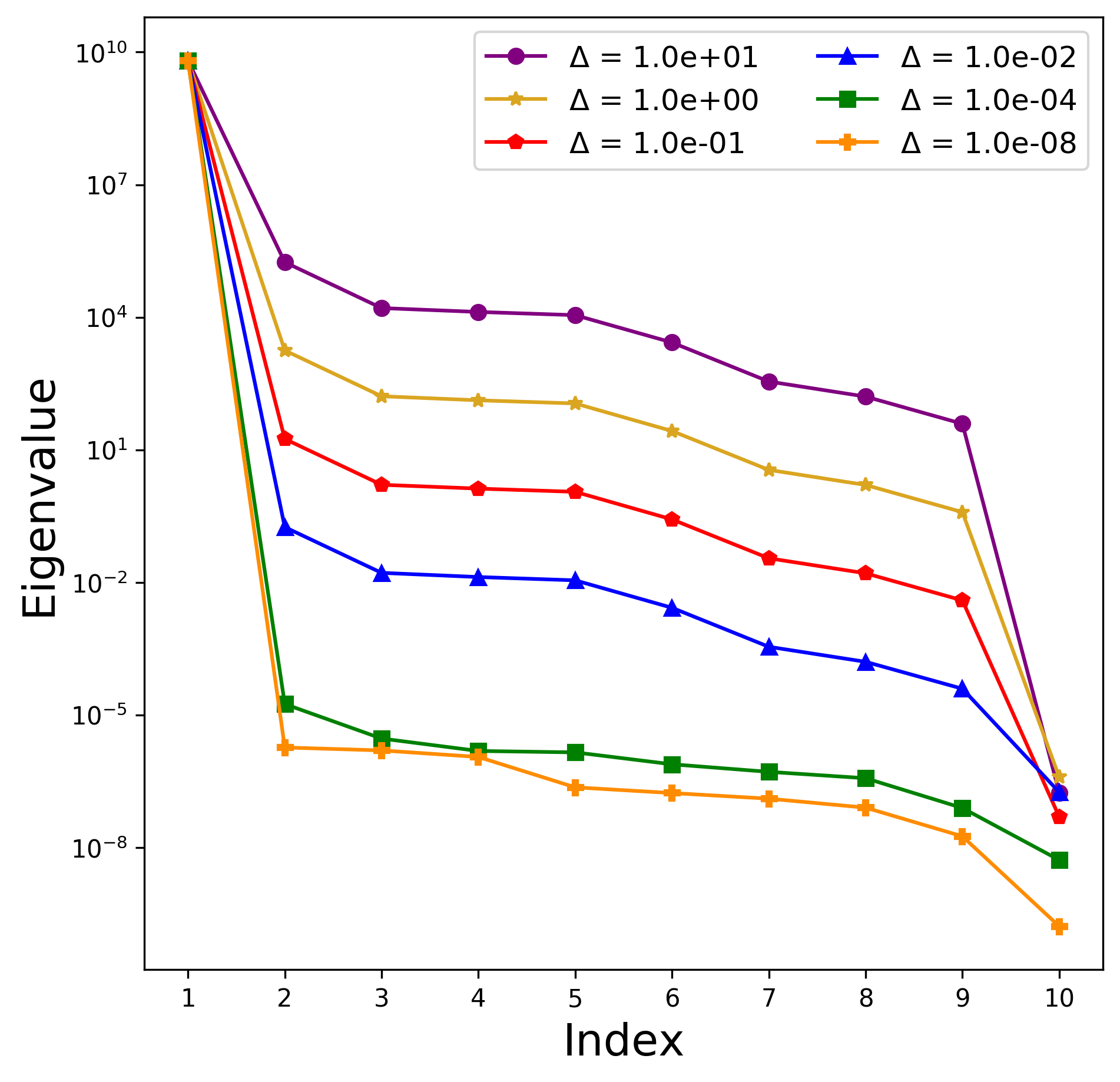}}
\subfigure[PENALTY1]{\includegraphics[width=4.5cm]{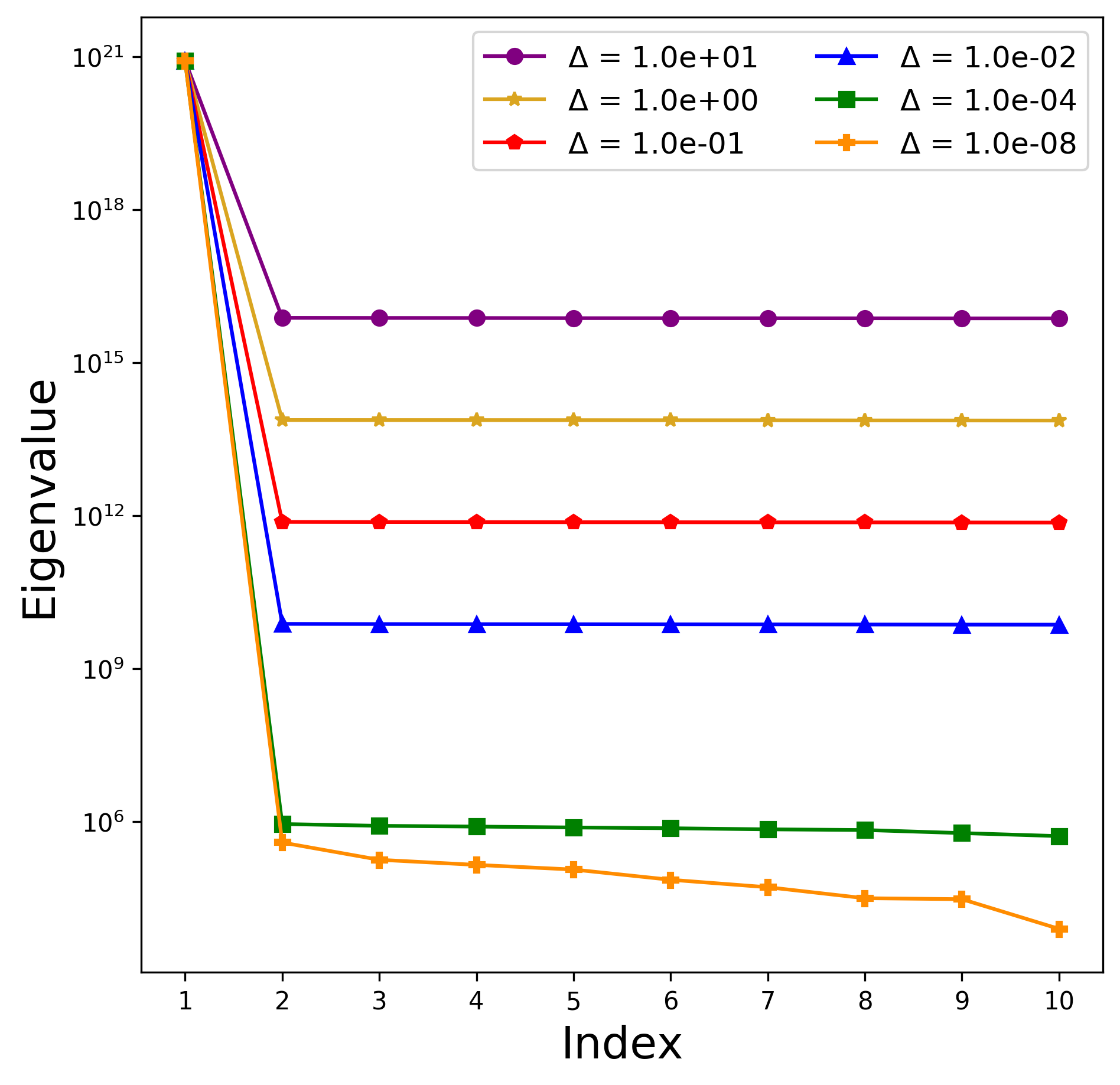}} 
\subfigure[SCHMVETT]{\includegraphics[width=4.5cm]{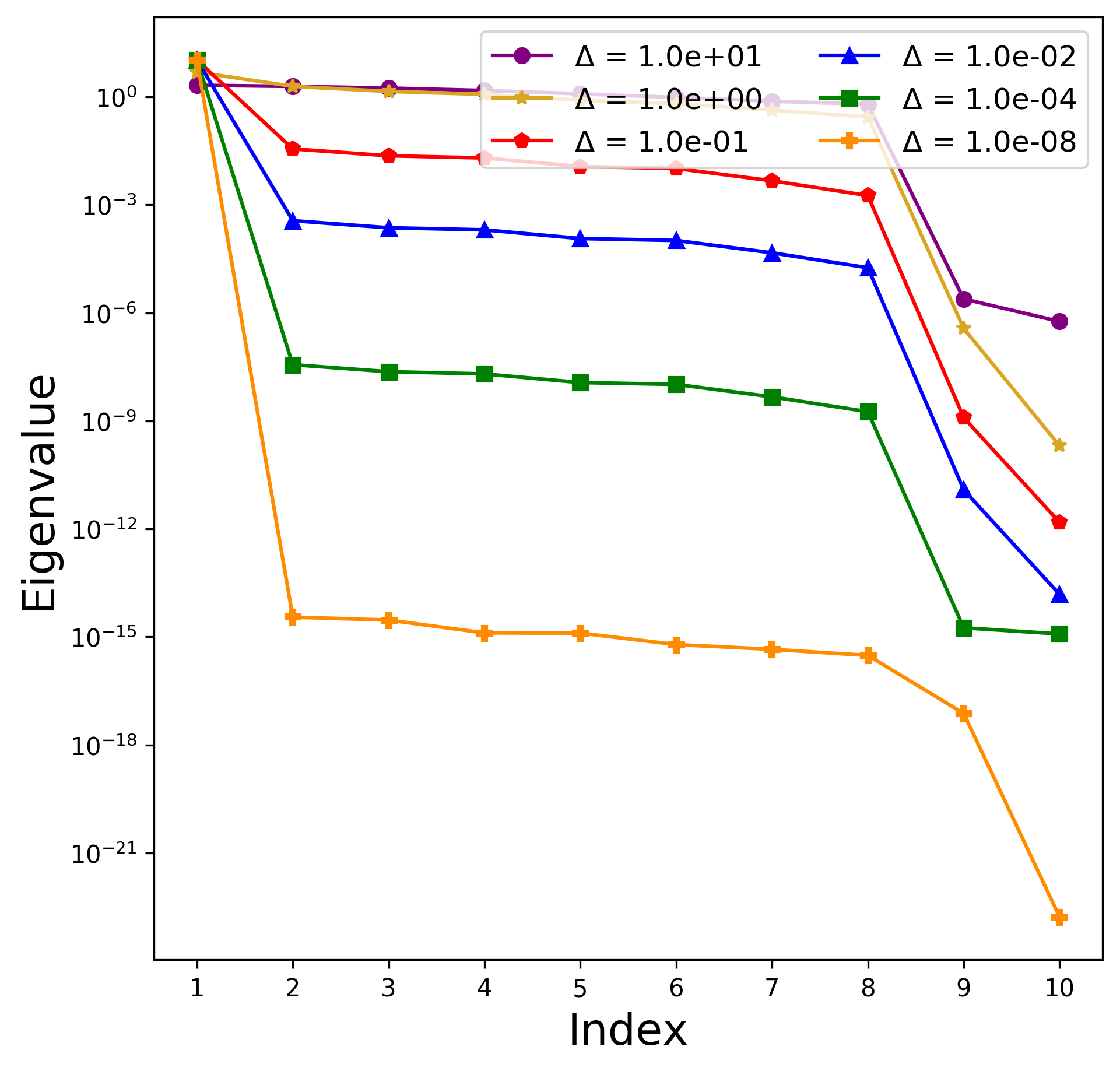}} 
\subfigure[VARDIM]{\includegraphics[width=4.5cm]{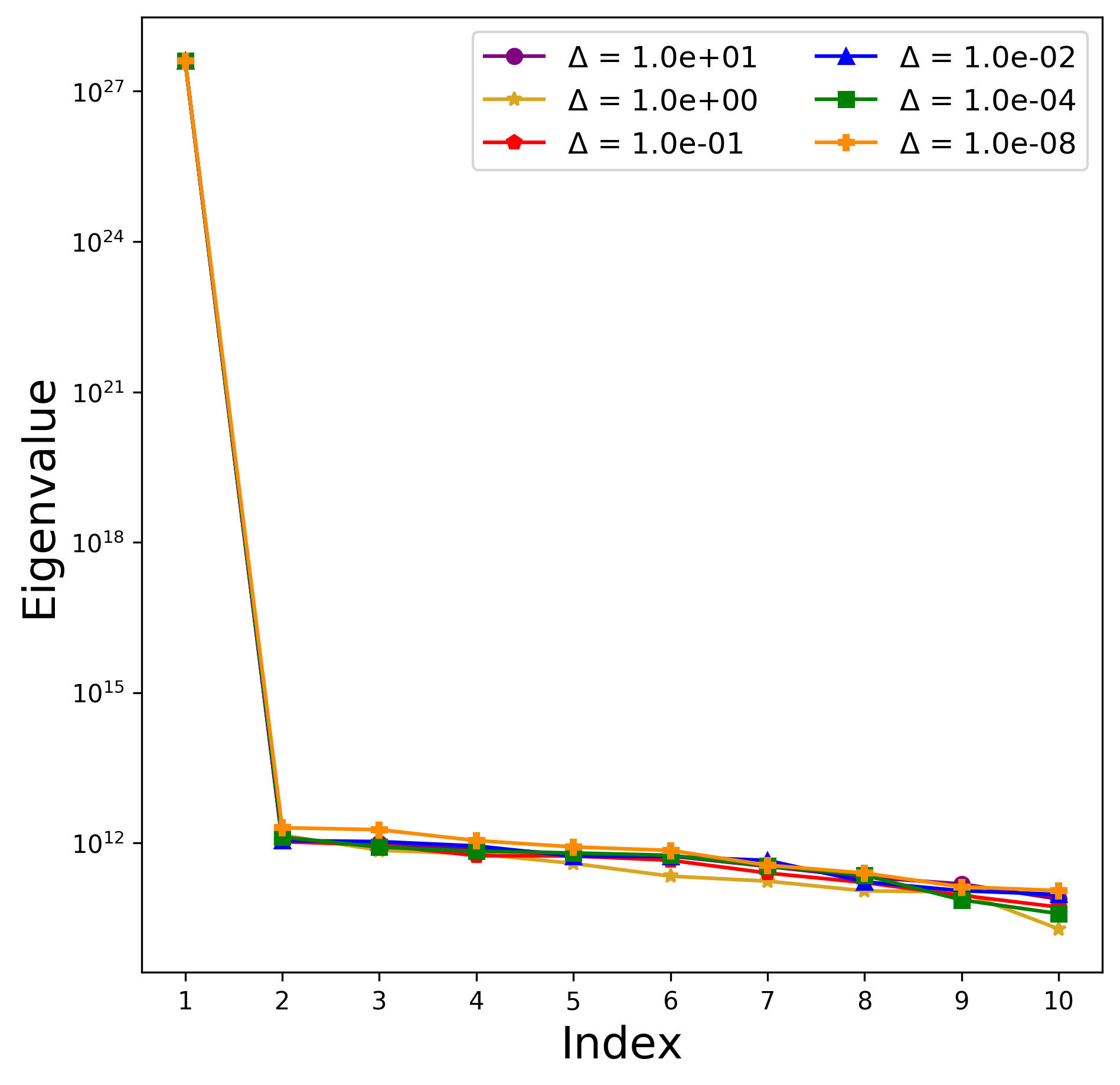}}
\end{subfigmatrix}
\end{center}
\caption{Active subspace eigenvalues for functions a) ARGLINA, b) MCCORMCK, c) NCVXBQP1 d) PENALTY1, e) SCHMVETT, and f) VARDIM for domains of variable size.}
\label{fig:SubspaceEigenvaluesSmallROI}
\end{figure}

From inspection of Figure \ref{fig:SubspaceEigenvaluesSmallROI}, it is apparent that, as $\Delta$ decreases, the gap between the first eigenvalue and the remaining eigenvalues generally increases for these functions. This suggests that, as the region of interest becomes smaller, these functions become inherently 1-dimensional. Moreover, for many of these functions the remaining eigenvalues seem to tend to zero as $\Delta$ decreases. By Lemma \ref{lemma:variation}, this implies very little to no average variability along the inactive variables $\mathbf{z}$, meaning these functions can be treated as nearly $\mathbf{z}$-invariant for small $\Delta$. Note, there clearly exist functions which seemingly do not have eigenvalues which tend to zero. For instance, although VARDIM has seemingly very strong 1-dimensional structure at this particular location in the function domain, the remaining eigenvalues are still $\mathcal{O}(10^{12})$. This suggests that at this particular region of the domain, although VARDIM has strong 1-dimensional structure, this structure does not get more prominent as the size of function domain decreases.

In order to use dimension-reducing subspaces in a DFTR algorithm, it is hypothesized that the subspaces should be periodically updated as one moves through the function domain. To motivate this hypothesis, local active subspaces for six regions have been defined by hypercubes of radius $\Delta = 1.0 \times 10^{-3}$ for each function. The centroids of the hypercubes were chosen using Latin hypercube sampling such that each local region was sufficiently distant from the others. The eigenvalues of these subspaces are shown in Figure \ref{fig:SubspaceEigenvaluesROI}. 

\begin{figure}[ht!]
\begin{center}
\begin{subfigmatrix}{3}
\subfigure[ARGLINA]{\includegraphics[width=4.5cm]{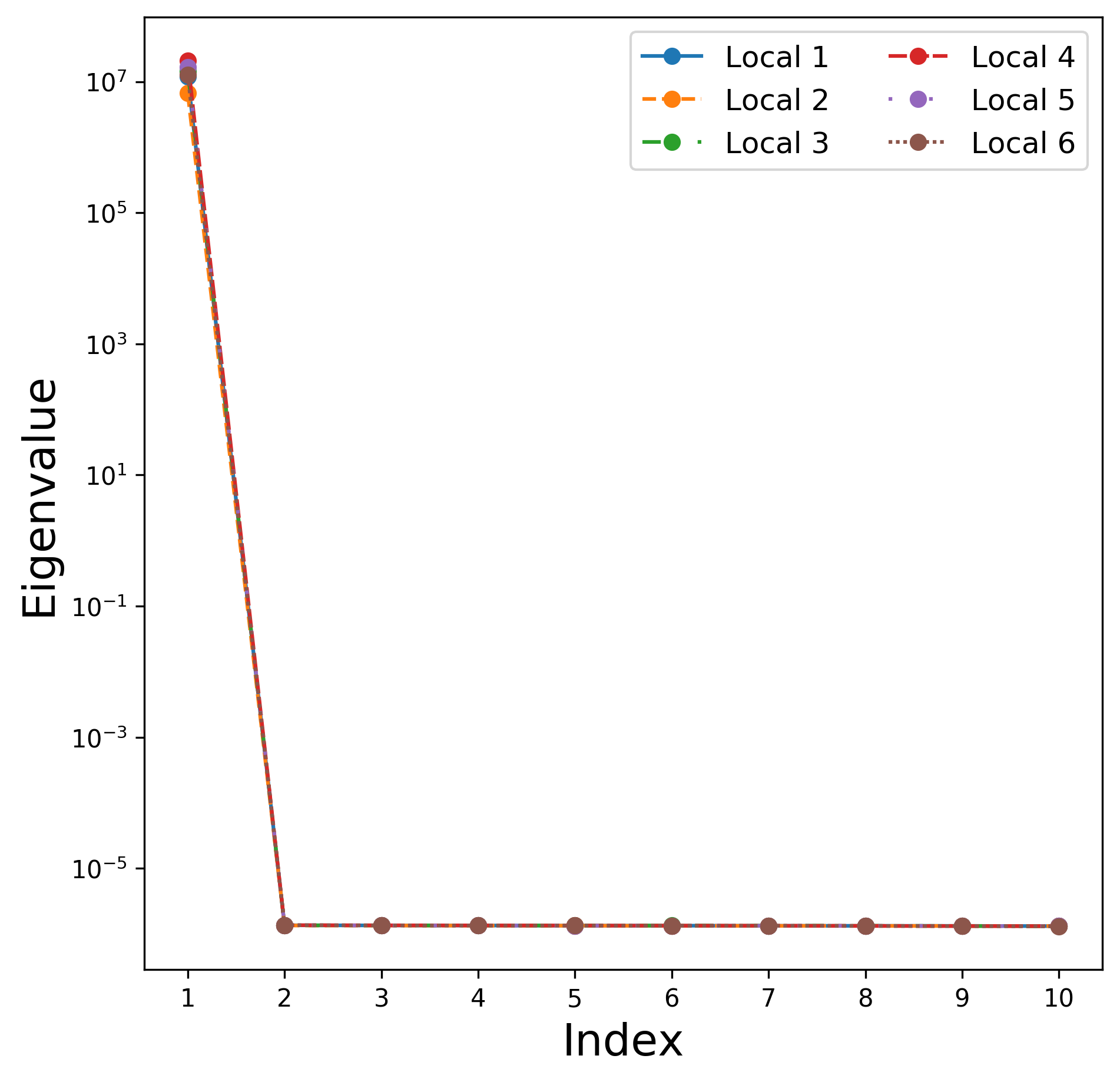}} 
\subfigure[MCCORMCK]{\includegraphics[width=4.5cm]{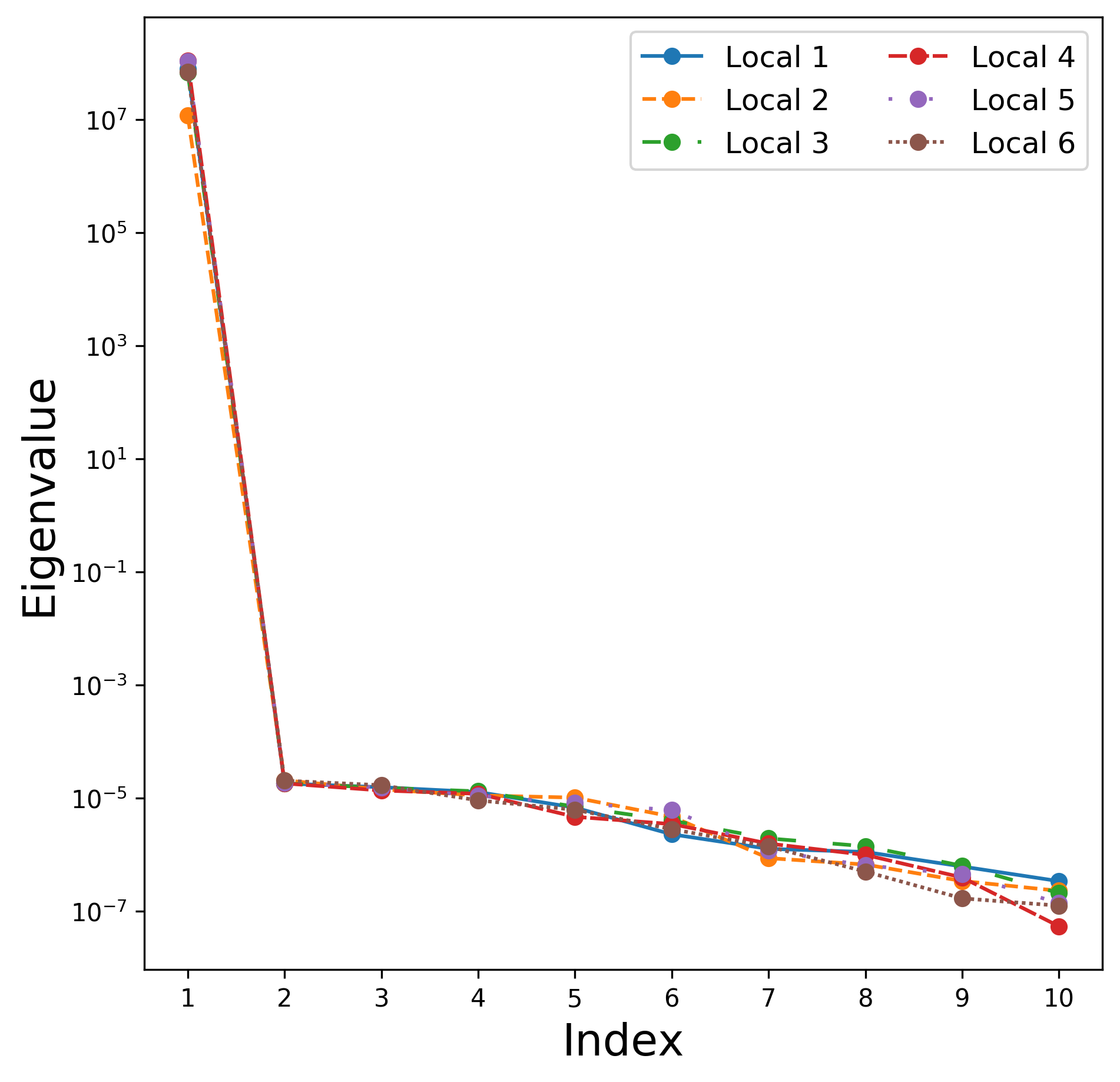}} 
\subfigure[NCVXBQP1]{\includegraphics[width=4.5cm]{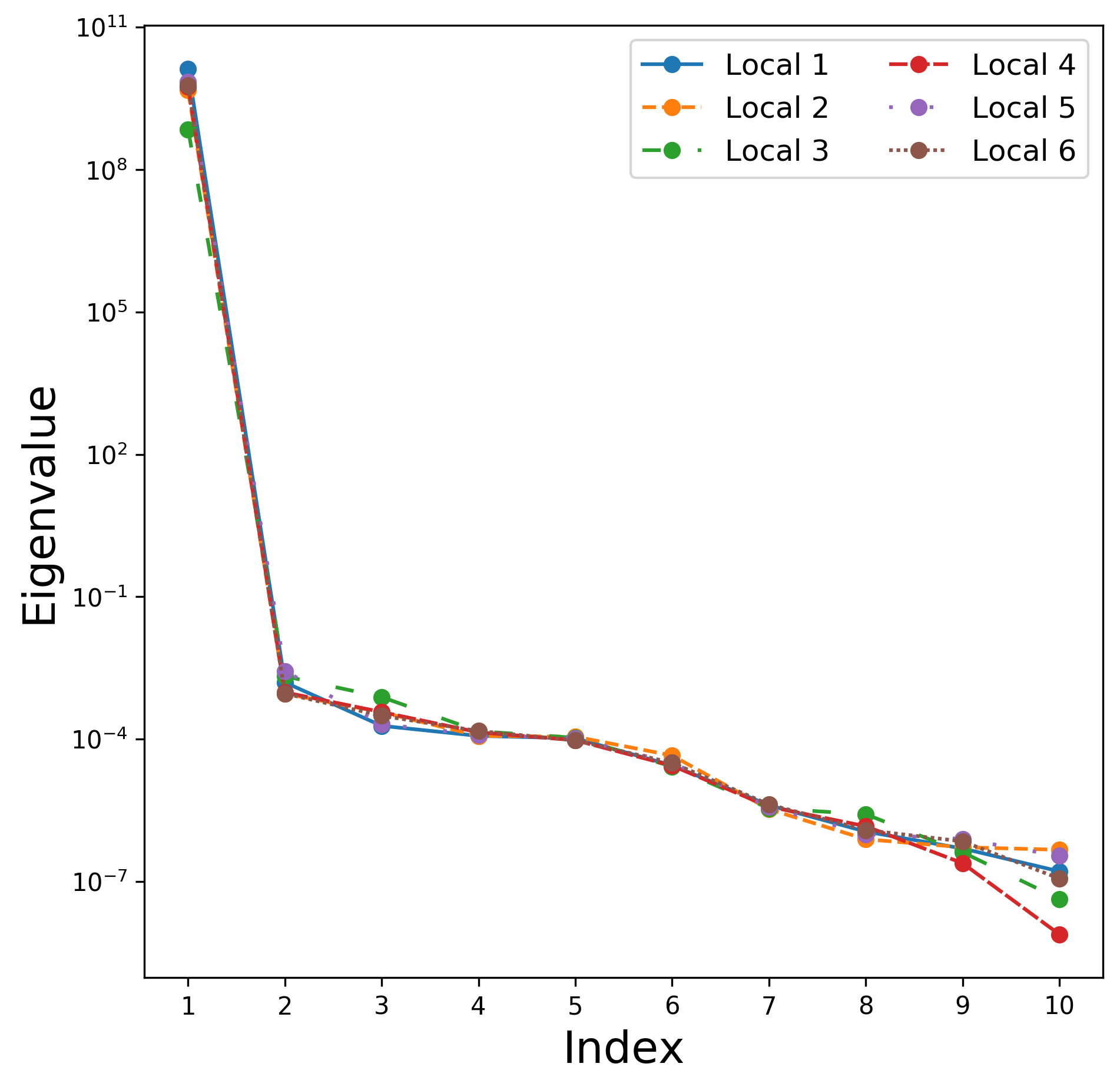}}
\subfigure[PENALTY1]{\includegraphics[width=4.5cm]{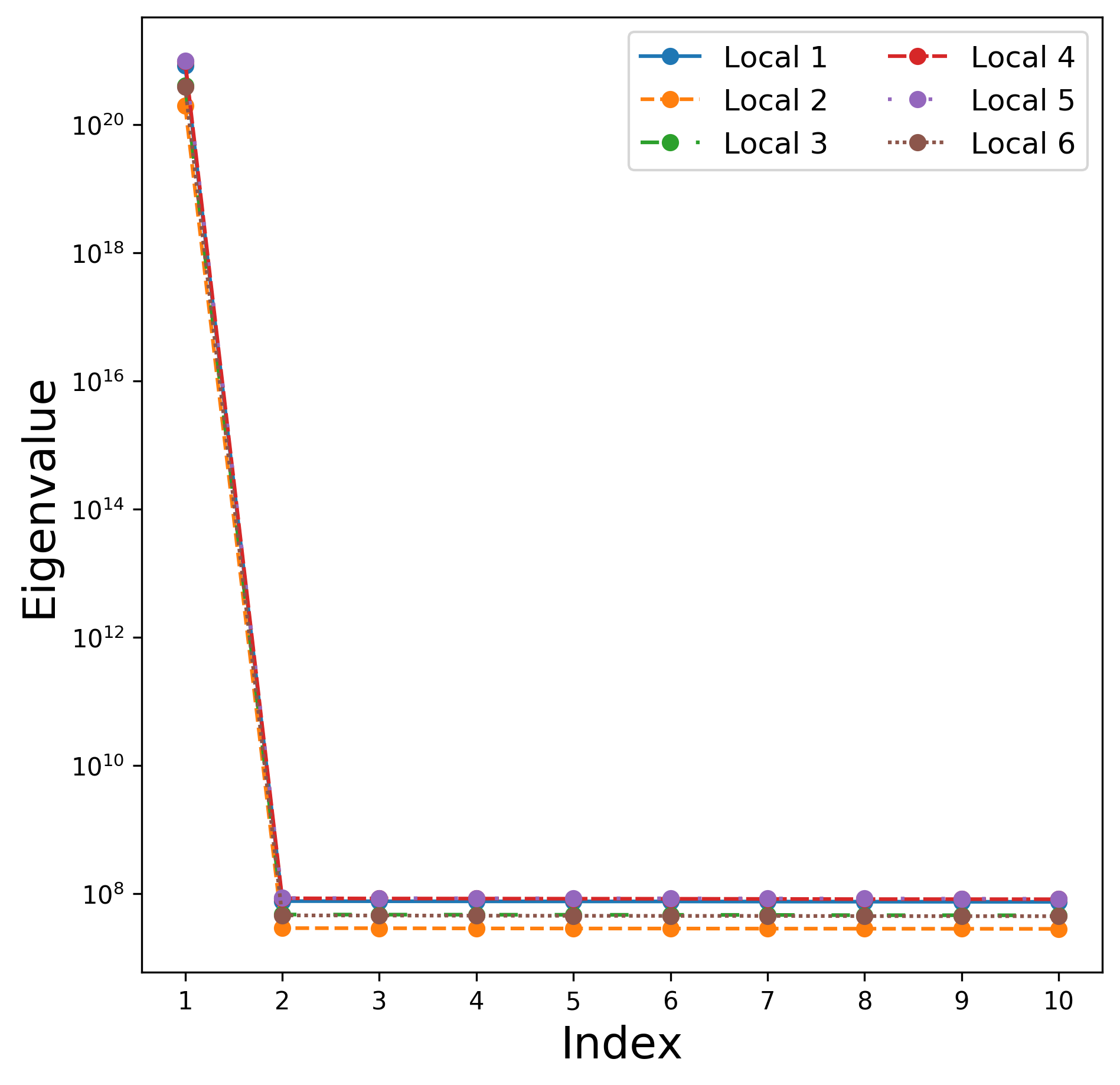}} 
\subfigure[SCHMVETT]{\includegraphics[width=4.5cm]{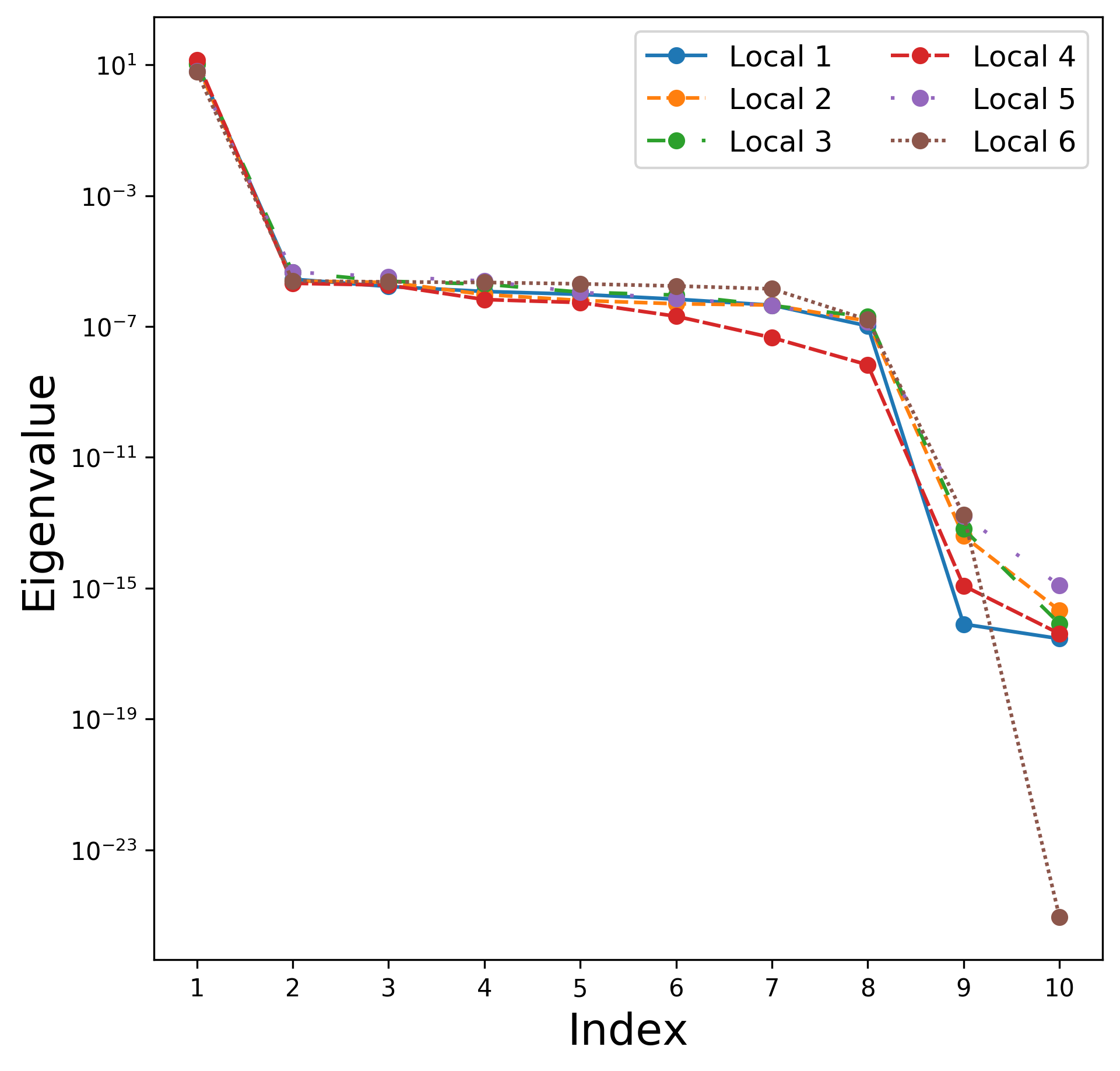}} 
\subfigure[VARDIM]{\includegraphics[width=4.5cm]{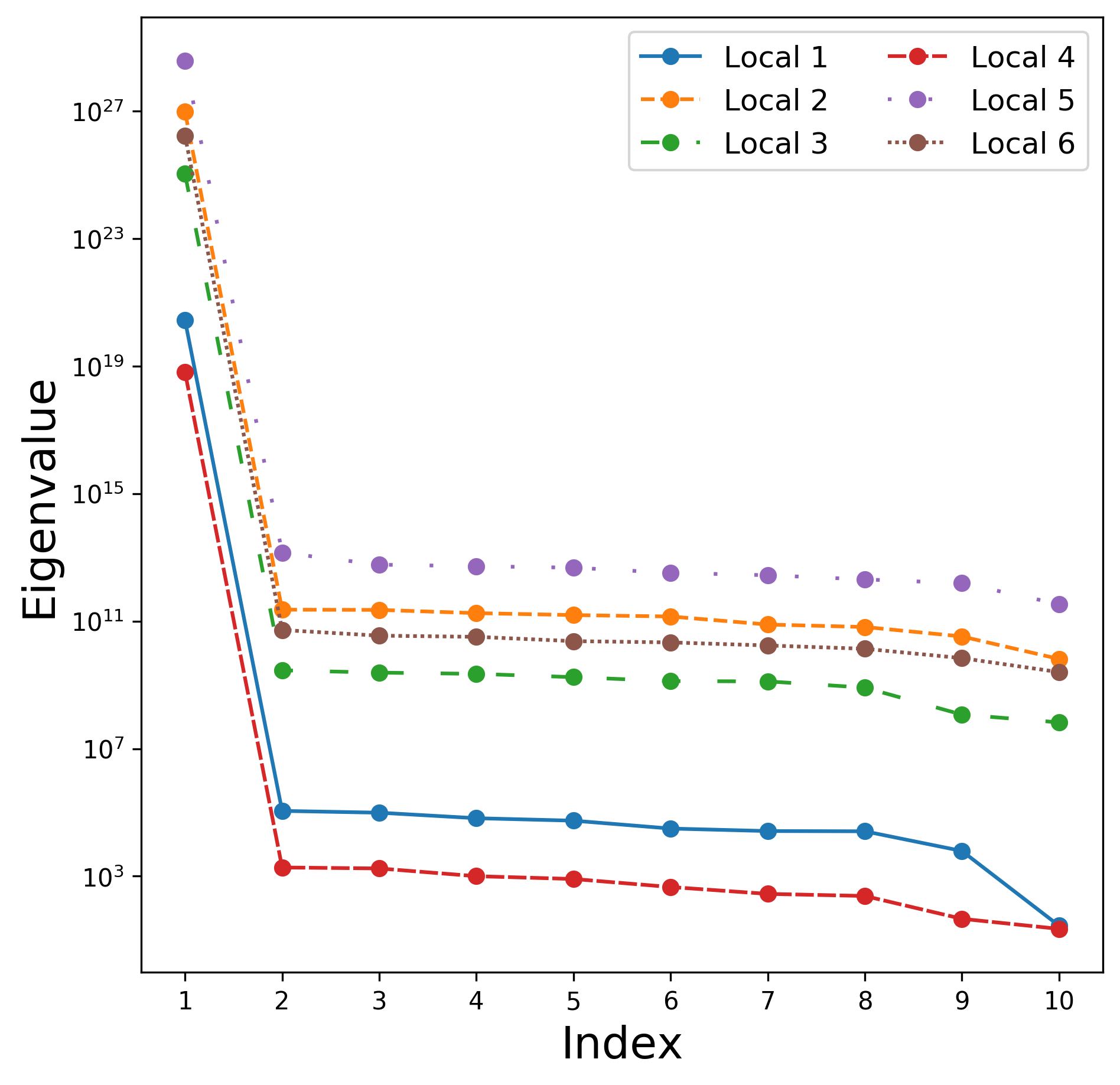}}
\end{subfigmatrix}
\end{center}
\caption{Active subspace eigenvalues for functions a) ARGLINA, b) MCCORMCK, c) NCVXBQP1 d) PENALTY1, e) SCHMVETT, and f) VARDIM for domains of radius $\Delta = 1.0 \times 10^{-3}$ at variable locations.}
\label{fig:SubspaceEigenvaluesROI}
\end{figure}

For each of these functions, one can see a significant log decay in the eigenvalues after the first index for all of the local subspaces. This suggests that, for each of these regions of interest, the direction defined by the first eigenvector of \eqref{eq:covariance_MC} captures a significant amount of the variation of the function. Note, the observation that, in small regions of interest, multivariate functions can be approximated by 1-dimensional ridge functions is not surprising. In particular, the first-order Taylor expansion 
\begin{equation}
\label{eq:taylor_linear}
m(\mathbf{x}) = f(\mathbf{a}) + \nabla f(\mathbf{a})^T ( \mathbf{x} - \mathbf{a} )
\end{equation}
can be considered a 1-dimensional ridge function with the subspace $\mathbf{U} = \nabla f(\mathbf{a})$. However, this observation clearly breaks down when considering subspaces of higher dimension. Nevertheless, using subspaces of higher dimension may still be advantageous for some problems, as will be seen later. 

The weights from each of these 1-dimensional subspaces are shown in Figure \ref{fig:SubspaceEigenvectorsSmallROI}, with the size of the markers indicating the relative size of the weights. Not surprisingly, the weight vectors generally vary significantly between each region of interest. Clearly, it would be nearly impossible to accurately describe these functions with constant subspaces, as there are regions which have a weight vector which is linearly independent of the weight vectors associated with other regions of the function domain. Interestingly, some functions seem to have multiple regions of the function domain which can be defined using a single subspace. For instance, it looks to be possible to define these six regions of interest for the SCHMVETT function by 3 low-dimensional subspaces, one using strictly the 9th parameter, another using the 10th parameter, and one with a mixture of the two. This observation may motivate the use of subspace clustering techniques \citep{L.2004} in further studies. 

\begin{figure}[ht!]
\begin{center}
\begin{subfigmatrix}{3}
\subfigure[ARGLINA]{\includegraphics[width=4.2cm]{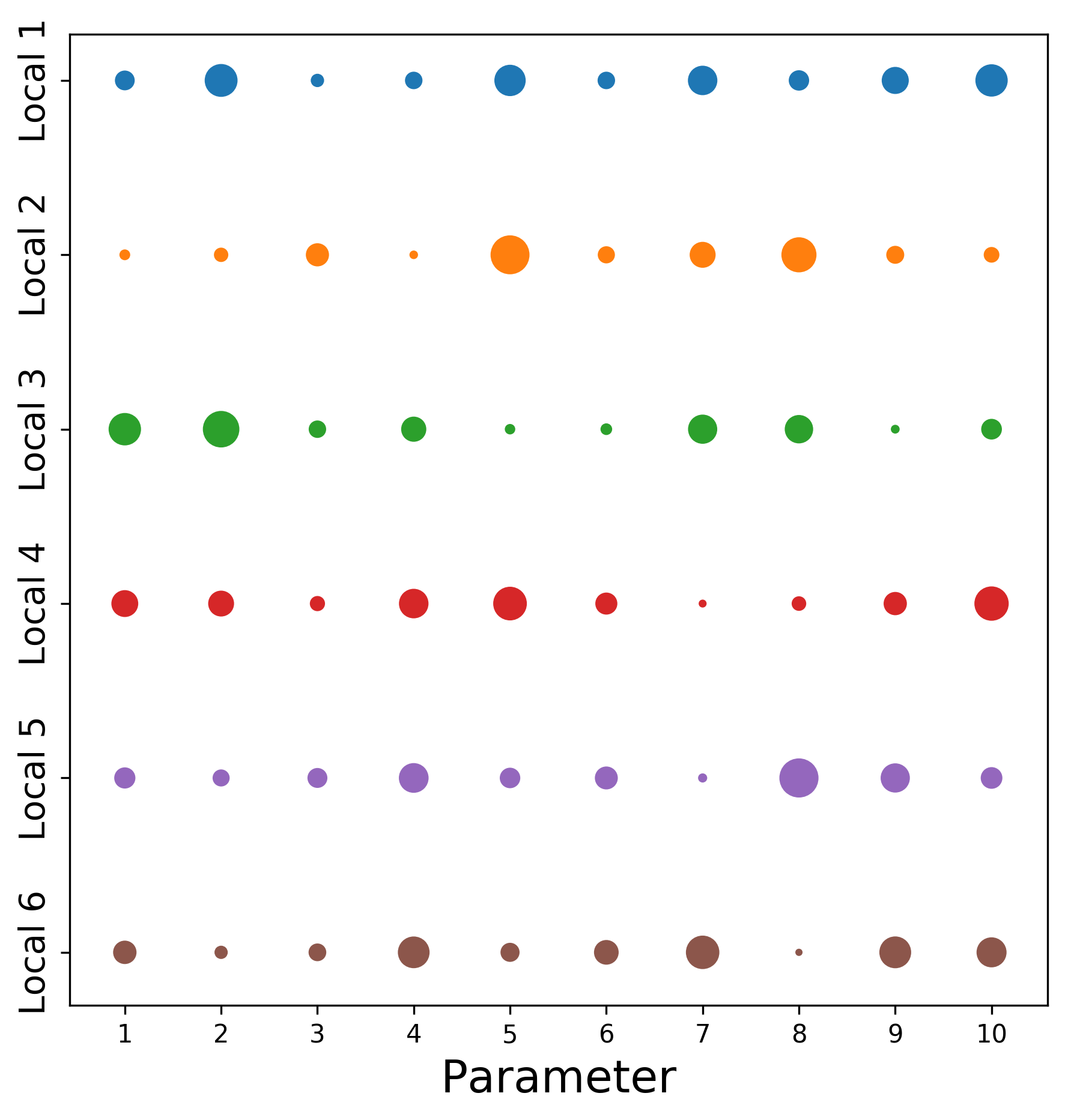}} 
\subfigure[MCCORMCK]{\includegraphics[width=4.2cm]{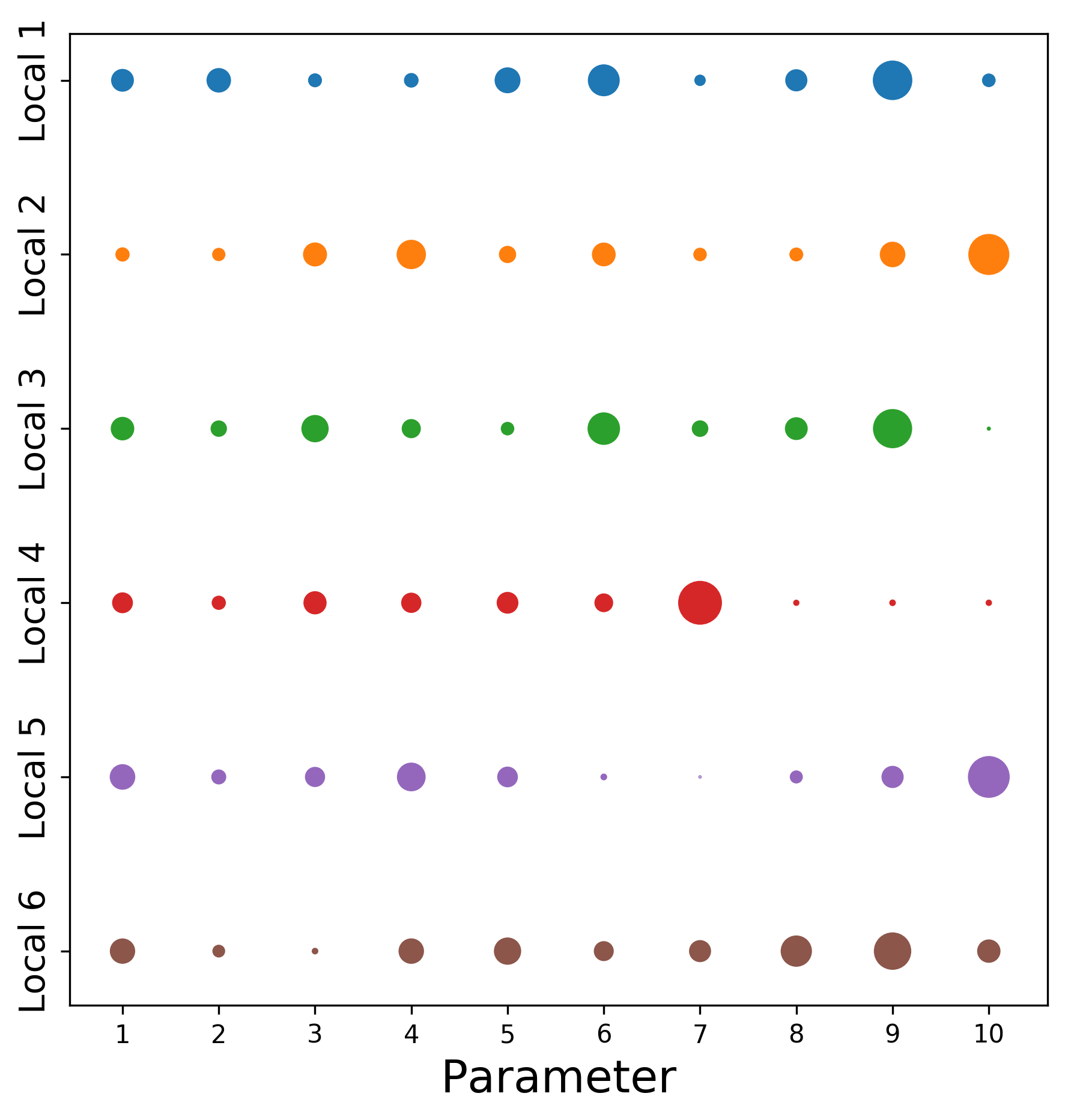}} 
\subfigure[NCVXBQP1]{\includegraphics[width=4.2cm]{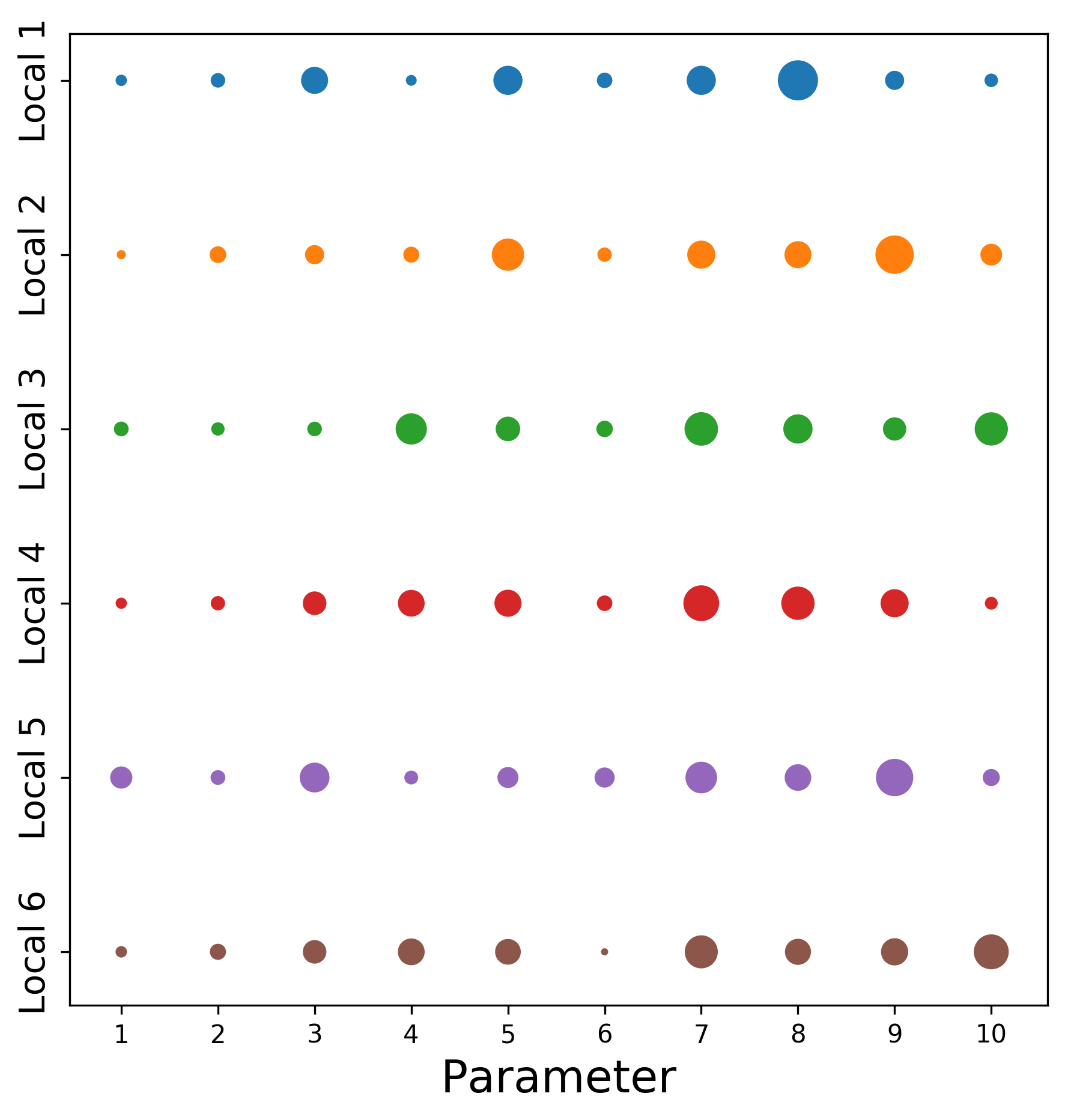}}
\subfigure[PENALTY1]{\includegraphics[width=4.2cm]{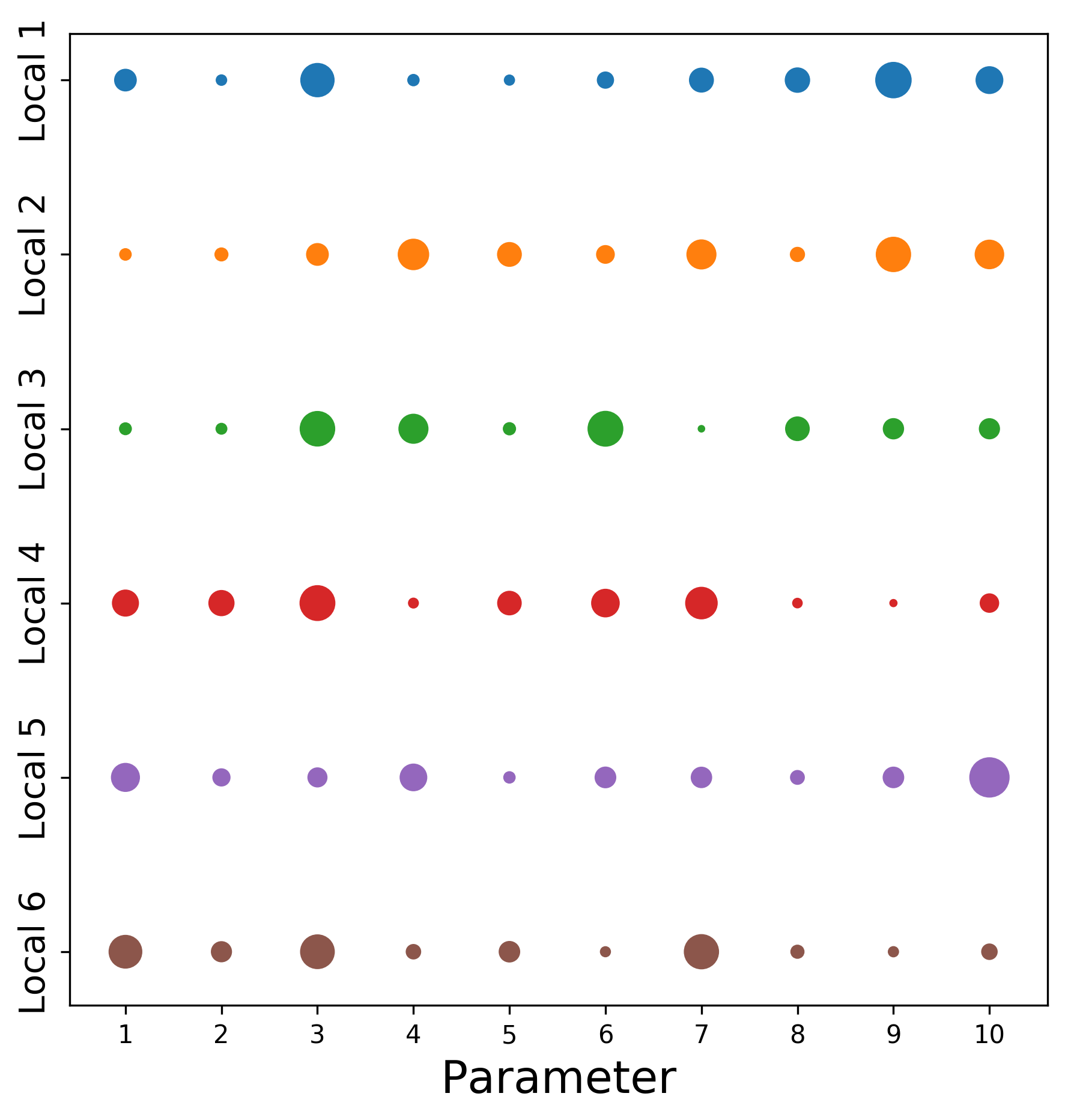}} 
\subfigure[SCHMVETT]{\includegraphics[width=4.2cm]{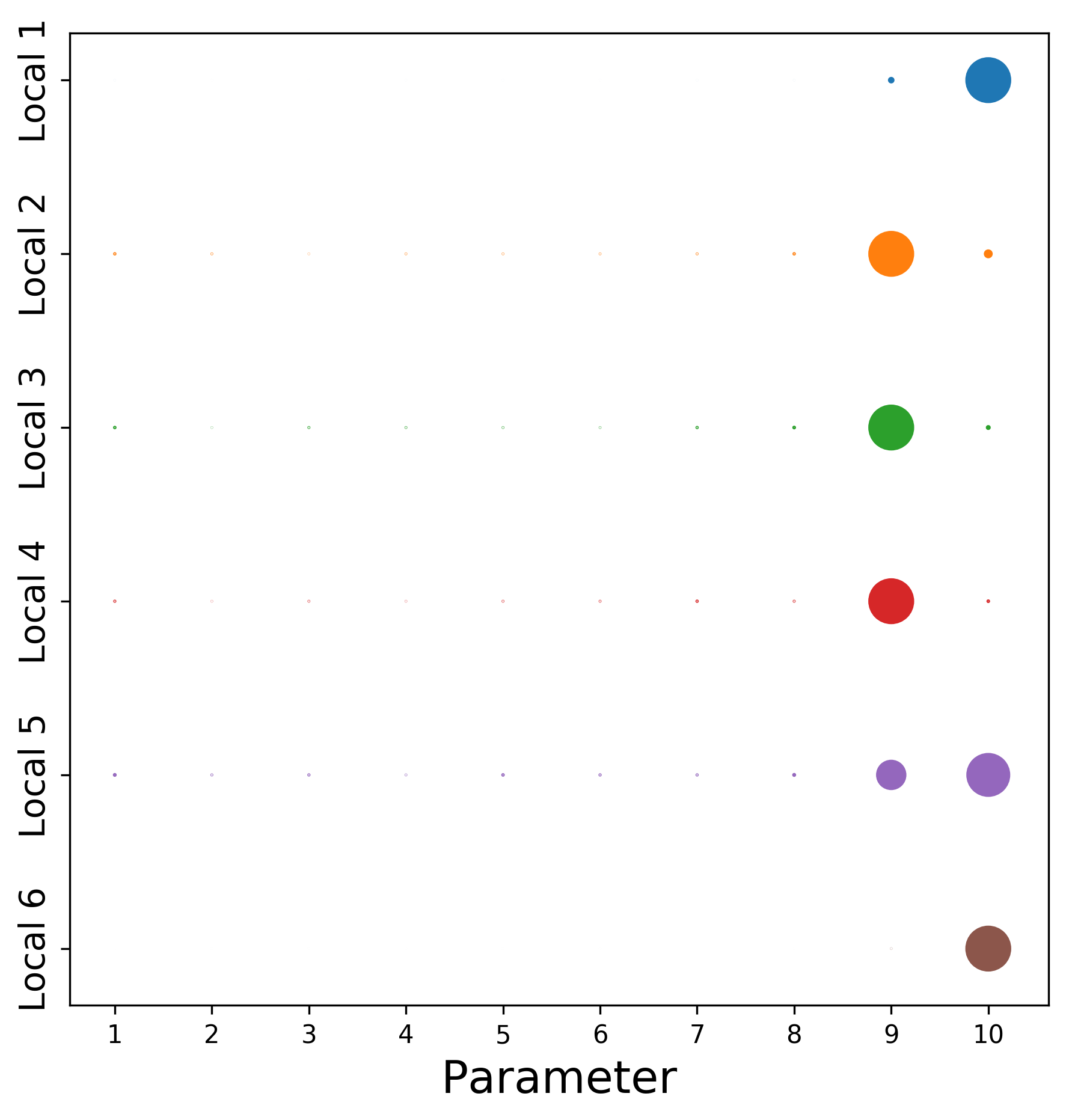}} 
\subfigure[VARDIM]{\includegraphics[width=4.2cm]{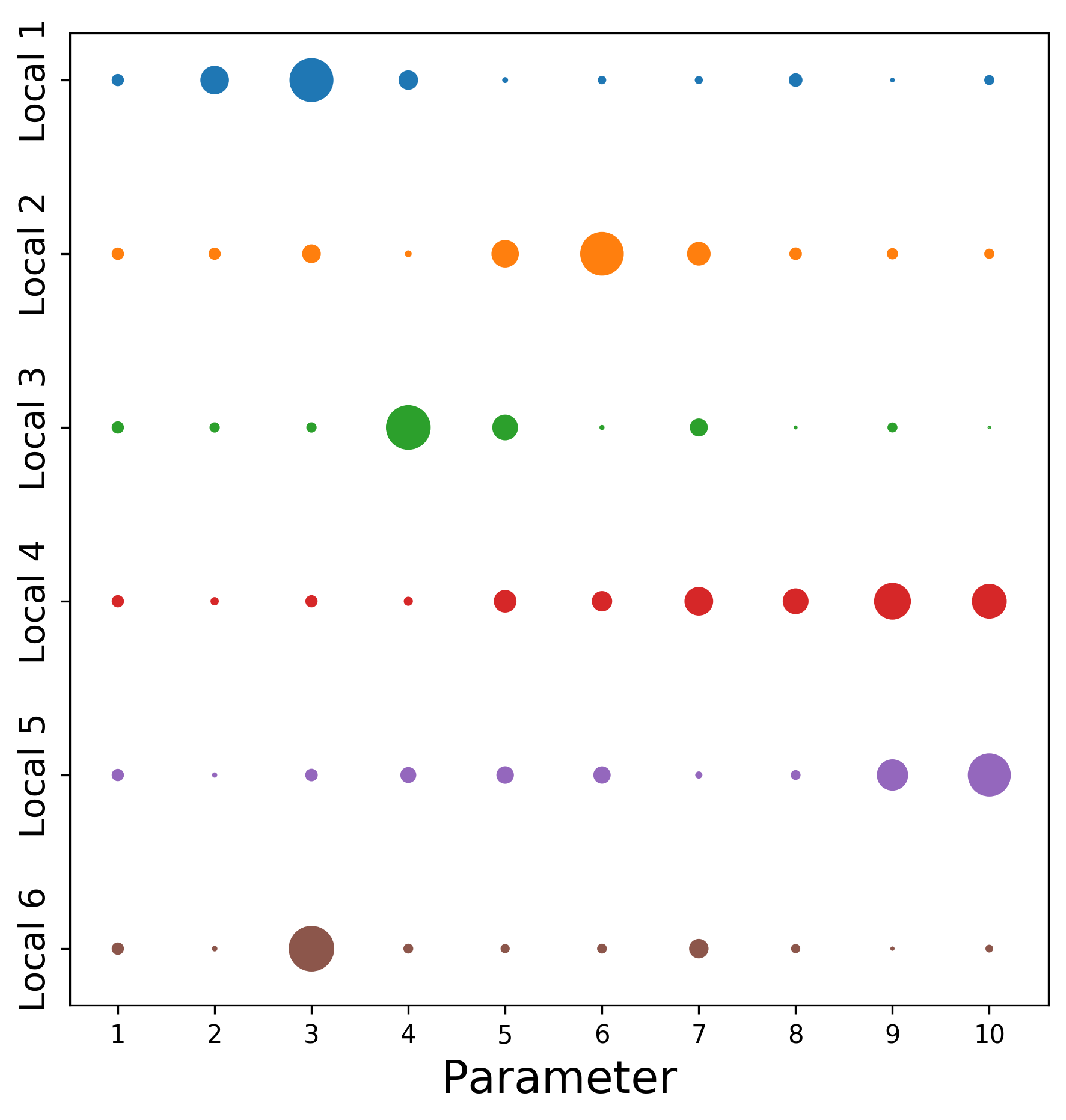}}
\end{subfigmatrix}
\end{center}
\caption{Weights for 1-dimensional active subspaces for functions a) ARGLINA, b) MCCORMCK, c) NCVXBQP1 d) PENALTY1, e) SCHMVETT, and f) VARDIM for domains of radius $\Delta = 1 \times 10^{-3}$ at variable locations.}
\label{fig:SubspaceEigenvectorsSmallROI}
\end{figure}

\section{OMoRF algorithm}
\label{sec:OMoRF}

The OMoRF algorithm is detailed in Algorithm \ref{alg:OMoRF}. At each iteration, a local subspace $\mathbf{U}_k$ is determined and a quadratic ridge function $m_k(\mathbf{U}_k^T \mathbf{x})$ is constructed. To ensure the accuracy of this model, two separate interpolation sets are maintained: the set $\mathcal{X}_k^{sub}$ is used to construct the local subspace $\mathbf{U}_k$, while $\mathcal{X}_k^{int}$ is used to determine the coefficients of the interpolation model $m_k$. Next, the trust region subproblem
\begin{equation}
\label{eq:Subproblem}
\begin{split}
\min_{\mathbf{s}} \quad & m_k(\mathbf{U}_k^T (\mathbf{x}_k + \mathbf{s})) \\
\text{subject to} \quad & \| \mathbf{s} \| \leq \Delta_k
\end{split}
\end{equation} 
is solved to obtain a candidate solution $\mathbf{x}_k + \mathbf{s}_k$. The ratio
\begin{equation}
\label{eq:trustfactor2}
r_k = \frac{f(\mathbf{x}_k) - f(\mathbf{x}_k + \mathbf{s}_k)}{m_k(\mathbf{U}_k^T \mathbf{x}_k) - m_k(\mathbf{U}_k^T (\mathbf{x}_k +  \mathbf{s}_k))}
\end{equation} is used to determine whether or not this candidate solution is accepted and if the trust region radius is decreased. Before decreasing the trust region, checks on the quality of $\mathcal{X}_k^{sub}$ and $\mathcal{X}_k^{int}$ are performed, and if necessary, their geometries are improved by calculating new sample points.

\begin{algorithm}[ht!]
  	\caption{Optimization by Moving Ridge Functions}
  	\begin{algorithmic}[1]
  	\State Let starting point $\mathbf{x}_0 \in \mathbb{R}^n$ and initial trust region radius $\Delta_0 > 0$ be given.
  	\State Set values of algorithmic parameters $\rho_0 = \Delta_0$, $0 < \gamma_1 < 1 \leq \gamma_2 \leq \gamma_3$, $0 < \eta_1 < \eta_2 < 1$, $\gamma_s>0$, $0 < \omega_s < 1$, and $1 \leq d < n$.
  	\State Build an initial set $\mathcal{X}_0^{sub}$ of $n+1$ samples.
  	\State Construct $\mathbf{U}_0$ with points $\mathcal{X}_0^{sub}$ using \eqref{eq:global_linear_AS} if $d=1$ or \eqref{eq:nonlinearslq} if $d>1$.
  	\State Build an initial set $\mathcal{X}_0^{int}$ of $\frac{1}{2}(d+1)(d+2)$ samples.
	\For{$k = 0, \enskip 1, \enskip \dots $} 
    	\State Construct $d$-dimensional quadratic $m_k$ using $\mathcal{Y}_{k}^{int} = \{ \mathbf{U}_k^T \mathbf{x}^i \mid \mathbf{x}^i \in \mathcal{X}_k^{int} \}$.
      	\State Solve \eqref{eq:Subproblem} to get $\mathbf{s}_k$.
      	\If{$\| \mathbf{s}_k \| \leq \gamma_s \rho_k$}
      		\State Set $\Delta_{k+1} = \max(\omega_s \Delta_k, \rho_k)$.
      		\State Invoke Algorithm \ref{alg:model_improvement} to get $\mathcal{X}_{k+1}^{int}$, $\mathcal{X}_{k+1}^{sub}$, $\mathbf{U}_{k+1}$, $\rho_{k+1}$, and $\Delta_{k+1}$ (evaluating $f$ for any new samples). 
      		\State \textbf{go to} line 6
      	\EndIf
      	\State Evaluate $f(\mathbf{x}_k + \mathbf{s}_k)$ and calculate ratio $r_k$ \eqref{eq:trustfactor2}.
      	\State Accept/reject step and update trust region radius:
\small
\begin{gather*}
\mathbf{x}_{k+1} =     
\begin{cases}
  \mathbf{x}_k + \mathbf{s}_k, & r_k \geq \eta_1,\\    
  \mathbf{x}_k,  & r_k < \eta_1,
\end{cases}
\quad 
\text{and}
\quad
\Delta_{k+1} =     
\begin{cases}
  \max(\gamma_2 \Delta_k, \gamma_3 \|\mathbf{s}_k\|), & r_k \geq \eta_2,\\    
  \max(\gamma_1 \Delta_k, \|\mathbf{s}_k\|, \rho_k),  & \eta_1 \leq r_k < \eta_2,\\
  \max(\min(\gamma_1 \Delta_k, \|\mathbf{s}_k\|), \rho_k), & r_k < \eta_1.
\end{cases}
\end{gather*}
\normalsize
		\State Append $\mathbf{x}_k + \mathbf{s}_k$ to $\mathcal{X}_k^{int}$ and $\mathcal{X}_k^{sub}$.
      	\If{$r_k \geq \eta_1$}
      		\State Invoke Algorithm \ref{alg:incremental_improvements} (without finding new samples) to get $\mathcal{X}_{k+1}^{int}$ and $\mathcal{X}_{k+1}^{sub}$.
      		\State Set $\mathbf{U}_{k+1} = \mathbf{U}_k$ and $\rho_{k+1} =\rho_k$. 
      	\Else
      		\State Invoke Algorithm \ref{alg:model_improvement} to get $\mathcal{X}_{k+1}^{int}$, $\mathcal{X}_{k+1}^{sub}$, $\mathbf{U}_{k+1}$, $\rho_{k+1}$, and $\Delta_{k+1}$ (evaluating $f$ for any new samples).
      	\EndIf
    \EndFor
\end{algorithmic}
\label{alg:OMoRF}
\end{algorithm}

\begin{remark}
An open source Python implementation of OMoRF is available for public use from the \texttt{Effective Quadratures} package \citep{Seshadri2017}.
\end{remark}

\begin{remark}
It is assumed in Algorithm \ref{alg:OMoRF} that the solution to the trust region subproblem \eqref{eq:Subproblem} results in a step which satisfies the sufficient decrease condition 
\begin{equation}
m_k(\mathbf{U}_k^T \mathbf{x}_k) - m_k(\mathbf{U}_k^T ( \mathbf{x}_k + \mathbf{s}_k ) ) \geq c_1 \| \mathbf{g}_k \| \min \left\lbrace \Delta_k, \frac{\| \mathbf{g}_k \|}{\| \mathbf{H}_k \|} \right\rbrace
\end{equation}
where $c_1 \in \left( 0, \frac{1}{2} \right]$ is a constant and $\mathbf{g}_k$, $\mathbf{H}_k$ are the gradient and Hessian of $m_k$ at $\mathbf{x}_k$, respectively.
\end{remark}

\begin{remark}
Just as in UOBYQA, NEWUOA and BOBYQA, two trust region radii $\Delta_k$ and $\rho_k$ are maintained. However, unlike those algorithms, $\rho_k$ is not explicitly used to detach control of the sampling region from $\Delta_k$. Rather, $\rho_k$ is used as a lower bound when decreasing $\Delta_k$, preventing the trust region from shrinking too quickly before the model is sufficiently `good'. This is the same approach as used in other similar algorithms \citep{Cartis2019a, Cartis2019b}.
\end{remark}

\begin{remark}
Convergence of many DFTR algorithms is generally dependent on a so-called \emph{criticality step} \citep{Conn2009a}. During this step, the accuracy of the model $m_k$ is ensured whenever its gradient is sufficiently small. In Algorithm \ref{alg:OMoRF}, this has been replaced by a \emph{safety step} (Lines 9--11), as is done in Powell's algorithms. During this safety step, a check is performed on the step $\mathbf{s}_k$ to ensure it is sufficiently large before evaluating the candidate solution $\mathbf{x}_k + \mathbf{s}_k$. If it is not, then the accuracy of $m_k$ is improved. This check can be seen as an analogue of the criticality step, as discussed in \cite{Conn2009} (see Section 11.3).
\end{remark}

\subsection{Interpolation set management}
\label{sec:interpolation_sets}

In Section \ref{sec:fully_linear_RFs}, it was shown that the accuracy of a ridge function model $m_k$ is dependent on two sources of error: information loss by projecting onto a subspace $\mathbf{U}_k$ and the response surface error of $m_k$. Ideally, a single interpolation set could be maintained which could be improved to reduce both sources of error. Unfortunately, such an approach would require \emph{a priori} knowledge of the subspace $\mathbf{U}_k$. Therefore, OMoRF maintains two separate interpolation sets: $\mathcal{X}_k^{sub}$ of $n+1$ samples for calculating $\mathbf{U}_k$, and $\mathcal{X}_k^{int}$ of $\frac{1}{2}(d+1)(d+2)$ samples for calculating the coefficients of $m_k$. 

The set $\mathcal{X}_k^{sub}$ is used to construct the subspace $\mathbf{U}_k$ using either derivative-free active subspaces or polynomial ridge approximation. In either case, the first step is to build a fully linear $n$-dimensional linear interpolator $\hat{f}$ \eqref{eq:global_linear}. In the case of derivative-free active subspaces, $\mathbf{U}_k$ is simply the 1-dimensional active subspace \eqref{eq:global_linear_AS}. If a greater dimensionality is required, $\mathcal{X}_k^{sub}$ is used to solve the Grassmann manifold optimization problem \eqref{eq:nonlinearslq}. Note, solving \eqref{eq:nonlinearslq} requires an initial guess for $\mathbf{U}_k$. In OMoRF, the approximate 1-dimensional subspace \eqref{eq:global_linear_AS}, appended with its orthogonal complement, is used as the initial point for the manifold optimization problem \eqref{eq:nonlinearslq}. Note, although the solution to problem \eqref{eq:nonlinearslq} could also be used in the case of $d=1$, it was found that the 1-dimensional active subspace \eqref{eq:global_linear_AS} generally gave superior algorithmic performance. Therefore, this method is only employed in the case where higher dimensions are desired, e.g. when it is believed that a 1-dimensional subspace insufficiently describes the underlying problem dimension.

Once the subspace $\mathbf{U}_k$ is known, one may be tempted to use the points $\mathcal{Y}_k^{sub} = \{ \mathbf{U}_k^T \mathbf{x}^i \mid \mathbf{x}^i \in \mathcal{X}_k^{sub} \}$ to calculate the coefficients of $m_k$. However, these projected samples generally insufficiently span the $d$-dimensional projected space, leading to poor surrogate models. Provided $d \ll n$, determining a more suitable set of $\frac{1}{2}(d+1)(d+2)$ samples $\mathcal{X}_k^{int}$ is not only relatively cheap, but can also dramatically improve the quality of the ridge function surrogate. Figure \ref{fig:InterpolationSets} provides an example of the 10-dimensional Styblinski-Tang function
\begin{equation}
\label{eq:ST}
f(\mathbf{x}) = \sum_{i=1}^{10} 0.5 \left( x_i^4 - 16 x_i^2 + 5 x_i \right)
\end{equation}
projected onto a 2-dimensional subspace. From this figure it is clear that, although the set $\mathcal{X}_k^{sub}$ may be well suited for linear interpolation in 10 dimensions, the projected set $\mathcal{Y}_k^{sub}$ does not span the 2-dimensional space very well. In contrast, the projected set $\mathcal{Y}_k^{int} = \{ \mathbf{U}_k^T\mathbf{x}^i \mid \mathbf{x}^i \in \mathcal{X}_k^{int} \}$ effectively spans this space, which in turn gives a much more accurate ridge function model. To demonstrate this increase in accuracy, $N = 100,000$ samples $\hat{\mathbf{x}}_i$ were drawn at random from a uniform distribution bounded by the trust region domain. From these samples, the coefficient of determination
\begin{equation}
R^2 = 1 - \frac{SSR}{SST}
\end{equation}
where 
\begin{equation*}
SSR = \sum_{i=1}^{N} (f(\hat{\mathbf{x}}_i) - m(\mathbf{U}_k^T \hat{\mathbf{x}}_i))^2, \qquad SST = \sum_{i=1}^{N} (f(\hat{\mathbf{x}}_i) - \bar{f}).
\end{equation*}
and $\bar{f}= \frac{1}{N}\sum_{i=1}^{N} f(\hat{\mathbf{x}}_i)$, was calculated for both of these models. The $R^2$ values for the ridge function models constructed from $\mathcal{Y}_k^{sub}$ and $\mathcal{Y}_k^{int}$ can be seen in Figure \ref{fig:InterpolationSets}.

\begin{figure}[h]
\begin{center}
\begin{subfigmatrix}{1}
\subfigure[$R^2 = 0.403$]{\includegraphics[width=6cm]{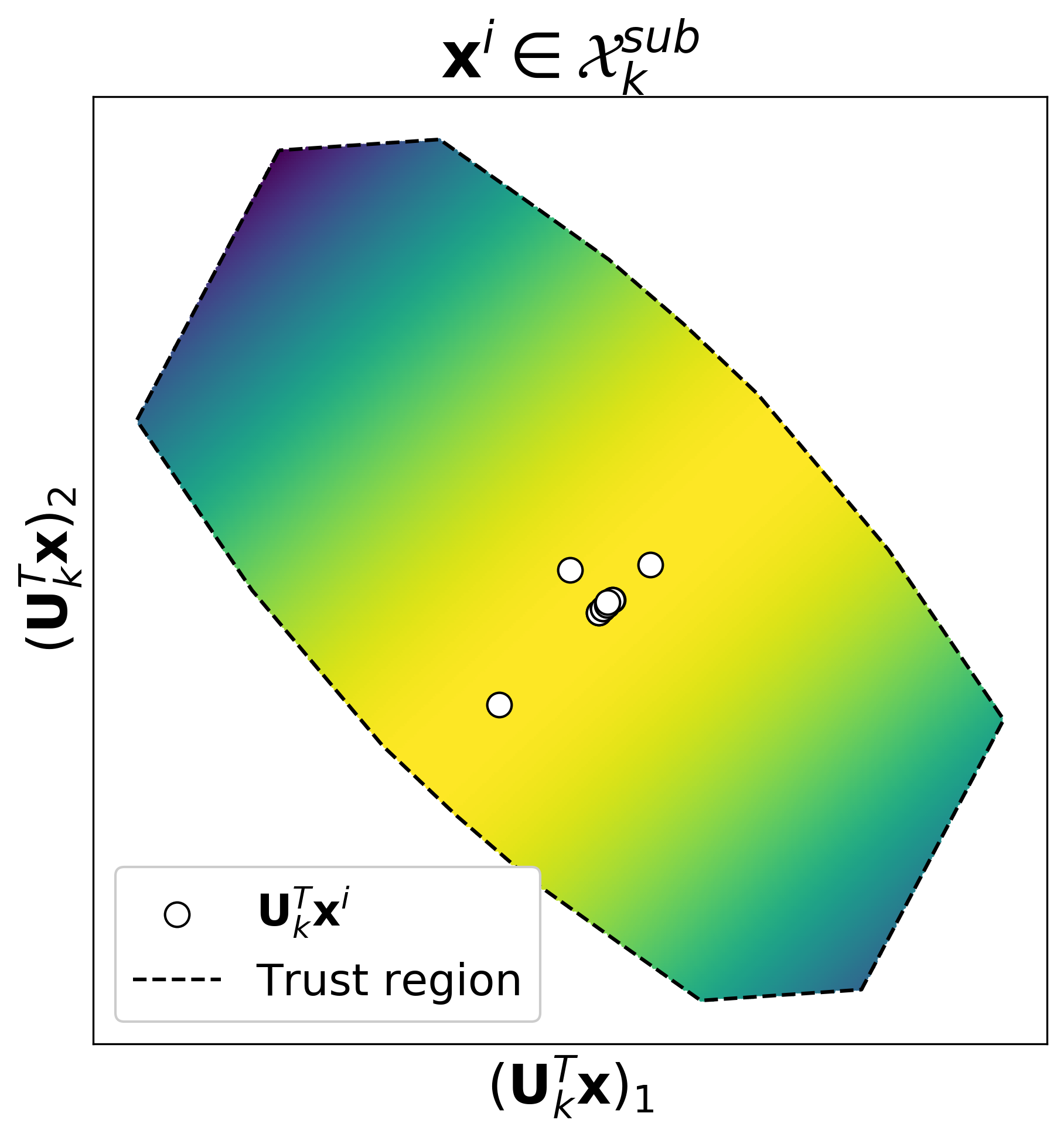}\label{fig:x}} 
\subfigure[$R^2 = 0.967$]{\includegraphics[width=6cm]{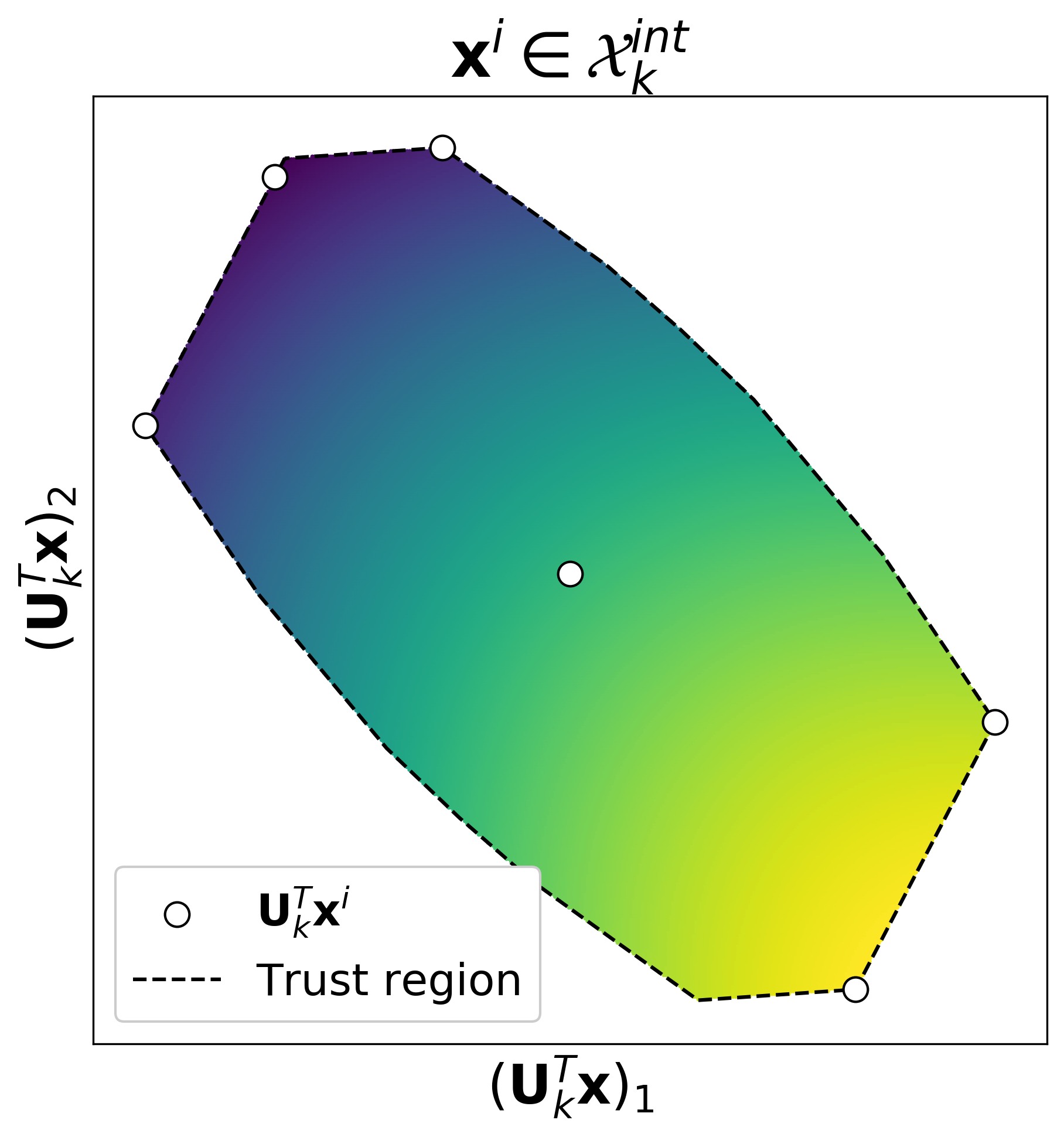}\label{fig:xhat}}
\end{subfigmatrix}
\end{center}
\caption{Contours of the 2-dimensional ridge function surrogate models $m_k$ for the 10-dimensional Styblinski-Tang function \eqref{eq:ST} constructed from (a) a set $\mathcal{Y}_k^{sub}$ and (b) a set $\mathcal{Y}_k^{int}$.}
\label{fig:InterpolationSets}
\end{figure}

\subsection{Interpolation set updates}
\label{sec:model_updating}

Algorithm \ref{alg:OMoRF} always includes new sample points as they become available by appending $\mathbf{x}_k + \mathbf{s}_k$ to the interpolation sets $\mathcal{X}_k^{sub}$ and $\mathcal{X}_k^{int}$. When the iterate is successful, i.e. $r_k \geq \eta_1$, Algorithm \ref{alg:incremental_improvements} is invoked for both sample sets to choose a point to be replaced for each set before continuing the iteration. Note, new sample points are not calculated during this call of Algorithm \ref{alg:incremental_improvements}. When the iterate is not successful, it is necessary to ensure the accuracy of the model before reducing the trust region radius $\Delta_k$. In this case, not only is the previously calculated sample added, but another new geometry-improving point is determined and subsequently added. Determining whether or not these sets need to be improved is done by checking the maximum distance of the samples to the current iterate. If this distance is too large, it indicates the interpolation set has not been updated recently, so it may need improvement. The full details of this process are provided in Algorithm \ref{alg:model_improvement}.

\begin{algorithm}[h]
  	\caption{Interpolation set update for OMoRF}
  	\begin{algorithmic}[1]
  		\State Let $\mathbf{x}_k$ be the current iterate, $\mathcal{X}_k^{sub}$ be a set of at least $n+1$ samples, $\mathcal{X}_k^{int}$ be a set of at least $\frac{1}{2}(d+1)(d+2)$ samples, $\mathbf{U}_k$ be the current subspace, and both $\Delta_{k}$ and $\rho_k$ be given.
  		\State Set values of algorithmic parameters $0 < \alpha_1 < \alpha_2 <1$, $\epsilon_k > 0$
      	\If{$\max  \| \mathbf{x}^i - \mathbf{x}_k \| > \epsilon_k$ for $\mathbf{x}^i \in \mathcal{X}_k^{int}$}
      		\State Invoke Algorithm \ref{alg:incremental_improvements} to improve $\mathcal{X}_{k+1}^{int}$ by finding a new sample point and set $\mathcal{X}_{k+1}^{sub} = \mathcal{X}_k^{sub} $.
      		\State Set $\mathbf{U}_{k+1} = \mathbf{U}_k$, $\rho_{k+1} =\rho_k$, and $\Delta_{k+1} = \Delta_k$.
      	\ElsIf{$\max  \| \mathbf{x}^i - \mathbf{x}_k \| > \epsilon_k$ for $\mathbf{x}^i \in \mathcal{X}_k^{sub}$}
      		\State Invoke Algorithm \ref{alg:incremental_improvements} to improve $\mathcal{X}_{k+1}^{sub}$ by finding a new sample point and set $\mathcal{X}_{k+1}^{int} = \mathcal{X}_{k}^{int}$.
      		\State Construct $\mathbf{U}_{k+1}$ with points $\mathcal{X}_{k+1}^{sub}$ using \eqref{eq:global_linear_AS} if $d=1$ or \eqref{eq:nonlinearslq} if $d>1$, set $\rho_{k+1} =\rho_k$ 	and $\Delta_{k+1} = \Delta_k$.
      	\Else
      		\State Set $\mathcal{X}_{k+1}^{sub} = \mathcal{X}_k^{sub} $ and $\mathcal{X}_{k+1}^{int} = \mathcal{X}_{k}^{int}$.
      		\State Set $\mathbf{U}_{k+1} = \mathbf{U}_k$, and if $\Delta_{k+1} = \rho_k$, set $\rho_{k+1} =\alpha_1 \rho_k$ and $\Delta_{k+1} = \alpha_2 \Delta_k$, otherwise set $\rho_{k+1} = \rho_k$ and $\Delta_{k+1} = \Delta_k$.
      	\EndIf \\
      	\Return $\mathcal{X}_{k+1}^{int}$, $\mathcal{X}_{k+1}^{sub}$, $\mathbf{U}_{k+1}$, $\rho_{k+1}$, and $\Delta_{k+1}$
\end{algorithmic}
\label{alg:model_improvement}
\end{algorithm}

There are a few points to note about Algorithm \ref{alg:model_improvement}. First, although other conditions may be used as a measure of the quality of an interpolation set, the maximum distance of the samples to the current iterate gives a quick and simple means of determining whether or not to improve the interpolation set. Similar approaches have been successfully applied in other DFTR methods \citep{Fasano2009, Bandeira2012, Cartis2019b}. Second, a pivotal algorithm, which has been modified from Algorithm 6.6 in \cite{Conn2009}, has been used both to choose points to be replaced and calculate new geometry-improving points. The details of this modified algorithm are given in Algorithm \ref{alg:incremental_improvements} in Appendix \ref{app:pivotal_algorithm}. Third, to reduce the computational burden of each iteration, only a single geometry-improving sample point is calculated during calls of Algorithm \ref{alg:incremental_improvements}. The point which is replaced by this new sample point is also determined using Algorithm \ref{alg:incremental_improvements}. Fourth, improvements to $\mathcal{X}_k^{int}$ are prioritized over $\mathcal{X}_k^{sub}$. This is because $\mathcal{X}_k^{int}$ generally has significantly fewer samples than $\mathcal{X}_k^{sub}$, so $\mathcal{X}_k^{int}$ can be updated more rapidly than $\mathcal{X}_k^{sub}$. If all of the points in $\mathcal{X}_k^{int}$ are sufficiently close to the current iterate $\mathbf{x}_k$, this indicates that $\mathcal{X}_k^{int}$ has been recently improved. In these cases, if the model $m_k$ needs improving, it may be because the subspace $\mathbf{U}_k$ needs to be updated. Finally, a new subspace $\mathbf{U}_{k+1}$ is calculated whenever the geometry of $\mathcal{X}_k^{sub}$ is improved. This is because improving the geometry of $\mathcal{X}_k^{sub}$ improves the quality of the linear interpolator $\hat{f}$ \eqref{eq:global_linear}, which, by Lemma \ref{lemma:approxAS}, leads to a more accurate covariance matrix \eqref{eq:approximate_covariance}. Therefore, improving $\mathcal{X}_k^{sub}$ before calculating $\mathbf{U}_{k+1}$ potentially allows the algorithm to find a more suitable dimension-reducing subspace.

\subsection{Choice of norm and extending for bound constraints}
\label{sec:BOMoRF}

Unlike most trust region algorithms, the infinity norm $\| \cdot \|_{\infty}$ is used in this implementation of OMoRF. Although the more common choice of the Euclidean norm $\| \cdot \|_2$ would also be suitable, this choice was made in order to simplify the extension of OMoRF to the bound-constrained optimization problem
\begin{equation}
\label{eq:bound_constrained_opt}
\begin{split}
\min_{\mathbf{x} \in \mathbb{R}^n} \quad & f(\mathbf{x}) \\
\text{subject to} \quad & \mathbf{a} \leq \mathbf{x} \leq \mathbf{b}.
\end{split}
\end{equation}
To see how this choice simplifies matters, note that $\| \mathbf{x} - \mathbf{x}_k \|_{\infty} \leq \Delta_k$ is equivalent to 
\begin{equation*}
\mathbf{x}_k-\mathbf{\Delta_k} \leq \mathbf{x} \leq \mathbf{x}_k + \mathbf{\Delta_k},
\end{equation*}
where $\mathbf{\Delta_k}$ is an $n$-dimensional vector of ones multiplied by $\Delta_k$. The feasible region at iteration $k$ is then simply the intersection of 
\begin{equation*}
\| \mathbf{x} - \mathbf{x}_k \|_{\infty} \leq \Delta_k \quad \text{and} \quad \mathbf{a} \leq \mathbf{x} \leq \mathbf{b}.
\end{equation*}
To simplify, one may write this feasible region as $\mathbf{l} \leq \mathbf{x} \leq \mathbf{u}$, where 
\begin{equation*}
l_i \coloneqq \max((\mathbf{x}_k-\mathbf{\Delta_k})_i, a_i) \qquad \text{and} \qquad  u_i \coloneqq \min((\mathbf{x}_k+\mathbf{\Delta_k})_i, b_i)
\end{equation*}
for $i=1,\dots,n$. 

In the case of a Euclidean norm trust region, the feasible region is the intersection of 
\begin{equation*}
\| \mathbf{x} - \mathbf{x}_k \|_2 \leq \Delta_k \quad \text{and} \quad \mathbf{a} \leq \mathbf{x} \leq \mathbf{b}.
\end{equation*}
The shape of this region does not lend itself to a simple formulation, so working with the Euclidean norm may be more cumbersome in the case of bound-constrained optimization problems. Note, some methods, such as BOBYQA \citep{Powell2009}, handle the awkward shape of the feasible region by projecting the step obtained from a Euclidean trust region onto the hyperrectangle.

\section{Numerical results}
\label{sec:results}

The performance of OMoRF has been tested against three well-known DFO algorithms: COBYLA, BOBYQA, and Nelder-Mead \citep{Nelder1965}. The \texttt{Effective Quadratures} \citep{Seshadri2017} implementation was used for OMoRF, \texttt{SciPy} \citep{Virtanen2020} was used for COBYLA, \texttt{Py-BOBYQA} \citep{Cartis2019a} was used for BOBYQA, and \texttt{NLopt} \citep{Johnson} was used for Nelder-Mead. For BOBYQA, two variants, one with the minimum of $n+2$ interpolation points and another with the default of $2n+1$, were tested. All of the tested algorithms were provided the same initial starting point $\mathbf{x}_0$ and arbitrarily chosen characteristic length $\Delta_0$. In the case of unconstrained problems, a value of $\Delta_0 = 0.1 \max(\| \mathbf{x}_0 \|_{\infty}, 1)$ was used, while $\Delta_0 = 0.1 \min(\max(\| \mathbf{x}_0 \|_{\infty}, 1), \| \mathbf{b}-\mathbf{a} \|_{\infty})$ was used for bound-constrained problems. To force the solvers to use all of the available computational budget, the convergence criterion was set to a value of $10^{-16}$ such that it was generally not reached. For OMoRF, the following parameter values were used: $\gamma_1 = 0.5$, $\gamma_2 = 2.0$, $\gamma_3=2.5$, $\eta_1=0.1$, $\eta_2=0.7$, $\alpha_1=0.1$, $\alpha_2=0.5$, $\epsilon_k=\max(2\Delta_k, 10\rho_k)$,  $\gamma_s=0.5$, and $\omega_s=0.5$. 

\subsection{Testing methodology}

Performance and data profiles \citep{More2009} have been used for comparing these algorithms on many of the following test problems. These profiles are defined in terms of three characteristics: the set of test problems $\mathcal{P}$, the set of algorithms tested $\mathcal{S}$, and a convergence test $\mathcal{T}$. Given the convergence test, a problem $p \in \mathcal{P}$ and a solver $s \in \mathcal{S}$, the number of function evaluations necessary to pass the convergence test $\mathcal{T}$ was used as a performance metric $t_{p,s}$. Moreover, the convergence test
\begin{equation}
f(\mathbf{x}) \leq f_L + \tau(f(\mathbf{x}_0) - f_L),
\end{equation}
where $\tau > 0$ is some tolerance, $\mathbf{x}_0$ is the starting point, and $f_L$ is the minimum attained value of $f$ for all solvers $\mathcal{S}$ within a given computational budget for problem $p$, was used.

The performance profile is defined as
\begin{equation}
\rho_s(\alpha) = \frac{1}{|\mathcal{P}|} \text{size} \left\lbrace p \in \mathcal{P} : \frac{t_{p,s}}{ \min \{ t_{p,s} : s\in \mathcal{S} \} } \leq \alpha \right\rbrace.
\end{equation}
In other words, $\rho_s(\alpha)$ is the proportion of problems in $\mathcal{P}$ in which solver $s \in \mathcal{S}$ attains a performance ratio of at most $\alpha$. In particular, $\rho_s(1)$ is the proportion of problems for which the solver performs the best for that particular convergence criterion $t_{p,s}$, and as $\alpha \rightarrow \infty$, $\rho_s(\alpha)$ represents the proportion of problems which can be solved within the computational budget. Data profiles are defined as
\begin{equation}
d_s(\alpha) = \frac{1}{|\mathcal{P}|} \text{size} \left\lbrace p \in \mathcal{P} : \frac{t_{p,s}}{n_p + 1} \leq \kappa \right\rbrace,
\end{equation}
where $n_p$ is the dimension of problem $p \in \mathcal{P}$. This represents the proportion of problems that can be solved --- measured by convergence criterion $t_{p,s}$ --- by a solver $s$ within $\kappa(n_p +1)$ function evaluations (or $\kappa$ simplex gradients).

\subsection{CUTEst problems}

The CUTEst \citep{Gould2015} test problem set was used to examine solver performance. From the set of unconstrained and bound-constrained optimization problems, two subsets were defined: 1) 40 problems of moderate dimension ($10 \leq n < 50$), and 2) 40 problems of high dimension ($50 \leq n \leq 100$). A full list of these problems may be found in Appendix \ref{app:moderate}. The focus of this article is on derivative-free optimization of computationally intensive functions where a strict computational budget may limit the number of function evaluations available to a solver. In order to simulate such an environment, a computational budget of $20$ simplex gradients (i.e. $20(n+1)$ function evaluations) was specified. These solvers were tested with a low accuracy requirement of $\tau = 10^{-1}$ and a high accuracy requirement of $\tau = 10^{-5}$. In these studies, two comparisons are made. Initially, four variants of OMoRF, with $d = 1, 2, 3, 4$, are compared. From this comparison, the best of these solvers is chosen for comparison with the other solvers. In this second comparison, the best solution per problem from the first comparison is retained, even if the solver has been eliminated. This approach has been taken in order to avoid performance profile crowding (see \cite{Gould2016}). Note, to further reduce the crowding effect, the results from BOBYQA with $n+2$ points have been omitted from the plots below. This is because this solver was generally significantly inferior to BOBYQA with $2n+1$ points.

\subsubsection{Moderate dimension problems}

The data and performance profiles for all tested variants of OMoRF for the test set of moderate dimension problems are shown in Figure \ref{fig:Cutest_Profiles_Medium_OMoRF}. It is clear that OMoRF $(d=1)$ significantly outperformed the other solvers. In fact, for the low accuracy requirement cases, this variant of OMoRF was the best solver for more than 90\% of the test problems. Although this dropped to 80\% in the high accuracy requirement, this was still significantly better than the other solvers. The solver with the next best performance, OMoRF $(d=2)$, was the quickest solver to reach convergence for only around 10\% of the problems for both the low and high accuracy requirements. Furthermore, it is clear that, for problems of moderate dimension, increasing $d$ can have a negative effect on the algorithmic performance. This is because, given a subspace $\mathbf{U}_k$, the number of points required to construct a quadratic ridge function is $\mathcal{O}(d^2)$. If $d$ is not significantly less than $n$, this requirement can be prohibitive. For example, when $d=4$, 15 samples are required to approximate the coefficients of $m_k$, provided the subspace is known. This means that if $n=10$, 26 points (11 points to construct $\mathbf{U}_k$) will be required to construct $m_k$. In comparison, only 14 points are required when $d=1$.

\begin{figure}[ht!]
\begin{center}
\begin{subfigmatrix}{2}
\subfigure[Data profile: $\tau = 10^{-1}$]{\includegraphics[width=6.5cm]{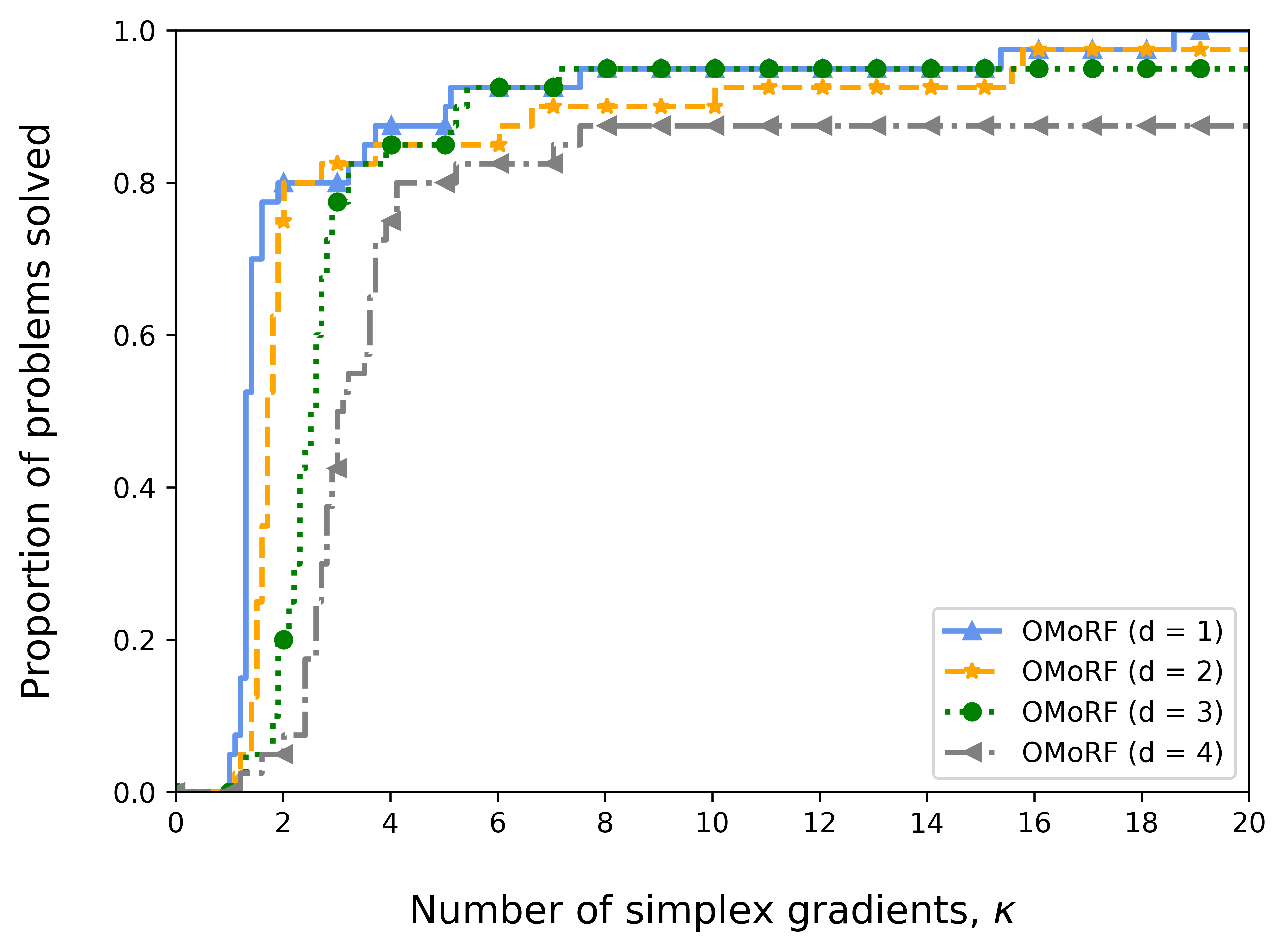}} 
\subfigure[Performance profile: $\tau = 10^{-1}$]{\includegraphics[width=6.5cm]{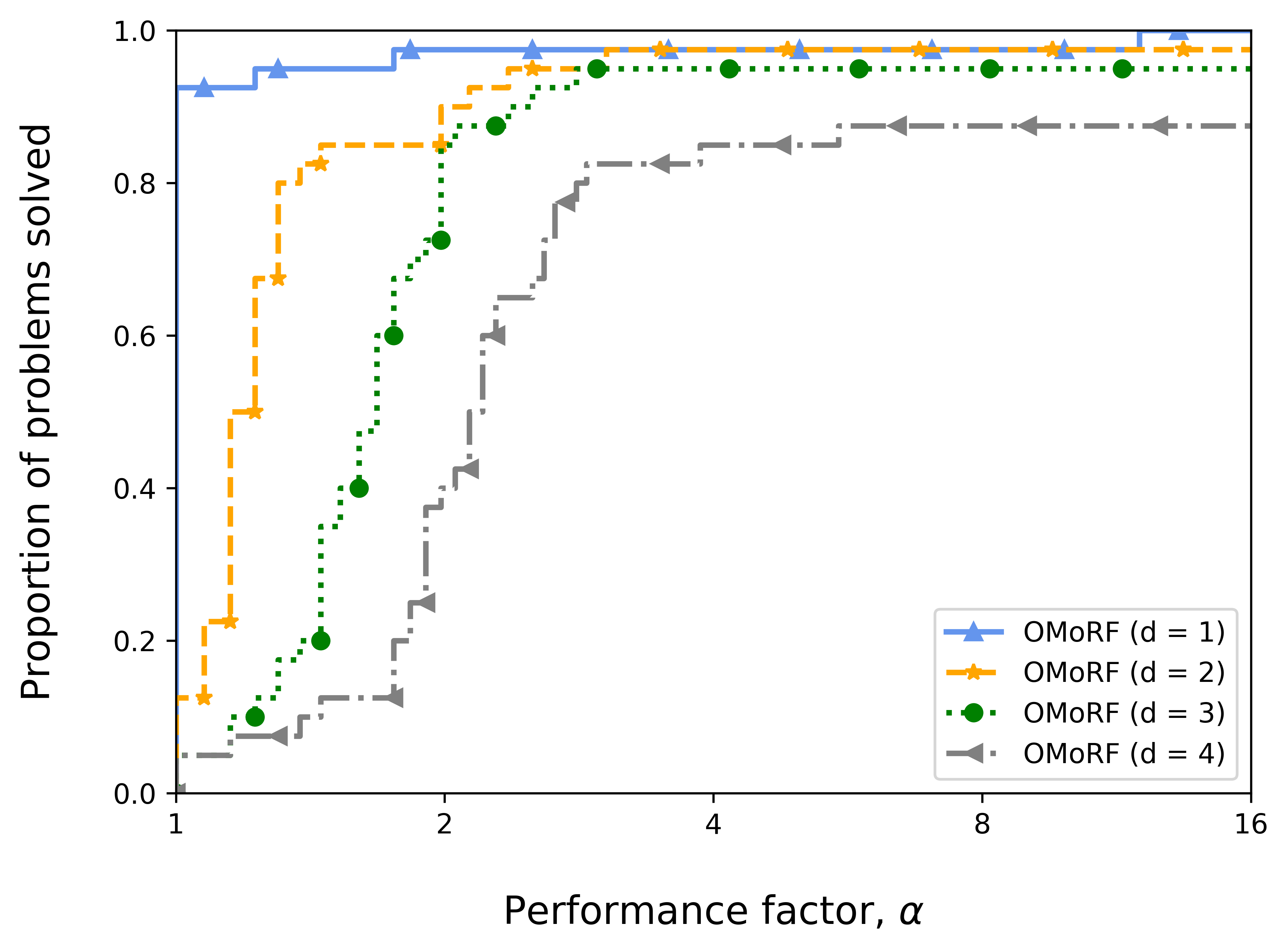}} 
\subfigure[Data profile: $\tau = 10^{-5}$]{\includegraphics[width=6.5cm]{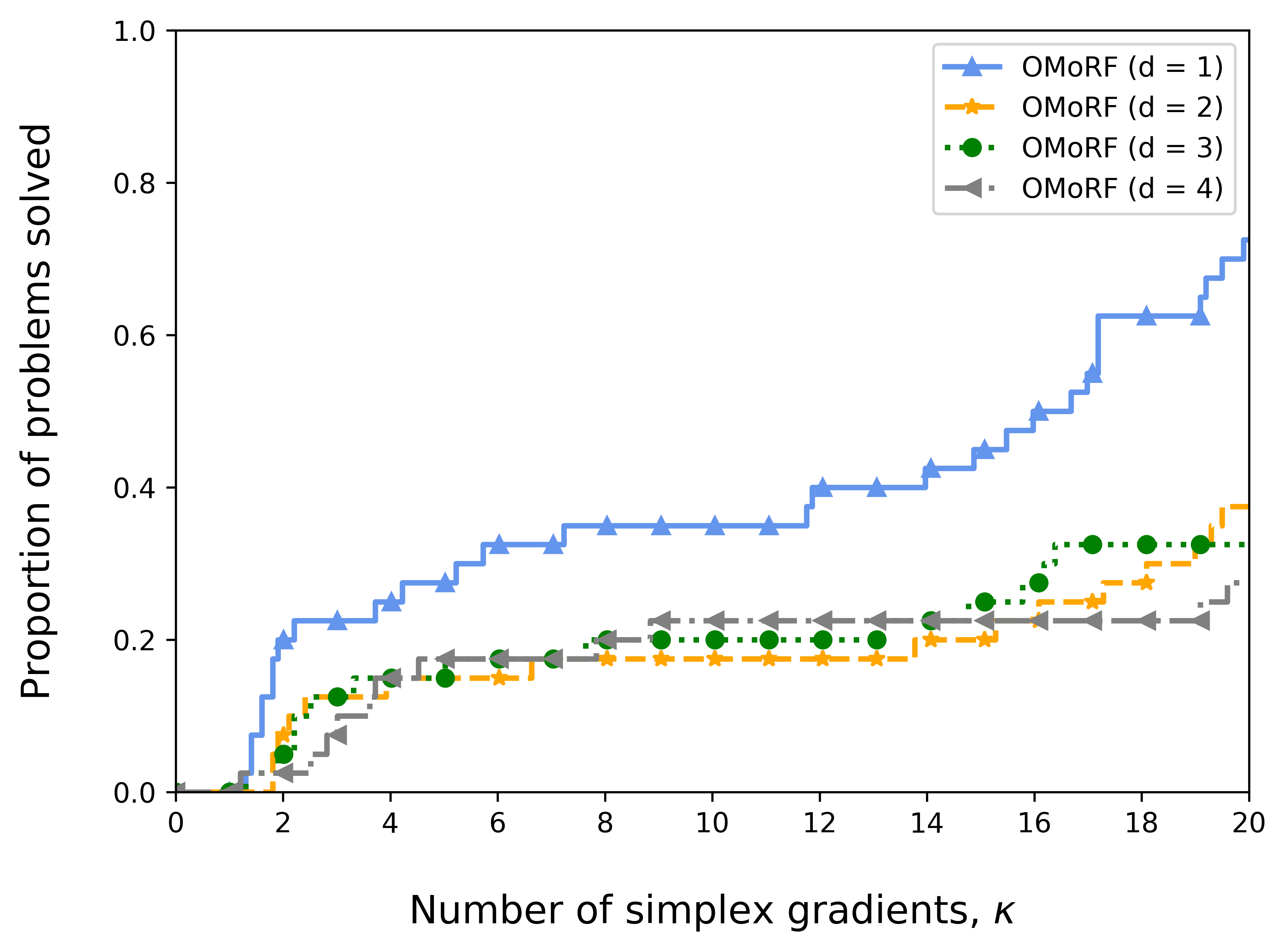}}
\subfigure[Performance profile: $\tau = 10^{-5}$]{\includegraphics[width=6.5cm]{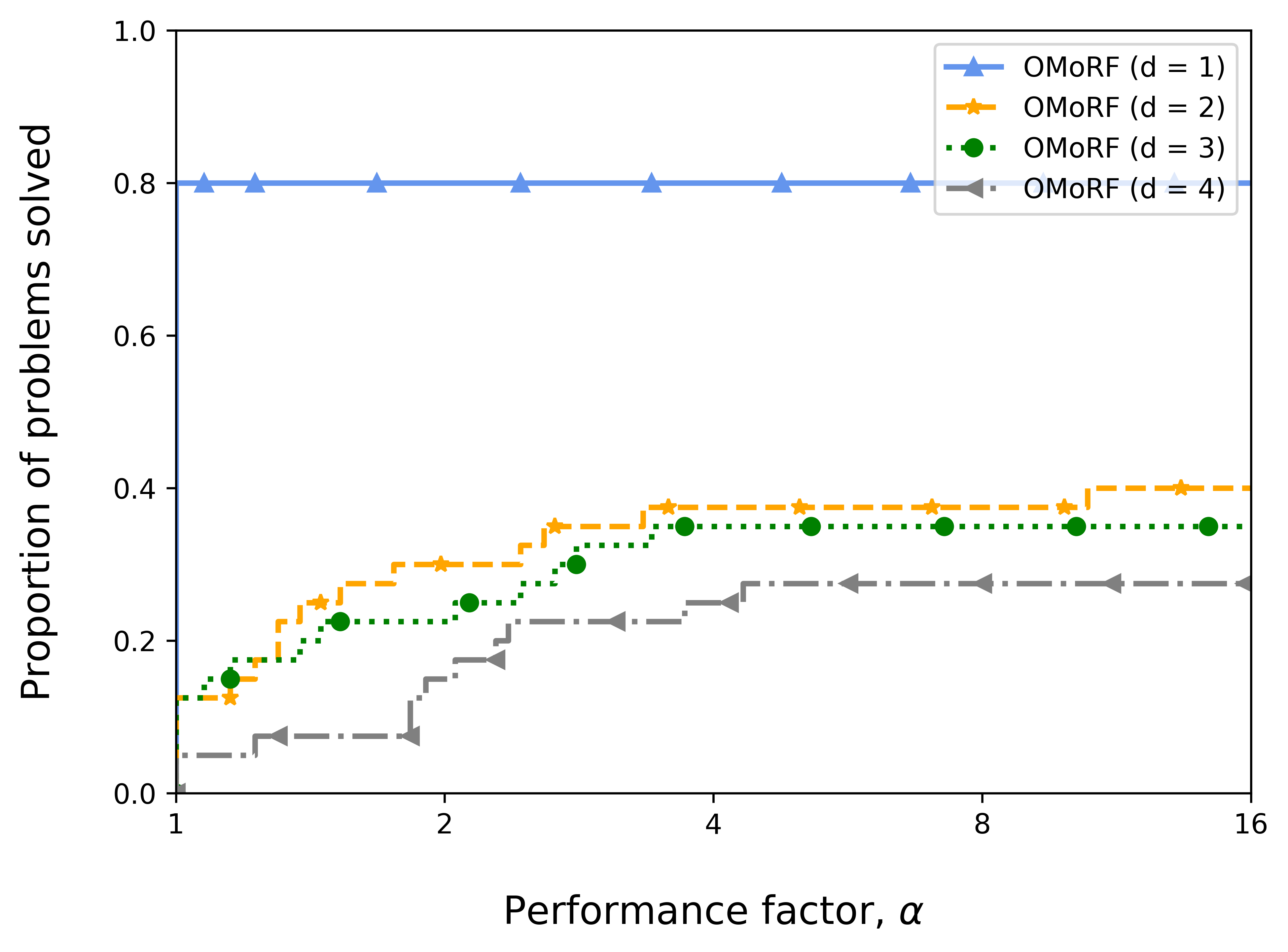}}
\end{subfigmatrix}
\end{center}
\caption{Data and performance profiles for OMoRF solvers for problems of moderate dimension from the CUTEst test set at $\tau = 10^{-1}$ and $\tau = 10^{-5}$.}
\label{fig:Cutest_Profiles_Medium_OMoRF}
\end{figure}

Due to the clear advantages in performance, OMoRF $(d=1)$ has been used for comparison with the other solvers. The data and performance profiles for OMoRF $(d=1)$, COBYLA, BOBYQA $(2n+1)$ and Nelder-Mead for the test set of moderate dimension problems are shown in Figure \ref{fig:Cutest_Profiles_Medium}. As previously mentioned, the data and performance profiles shown in Figure \ref{fig:Cutest_Profiles_Medium} used the minimum attained value $f_L$ from all solvers. On the other hand, Figure \ref{fig:Cutest_Profiles_Medium_OMoRF} only included the minimum attained value from the four variants of OMoRF. This explains the relative decrease in performance for OMoRF $(d=1)$ in Figure \ref{fig:Cutest_Profiles_Medium}. Nevertheless, for the problems in this test set, OMoRF was generally the quickest solver to achieve convergence at both the low and high accuracy requirement. From the performance profile, one can see that it was the first solver to converge for over 80\% of the problems for the low accuracy requirement and nearly 40\% for the high accuracy requirement. Additionally, OMoRF was able to make much quicker initial progress than the other methods, as demonstrated in the data profiles. In the case of the low accuracy requirement, nearly 80\% of the problems could be solved to convergence within 2 simplex gradients, compared to 5\% for COBYLA and 0\% for BOBYQA. This quick convergence is likely due to its ability to model functions with low-dimensional quadratics, allowing it to capture function curvature with significantly fewer samples. However, one point to note is that, as the number of function evaluations increased, BOBYQA was able to solve a larger proportion of the problems at the low accuracy requirement. This suggests that BOBYQA may be a slightly superior general-purpose solver when seeking low accuracy solutions. 

\begin{figure}[ht!]
\begin{center}
\begin{subfigmatrix}{2}
\subfigure[Data profile: $\tau = 10^{-1}$]{\includegraphics[width=6.5cm]{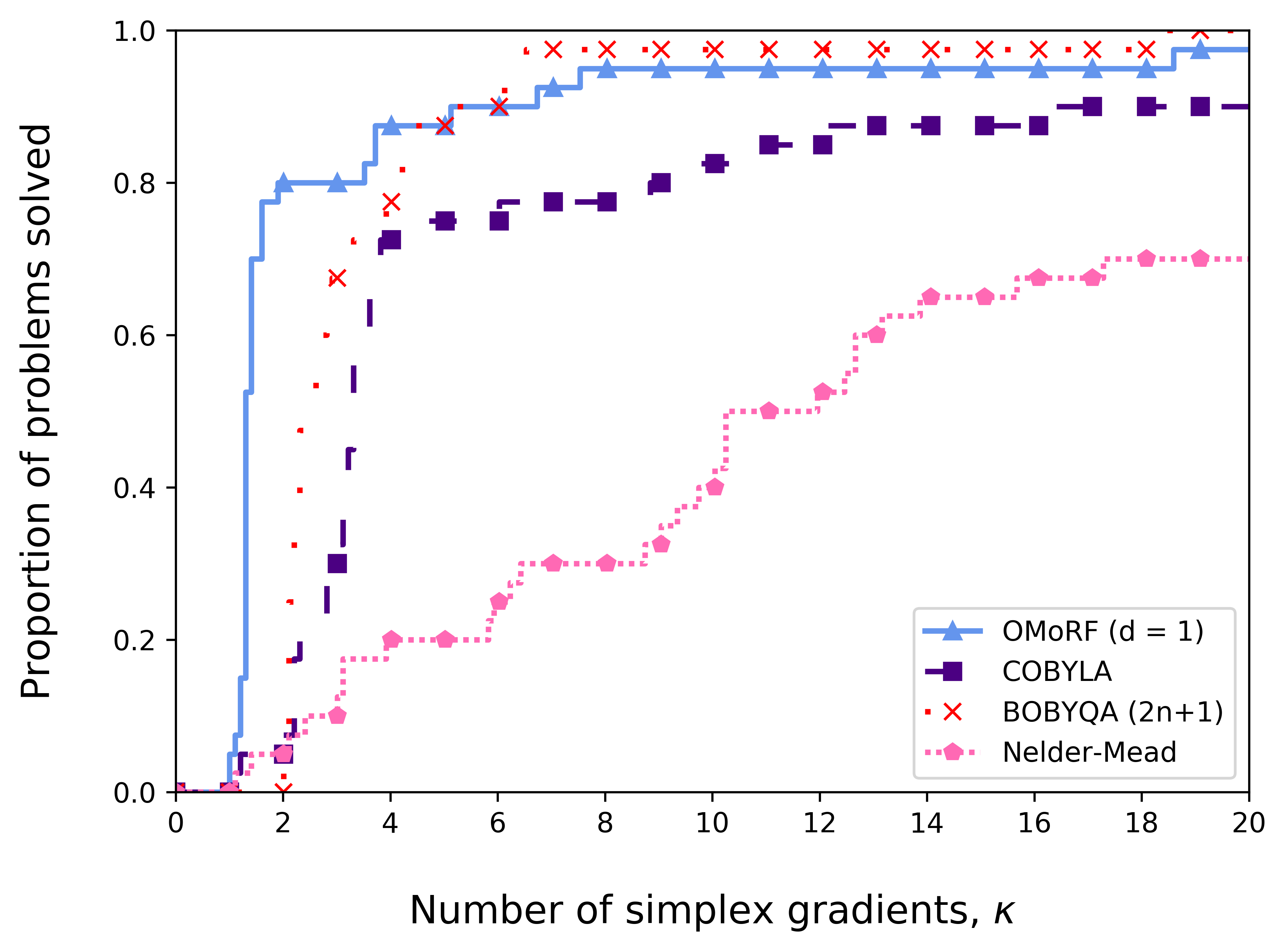}} 
\subfigure[Performance profile: $\tau = 10^{-1}$]{\includegraphics[width=6.5cm]{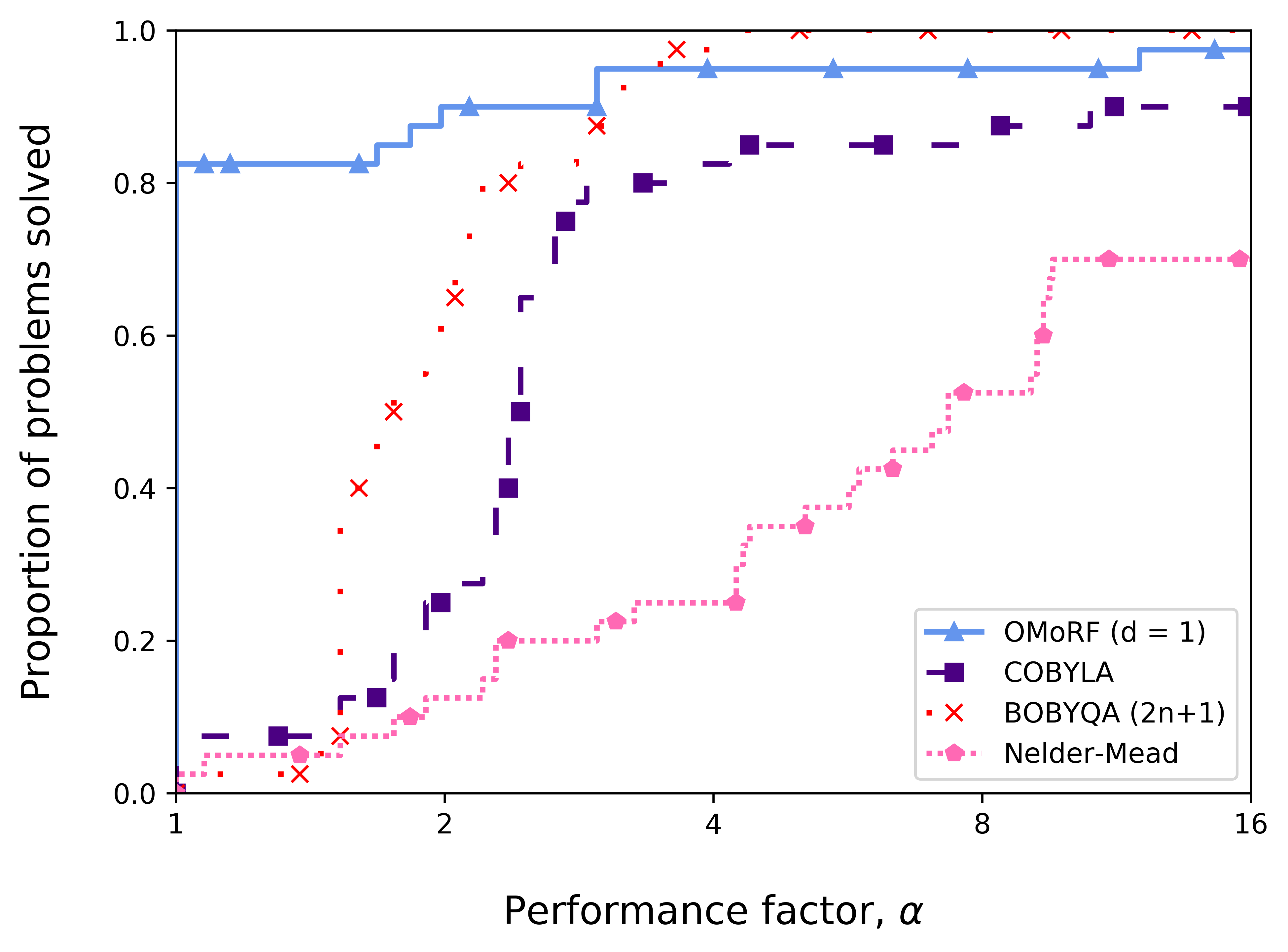}} 
\subfigure[Data profile: $\tau = 10^{-5}$]{\includegraphics[width=6.5cm]{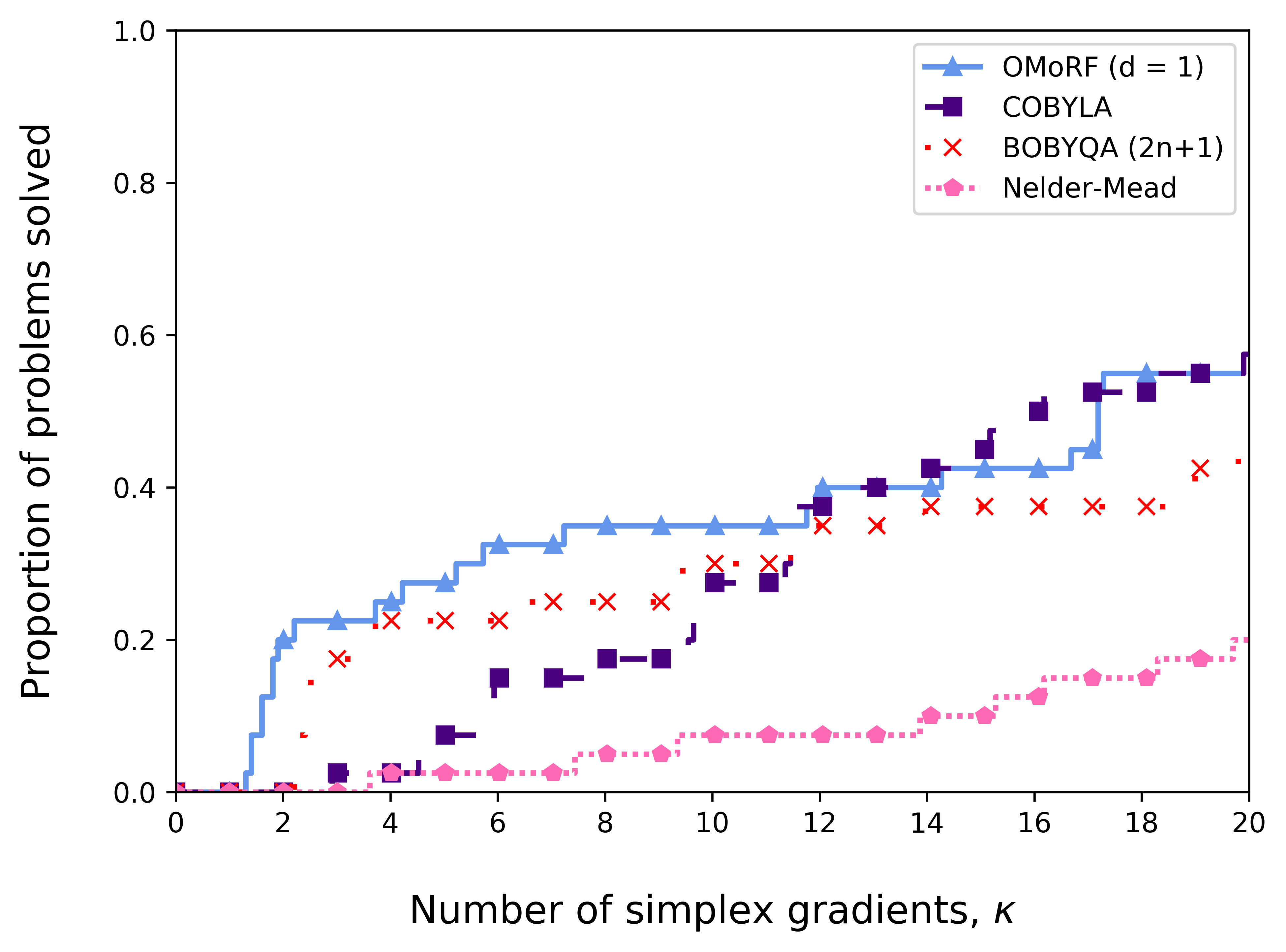}}
\subfigure[Performance profile: $\tau = 10^{-5}$]{\includegraphics[width=6.5cm]{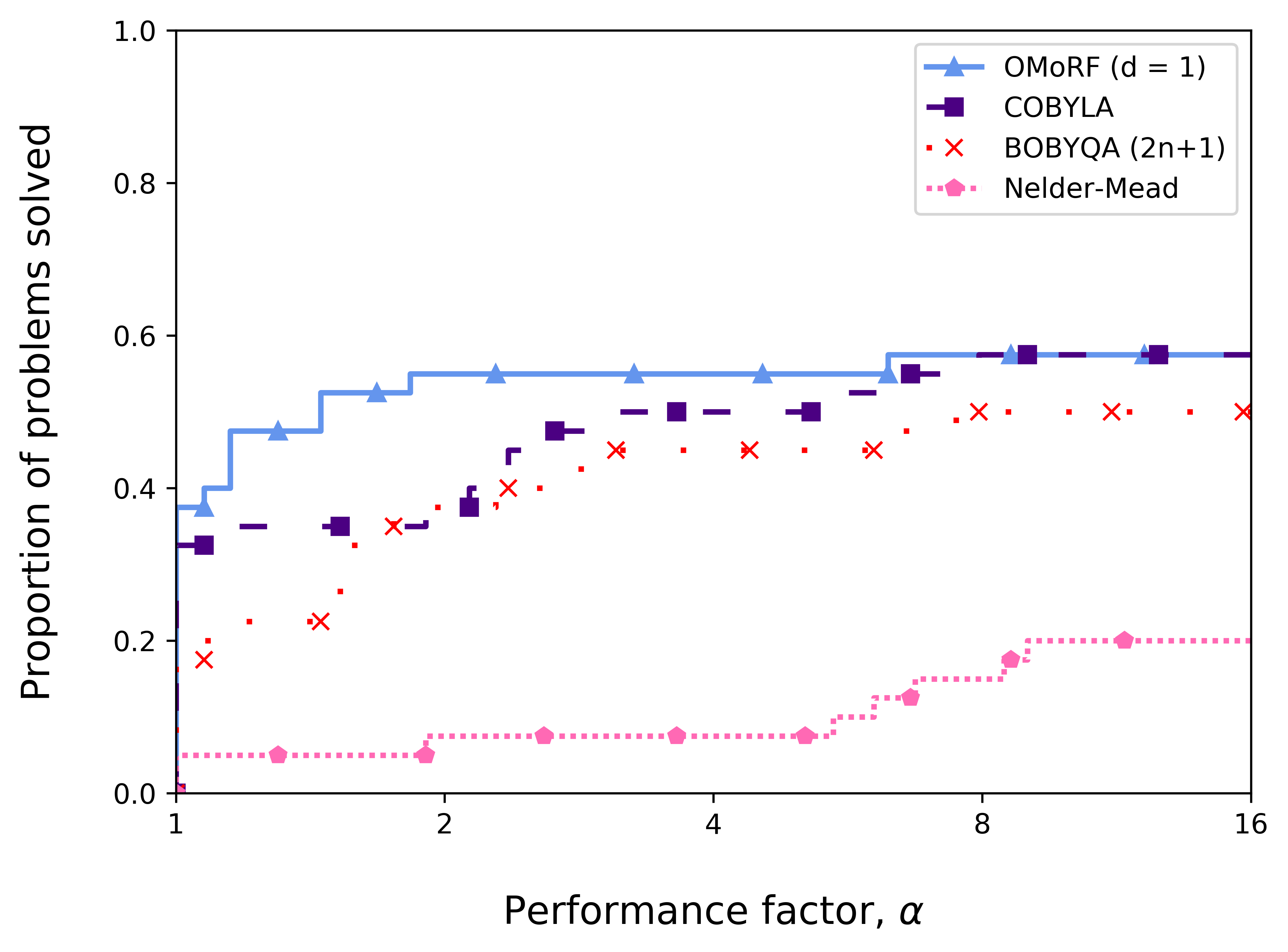}}
\end{subfigmatrix}
\end{center}
\caption{Data and performance profiles for problems of moderate dimension from the CUTEst test set at $\tau = 10^{-1}$ and $\tau = 10^{-5}$.}
\label{fig:Cutest_Profiles_Medium}
\end{figure}

\subsubsection{High dimension problems}

The data and performance profiles for all tested variants of OMoRF for the test set of high dimension problems are shown in Figure \ref{fig:Cutest_Profiles_Large_OMoRF}. Just as in the case problems of moderate dimension, OMoRF $(d=1)$ was generally the superior solver for this test set. In particular, it was the fastest solver to reach convergence at both the low and high accuracy requirements. Additionally, OMoRF $(d=1)$ achieved convergence to the high accuracy requirement more than any other solver tested, with it solving about 70\% of the problems using the full computational budget. Interestingly, the other variants of OMoRF were more competitive for the high-dimensional problems than the problems of moderate dimension, with all the solvers achieving convergence to the low accuracy requirement for more than 90\% of the problems. In particular, OMoRF $(d=2)$ was able to achieve convergence to the low accuracy requirement for approximately the same proportion of problems as OMoRF $(d=1)$. This relative performance increase is likely due to the fact that, as the dimension of the problem increases, the required $\mathcal{O}(d^2)$ samples needed to construct a quadratic ridge function becomes less restrictive. 

\begin{figure}[ht!]
\begin{center}
\begin{subfigmatrix}{2}
\subfigure[Data profile: $\tau = 10^{-1}$]{\includegraphics[width=6.5cm]{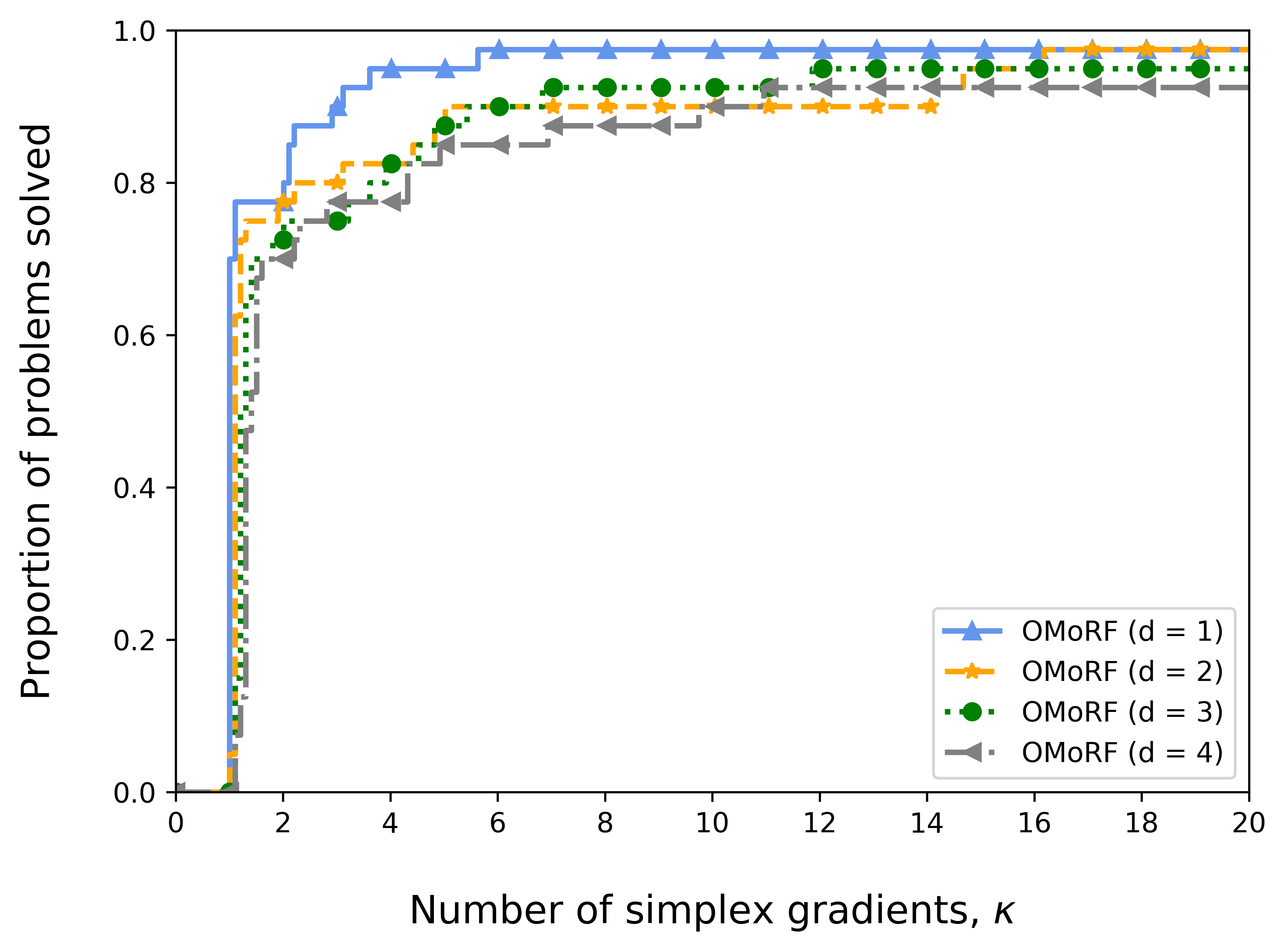}} 
\subfigure[Performance profile: $\tau = 10^{-1}$]{\includegraphics[width=6.5cm]{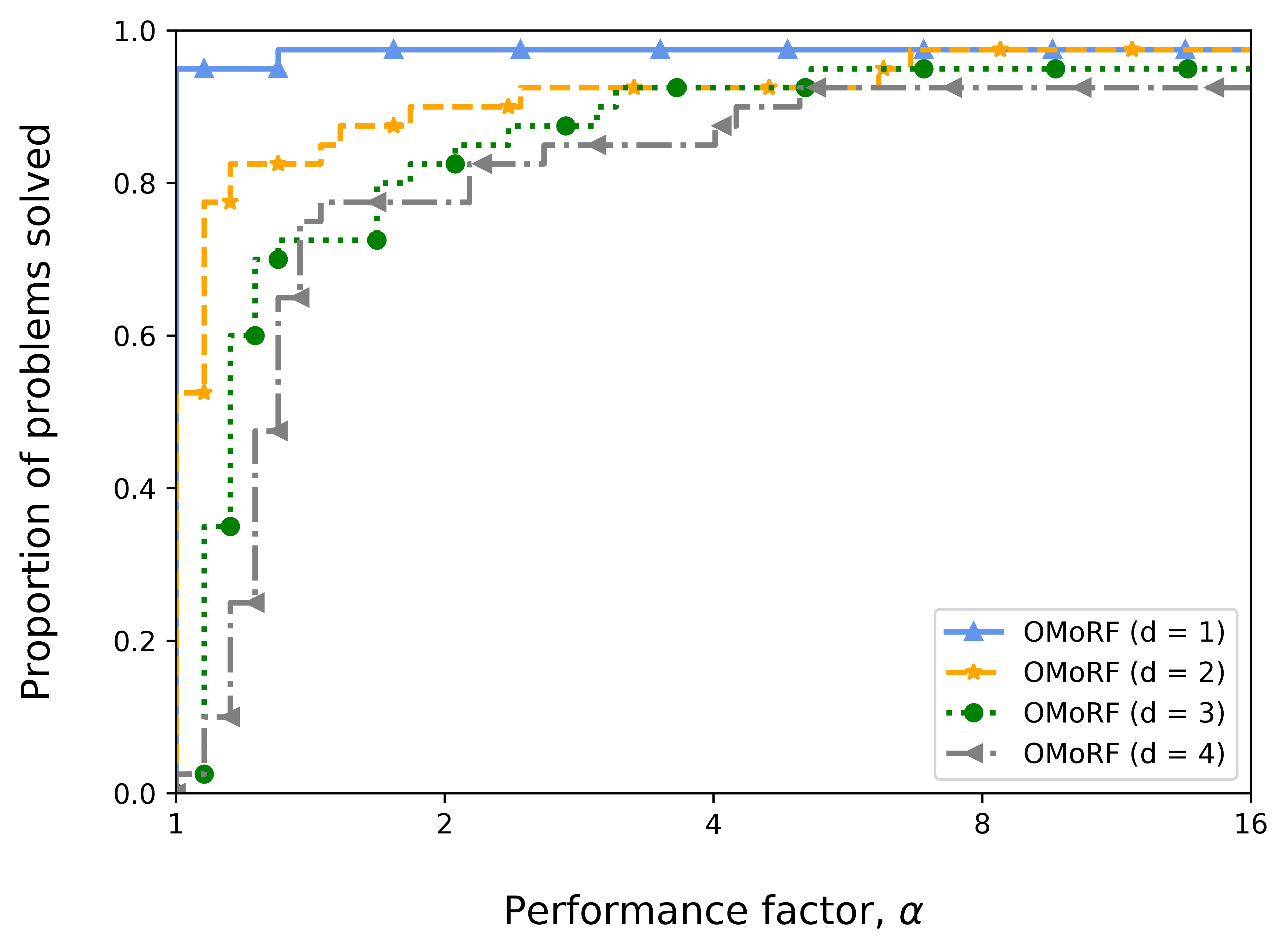}}
\subfigure[Data profile: $\tau = 10^{-5}$]{\includegraphics[width=6.5cm]{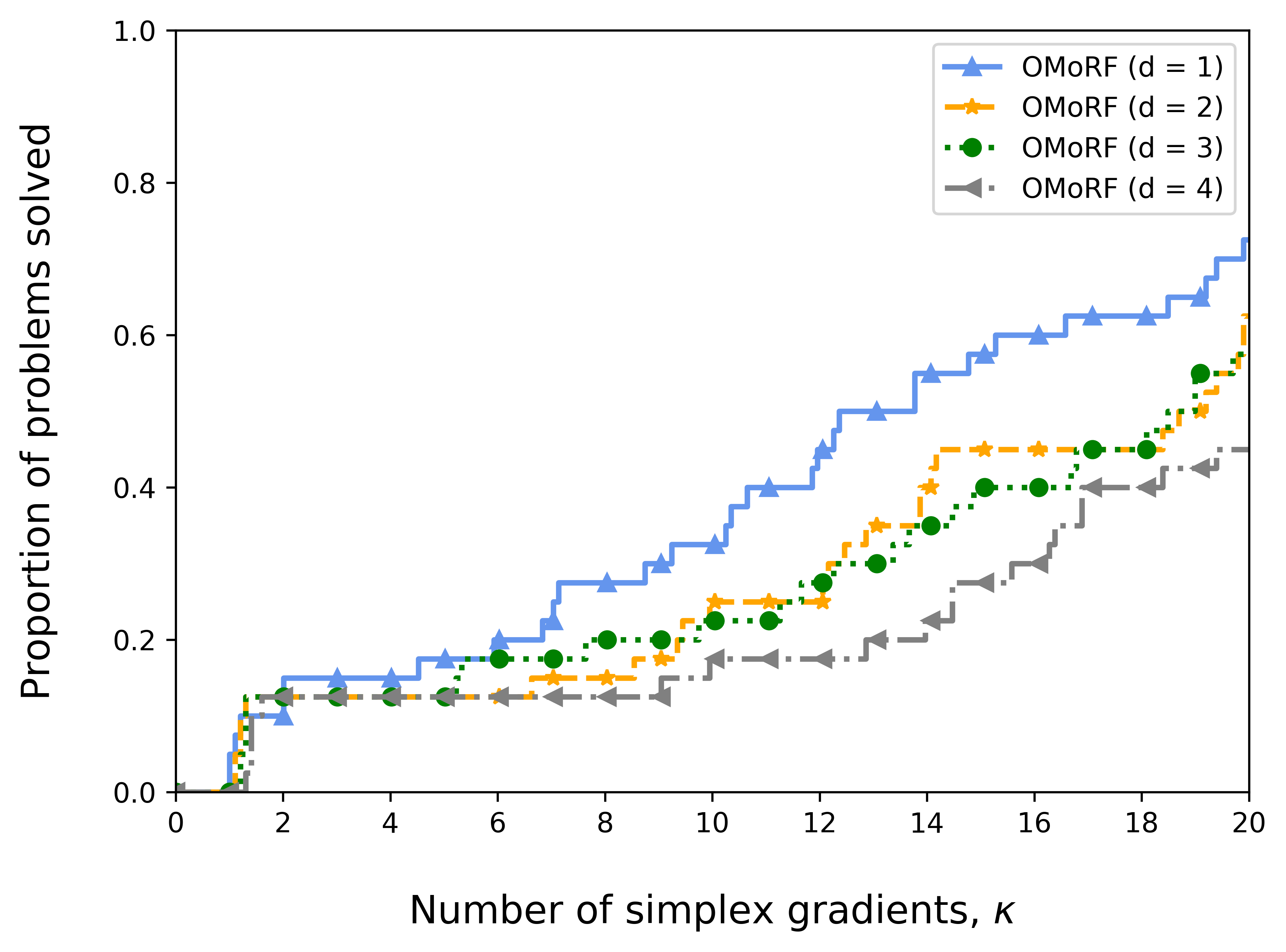}}
\subfigure[Performance profile: $\tau = 10^{-5}$]{\includegraphics[width=6.5cm]{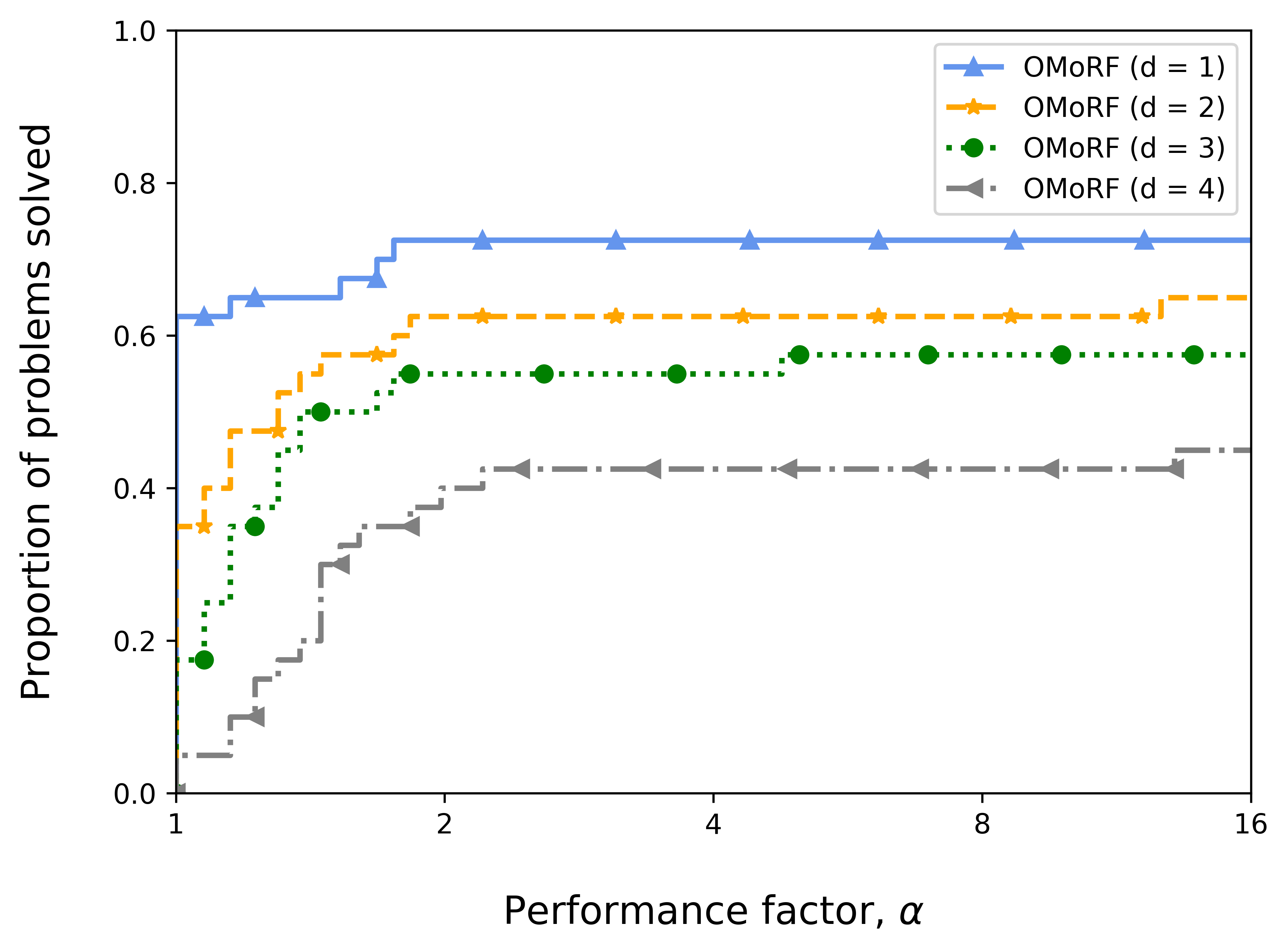}}
\end{subfigmatrix}
\end{center}
\caption{Data and performance profiles for OMoRF solvers for problems of high dimension from the CUTEst test set at $\tau = 10^{-1}$ and $\tau = 10^{-5}$.}
\label{fig:Cutest_Profiles_Large_OMoRF}
\end{figure}

Although the other variants of OMoRF were more competitive, OMoRF $(d=1)$ was still generally the best performing solver, so this variant was again used for comparison with the other solvers. The data and performance profiles for OMoRF $(d=1)$, COBYLA, BOBYQA $(2n+1)$ and Nelder-Mead for the test set of high dimension problems are shown in Figure \ref{fig:Cutest_Profiles_Large}. For this set, the superiority of the OMoRF solver over the other algorithms is even more apparent, with it being the fastest solver to achieve convergence for around 90\% of the problems at the low accuracy requirement and approximately 45\% for the high accuracy requirement. Although BOBYQA still achieved convergence for a slightly larger proportion of the problems at the low accuracy requirement, OMoRF achieved convergence at the high accuracy requirement for a greater proportion of these problems than any other solver. In particular, at the high accuracy requirement, OMoRF was able to achieve convergence for approximately 60\% of these problems. COBYLA, the next best solver, managed to achieve convergence for only approximately 50\% of these problems.

\begin{figure}[ht!]
\begin{center}
\begin{subfigmatrix}{2}
\subfigure[Data profile: $\tau = 10^{-1}$]{\includegraphics[width=7cm]{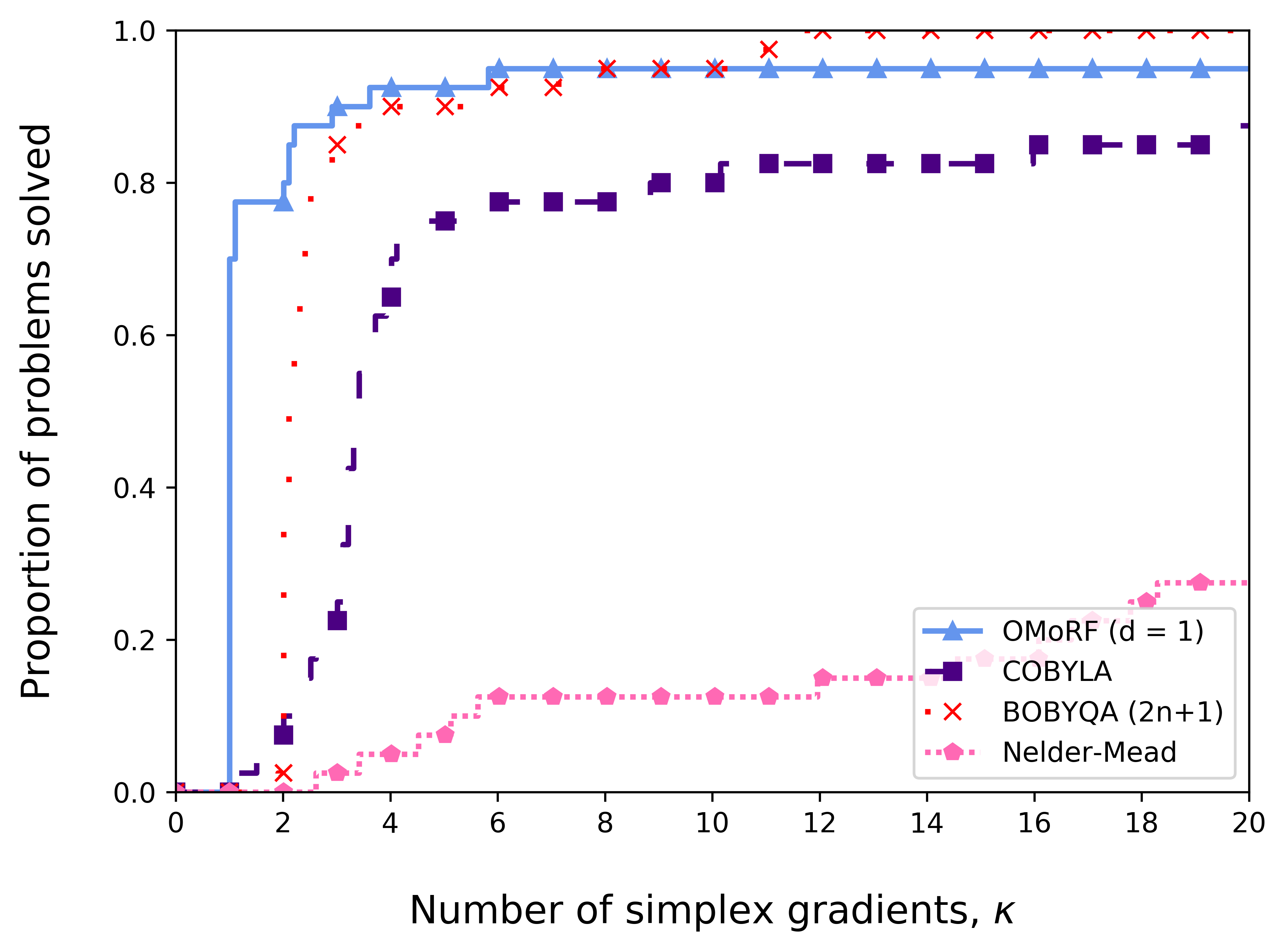}} 
\subfigure[Performance profile: $\tau = 10^{-1}$]{\includegraphics[width=7cm]{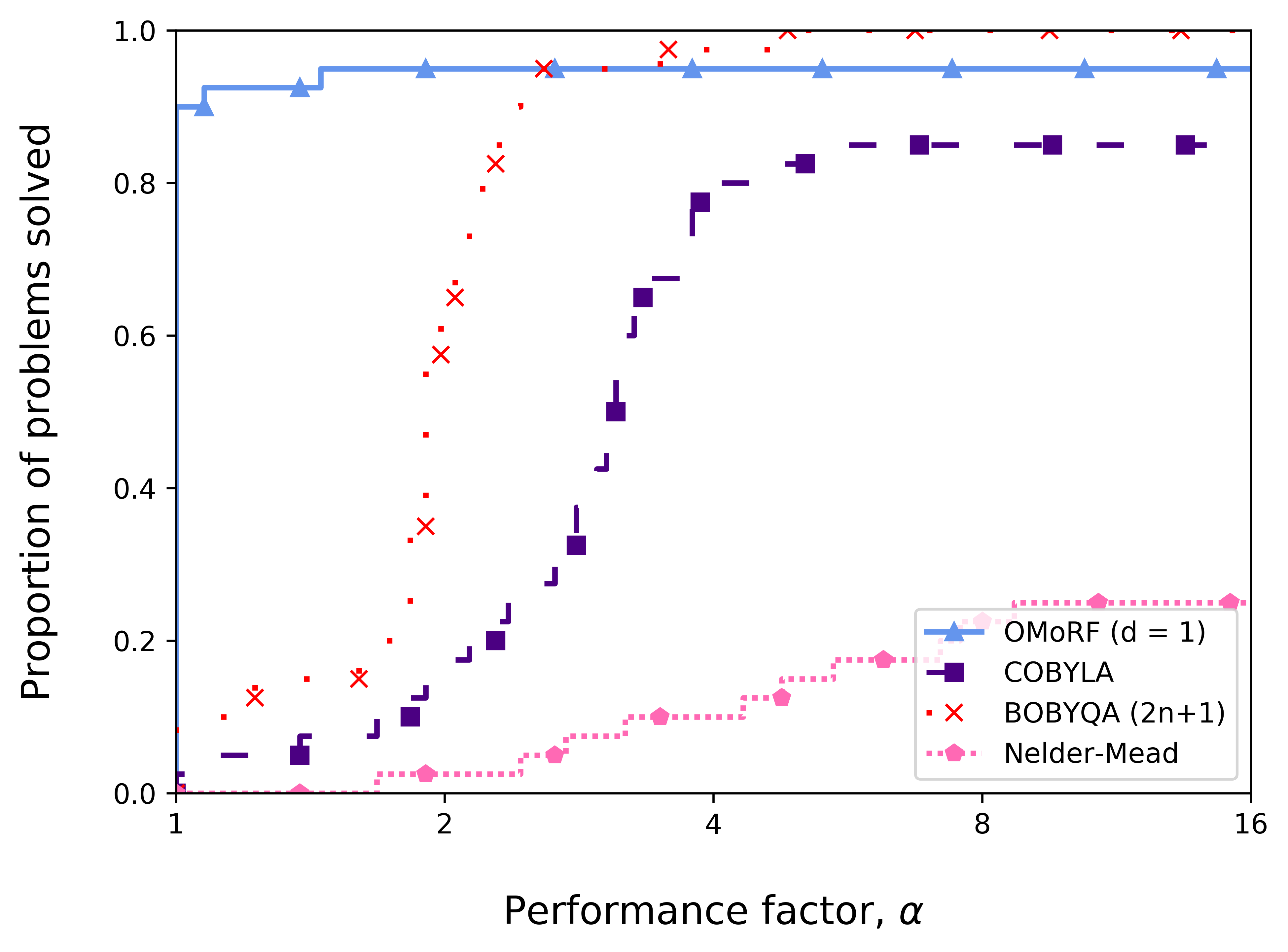}}
\subfigure[Data profile: $\tau = 10^{-5}$]{\includegraphics[width=7cm]{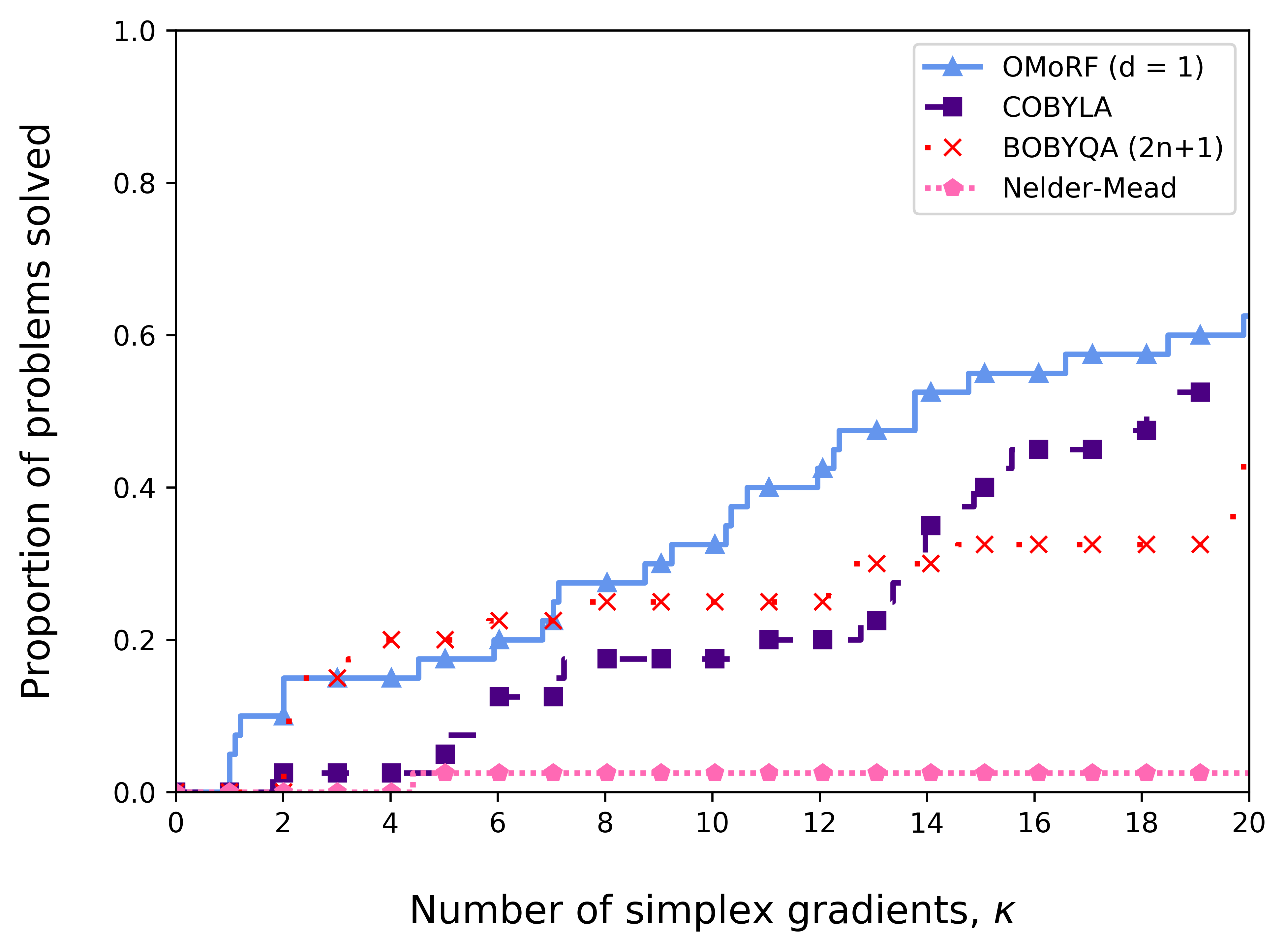}}
\subfigure[Performance profile: $\tau = 10^{-5}$]{\includegraphics[width=7cm]{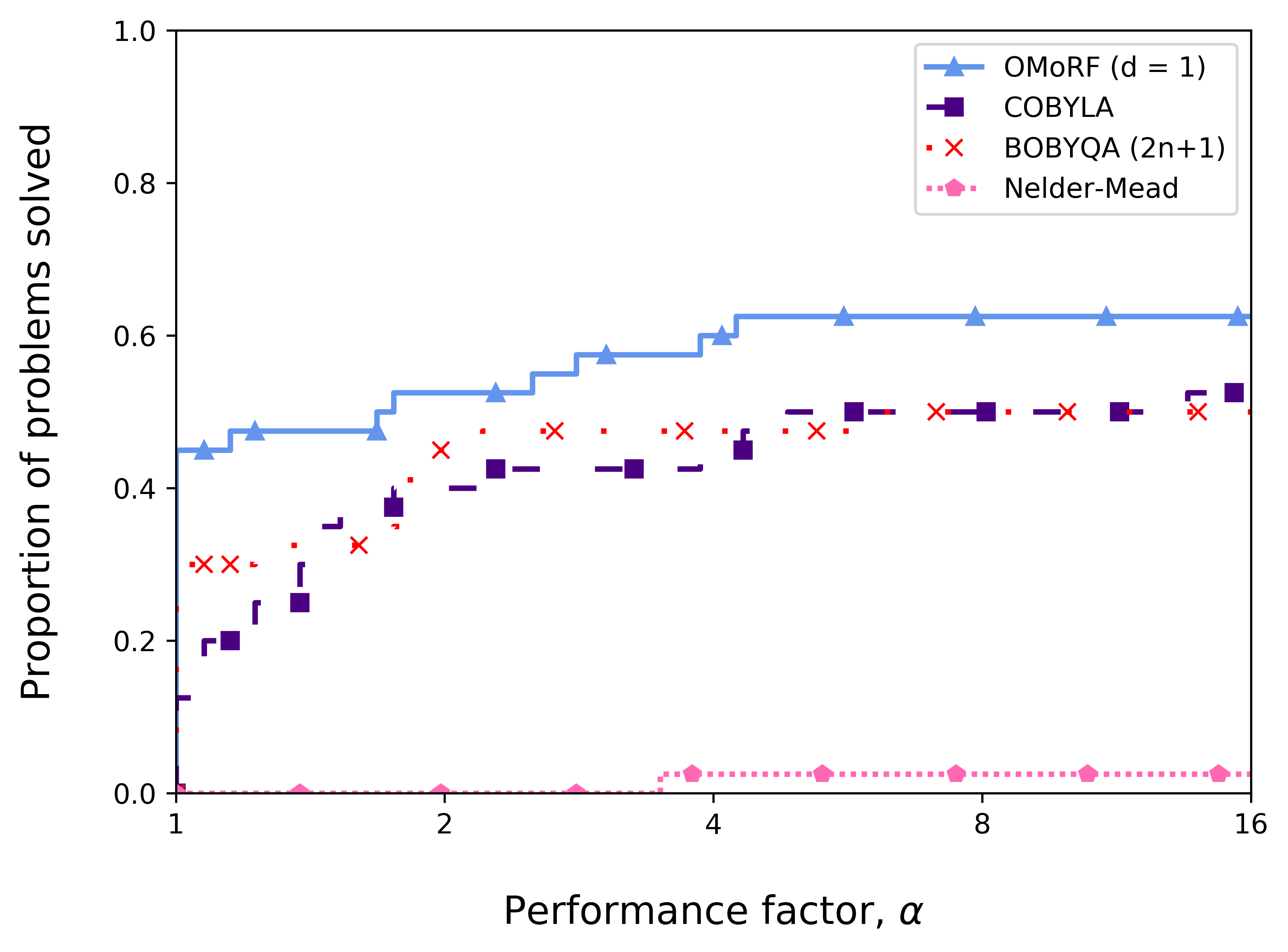}}
\end{subfigmatrix}
\end{center}
\caption{Data and performance profiles for problems of high dimension from the CUTEst test set at $\tau = 10^{-1}$ and $\tau = 10^{-5}$.}
\label{fig:Cutest_Profiles_Large}
\end{figure}

\subsection{Aerodynamic design problem}

To demonstrate the efficacy of OMoRF when optimizing high-dimensional functions which are computationally intensive, design optimization of the ONERA-M6 transonic wing, parameterized by 100 free-form deformation (FFD) design points, has been used as a test problem. The objective is to minimize inviscid drag subject to bound constraints on the FFD parameters. This problem has been adapted from an open source tutorial \citep{Palacios2017}. Furthermore, it has been used for testing design optimization algorithms and approaches in multiple studies \citep{Lukaczyk2014, QIU2018}. In this study, this problem has been formulated as 
\begin{equation}
\label{eq:ONERA_opt}
\begin{split}
\min_{\mathbf{x} \in \mathbb{R}^{100}} \quad & C_D(\mathbf{x}) \\
\text{subject to} \quad & \mathbf{x} \in [-0.1, 0.1]^{100}
\end{split}
\end{equation}
with $\mathbf{x}$ denoting the FFD parameters and $C_D(\mathbf{x})$ the drag coefficient. The flight conditions are of steady flight at a free-stream Mach number of 0.8395 and an angle of attack of 3.06$^{\circ}$. The Euler solver provided by the open source computational fluid dynamics (CFD) simulation package SU$^2$ \citep{Palacios2013} was used to evaluate each design. A single CFD simulation required approximately 5 minutes on 8 CPU cores of a 3.7 GHz Ryzen 2700X desktop. Given this computational burden, a strict limit of 500 function evaluations was specified when optimizing this problem. It is noted that, although derivatives of this objective function may be obtained using algorithmic differentiation, this problem still provides a useful test problem for high-dimensional, computationally intensive design optimization. 

For this problem, both OMoRF $(d=1)$ and OMoRF $(d=2)$ were included in the solver comparison. Additionally, both BOBYQA with $n+2$ and $2n+1$ points have been included. Figure \ref{fig:ONERA_Convergence} shows the convergence plot and Table \ref{tab:ONERA_Results} shows the final attained drag coefficients for this design optimization problem. Although BOBYQA $(n+2)$ shows very quick initial progress, achieving a drag coefficient of less than $3 \times 10^{-3}$ within 200 function evaluations, its progress afterwards stalls. In fact, OMoRF $(d=2)$ outperforms it from 400 function evaluations onward, and from 450 function evaluations onward so do COBYLA and BOBYQA $(2n+1)$. Moreover, not only does OMoRF $(d=2)$ show very rapid progress, it ultimately outperforms all of the other solvers by achieving the smallest drag coefficient within the computational budget. Interestingly, although OMoRF $(d=1)$ generally performed quite well in the previous test problems, its performance was significantly worse for this problem. It is hypothesized that, in this case, the underlying problem dimension is best described using more than one dimension. This is in agreement with the findings from previous studies \citep{Lukaczyk2014, QIU2018}. In particular, \cite{Lukaczyk2014} discovered that the drag coefficient response of the ONERA-M6 wing was best described using at least a 2-dimensional subspace. Moreover, as discussed for the previous tests, the cost of constructing 2-dimensional ridge function surrogates is considerably less noticeable for high-dimensional problems, making OMoRF $(d=2)$ generally a more competitive algorithm as the problem dimension increases. In this case, the benefits of capturing the underlying 2-dimensional behaviour clearly outweighed the disadvantages of requiring more sample points to update the ridge function surrogate models.

\begin{figure}[ht!]
 \centering
 \includegraphics[width=9cm]{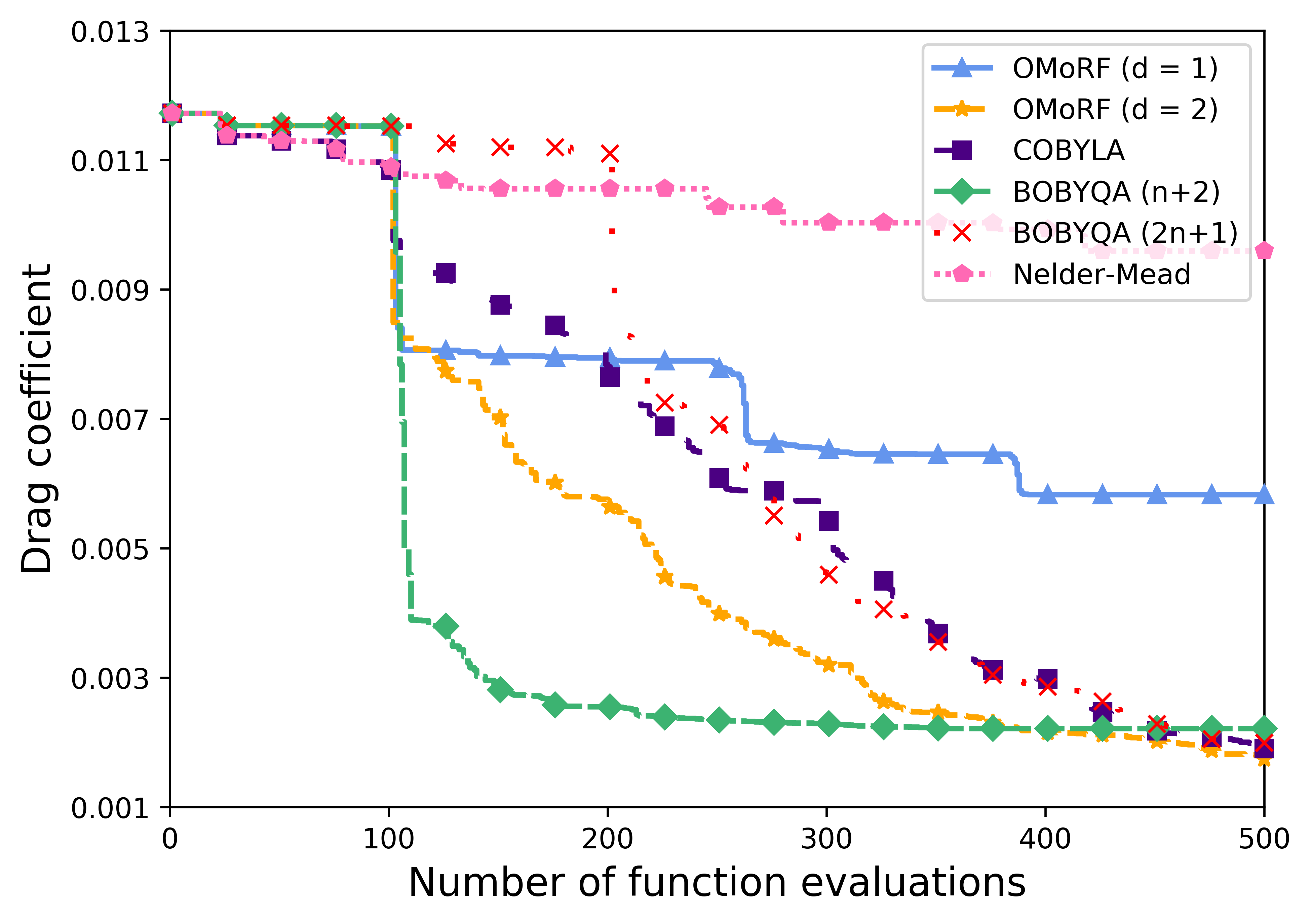}
 \caption{Convergence of the ONERA M6 design optimization problem with $n=100$ variables.}
 \label{fig:ONERA_Convergence}
\end{figure}

\begin{table}[htp]
\caption{Final values of drag coefficient $C_D$ obtained by the tested solvers.} 
\label{tab:ONERA_Results} 
\centering
\begin{tabular}{cc}
\hline
Solver & $C_D$ \\
\hline
OMoRF ($d=1$) & $5.834 \times 10^{-3}$ \\
OMoRF ($d=2$) & $1.743 \times 10^{-3}$ \\
COBYLA	& $1.905 \times 10^{-3}$ \\
BOBYQA $(n+2)$ & $2.215 \times 10^{-3}$ \\
BOBYQA $(2n+1)$ & $1.996 \times 10^{-3}$ \\
Nelder-Mead & $9.598 \times 10^{-3}$ \\
\hline
\end{tabular}
\end{table}

\section{Conclusion}
\label{sec:conclusion}

A novel DFTR method, which leverages output-based dimension reduction in a trust region framework, has been presented. This approach is based upon the idea that, by reducing the effective problem dimension, functions of moderate to high dimension may be modeled using fewer samples. Using these reduced dimension surrogate models for model-based optimization may then lead to accelerated convergence. Although many functions cannot be modeled to sufficient accuracy by globally defined ridge functions, the use of local subspaces allows for greater flexibility, while also maintaining the computational benefits of dimension reduction. Although not proven, the full linearity of ridge function models has been discussed and using this discussion, a motivation for using moving ridge functions has been presented. The efficacy of this algorithm was demonstrated on a number of test problems, including high-dimensional aerodynamic design optimization. Future work will focus on providing further theoretical statements on the convergence properties of this algorithm, extending this method to the case of general nonlinear constraints, and applying this approach to other optimization problems.

\section*{Acknowledgements}

The authors would like to sincerely thank Dr Pranay Seshadri for his invaluable assistance and advice when developing this algorithm. The first author would also like to thank Dr Hui Feng for her useful suggestions and complete support. The authors also thank three anonymous referees for their valuable comments and suggestions.

\section*{Disclosure statement}

No potential conflict of interest was reported by the authors.

\section*{Funding}

This work was supported by the Weir Advanced Research Centre (WARC) through an Engineering and Physical Sciences Research Council (EPSRC) Industrial Cooperative Awards in Science \& Technology (CASE) Studentship under Grant Number RG94532.

\bibliographystyle{tfcad}
\bibliography{references}

\section{Appendices}

\appendix

\section{Proof of Lemma 3.2}
\label{app:proof1}
\begin{proof}
Let $\nabla \hat{f}$ denote $\nabla \hat{f}(\mathbf{x})$ and $\nabla f$ denote $\nabla f(\mathbf{x})$. First, observe that
\begin{equation*}
\| \nabla \hat{f} + \nabla f\| = \| \nabla \hat{f}- \nabla f + 2 \nabla f \| \leq \| \nabla \hat{f} - \nabla f\| + 2 \| \nabla f \| \leq \omega_h + 2 \gamma_f,
\end{equation*}
where $\gamma_f$ is the Lipschitz constant of $\nabla f$. Next,
\begin{equation*}
\begin{split}
\| \nabla \hat{f} \nabla \hat{f}^T - \nabla f \nabla f^T \| & = \frac{1}{2}\| (\nabla \hat{f} + \nabla f) (\nabla \hat{f} - \nabla f)^T + (\nabla \hat{f} - \nabla f) (\nabla \hat{f} + \nabla f)^T \| \\
& \leq \| (\nabla \hat{f} + \nabla f) (\nabla \hat{f} - \nabla f)^T \| \\
& \leq (\omega_h + 2 \gamma_f) \omega_h.
\end{split}
\end{equation*}
Finally,
\begin{equation*}
\begin{split}
\left\Vert \mathbf{C} - \hat{\mathbf{C}} \right\Vert = & \left\Vert \int_B (\nabla \hat{f} \nabla \hat{f}^T - \nabla f \nabla f^T) \rho(\mathbf{x}) d \mathbf{x} \right\Vert \\
& \leq \int_B \left\Vert \nabla \hat{f} \nabla \hat{f}^T - \nabla f \nabla f^T \right\Vert  \rho(\mathbf{x}) d \mathbf{x} \\
& \leq (\omega_h + 2 \gamma_f) \omega_h.
\end{split}
\end{equation*}
\end{proof}

\section{Proof of Theorem 3.5}
\label{app:proof2}
\begin{proof}
Let
$$m(\mathbf{y}) = c + \mathbf{g}^T \mathbf{y} + \frac{1}{2} \mathbf{y}^T \mathbf{H} \mathbf{y}$$
be a quadratic ridge function with $\mathbf{y} = \mathbf{U}^T \mathbf{x}$ and note
$$m(\mathbf{y}) = g(\mathbf{y}) - e^{m}(\mathbf{y}).$$
Subtracting $m(\mathbf{y})$ from $m(\mathbf{y}^i)$ gives
\begin{equation*}
(\mathbf{y}^i - \mathbf{y})^T \mathbf{g} + (\mathbf{y}^i - \mathbf{y})^T \mathbf{H} \mathbf{y} + \frac{1}{2} (\mathbf{y}^i - \mathbf{y})^T \mathbf{H} (\mathbf{y}^i - \mathbf{y}) = 
g(\mathbf{y}^i) - e^{m}(\mathbf{y}_i) - g(\mathbf{y}) + e^{m}(\mathbf{y}).
\end{equation*}
Taking the first-order Taylor expansion of $g(\mathbf{y}^i)$ around $\mathbf{y}$ and rearranging gives
\begin{multline}
\label{eq:proof_0}
(\mathbf{y}^i - \mathbf{y})^T \mathbf{e}^{m} (\mathbf{y}) = \int_0^1 (\mathbf{y}^i - \mathbf{y})^T \left[ \nabla g(\mathbf{y} + t(\mathbf{y}^i - \mathbf{y})) - \nabla g (\mathbf{y}) \right] dt \\
- \frac{1}{2} (\mathbf{y}^i - \mathbf{y})^T \mathbf{H} (\mathbf{y}^i - \mathbf{y}) + e^{m}(\mathbf{y}^i) - e^{m}(\mathbf{y}).
\end{multline}
To remove $e_{m}(\mathbf{y})$, subtract the above equation evaluated at $\mathbf{y}_k = \mathbf{U}^T \mathbf{x}_k$ from the $d+1$ equations to give
\begin{equation}
\label{eq:proof_equation}
\begin{split}
(\mathbf{y}^i - \mathbf{y}_k)^T \mathbf{e}^{m} (\mathbf{y}) & = \int_0^1 (\mathbf{y}^i - \mathbf{y})^T \left[ \nabla g(\mathbf{y} + t(\mathbf{y}^i - \mathbf{y})) - \nabla g (\mathbf{y}) \right] dt \\
& - \int_0^1 (\mathbf{y}_k - \mathbf{y})^T \left[ \nabla g(\mathbf{y} + t(\mathbf{y}_k - \mathbf{y})) - \nabla g (\mathbf{y}) \right] dt \\
& - \frac{1}{2} (\mathbf{y}^i - \mathbf{y})^T \mathbf{H} (\mathbf{y}^i - \mathbf{y}) 
+ \frac{1}{2} (\mathbf{y}_k - \mathbf{y})^T \mathbf{H} (\mathbf{y}_k - \mathbf{y})\\
& + e_{m}(\mathbf{y}_i) - e_{m}(\mathbf{y}_k).
\end{split}
\end{equation}
Equation \eqref{eq:proof_equation} is true for any $\mathbf{y} = \mathbf{U}^T \mathbf{x}$ such that $\mathbf{x} \in B$ and any $\mathbf{y}^i \in \mathcal{Y}$. 

Bounding the absolute value of each term of the right-hand side of \eqref{eq:proof_equation} provides a bound on $\| \mathbf{e}^{m} (\mathbf{y})\| $. First, note that
since $\nabla g$ is Lipschitz continuous with constant $\gamma_g$, the absolute values of the first two terms may be bounded as follows:
\begin{equation*}
\begin{split}
& \left| \int_0^1 (\mathbf{y}^i - \mathbf{y})^T \left[ \nabla g(\mathbf{y} + t(\mathbf{y}^i - \mathbf{y})) - \nabla g (\mathbf{y}) \right] dt \right| \\
& \leq \int_0^1 \| \mathbf{y}^i - \mathbf{y} \| \| \nabla g(\mathbf{y} + t(\mathbf{y}^i - \mathbf{y})) - \nabla g (\mathbf{y}) \| dt \\
& \leq \gamma_g \| \mathbf{y}^i - \mathbf{y} \|^2 \leq \gamma_g \|\mathbf{U}^T\|^2 \| \mathbf{x}^i - \mathbf{x} \|^2.
\end{split} 
\end{equation*}
Similarly, the absolute values of the third and fourth expressions can be bounded by using 
\begin{equation*}
| (\mathbf{y}^i - \mathbf{y})^T \mathbf{H} (\mathbf{y}^i - \mathbf{y}) | \leq \| \mathbf{H} \|_F \| \mathbf{y}^i - \mathbf{y} \|^2 \leq \| \mathbf{H} \|_F \|\mathbf{U}^T \|^2 \| \mathbf{x}^i - \mathbf{x} \|^2.
\end{equation*}
Finally, the absolute values of the last two terms may be bounded by using the fact that $m$ interpolates $f$ at the sample points $\mathbf{x}^i$
\begin{equation*}
| e_{m}(\mathbf{y}^i) | = | g(\mathbf{U}^T \mathbf{x}^i) - f(\mathbf{x}^i) + f(\mathbf{x}^i) - m(\mathbf{U}^T \mathbf{x}^i) | = | e^g(\mathbf{x}^i) |.
\end{equation*}
As $ \mathbf{x} \in B$, it is known that $\| \mathbf{x}^k - \mathbf{x} \| \leq \Delta$ and $\| \mathbf{x}^i - \mathbf{x} \| \leq 2 \Delta$. Therefore,
\begin{equation}
\label{eq:proof_2}
\begin{split}
| (\mathbf{y}^i - \mathbf{y}_k)^T \mathbf{e}^{m} (\mathbf{y}) | & \leq \frac{5}{2} \gamma_g \|\mathbf{U}^T\|^2 \Delta^2 + \frac{5}{2} \| \mathbf{H} \|_F \|\mathbf{U}^T\|^2 \Delta^2 + 2 \max_{\mathbf{x} \in B} | e^g(\mathbf{x}) |.
\end{split}
\end{equation}

As \eqref{eq:proof_2} is true for every $\mathbf{y} \in \mathcal{Y}$, it is also true for $\| \Delta \mathbf{Y}^T \mathbf{e}^m (\mathbf{y}) \|_{\infty}$. So using $\| \cdot \| \leq \sqrt{d}\| \cdot \|_{\infty}$, one may write
\begin{equation*}
\| \Delta \mathbf{Y}^T \mathbf{e}^m (\mathbf{y}) \| \leq \frac{5}{2} \sqrt{d} \|\mathbf{U}^T\|^2 \Delta^2 ( \gamma_g   + \| \mathbf{H} \|_F) + 2 \sqrt{d} \max_{\mathbf{x} \in B} | e^g(\mathbf{x}) |.
\end{equation*}
Finally, taking $\Delta$ to the right-hand side and using the fact that
$$\| \mathbf{e}^m (\mathbf{y}) \| \leq \| \mathbf{Y}^{-1} \| \| \mathbf{Y}^T \mathbf{e}^m (\mathbf{y}) \|, $$
the result
\begin{equation}
\begin{split}
\| \mathbf{e}^m (\mathbf{y}) \| \leq ( \gamma_g   + \| \mathbf{H} \|_F) & \frac{5 \sqrt{d} \| \mathbf{Y}^{-1} \| \|\mathbf{U}^T\|^2}{2} \Delta \\
& + \frac{2 \sqrt{d} \| \mathbf{Y}^{-1} \|}{\Delta}  \max_{\mathbf{x} \in B} | e^g(\mathbf{x}) |
\end{split}
\end{equation}
is obtained.

To obtain a similar bound on $e^m(\mathbf{y})$, note from \eqref{eq:proof_0} that
$$ | e^m(\mathbf{y}) | \leq \| \mathbf{e}^m (\mathbf{y}) \| \| \mathbf{y}^i - \mathbf{y} \|^2 + \frac{1}{2} \Delta^2 (\gamma_g + \| \mathbf{H} \|) + | e^m(\mathbf{y}_k) |. $$ 
Rearranging and collecting like terms gives 
\begin{equation}
\begin{split}
| e^{m} (\mathbf{y}) | \leq \| \mathbf{U}^T \|^2 (\gamma_g + \| \mathbf{H} \|_F) & \frac{5 \sqrt{d} \| \mathbf{Y}^{-1} \| \| \mathbf{U}^T \| + 1}{2} \Delta^2  \\
& + (2 \sqrt{d} \| \mathbf{Y}^{-1} \| \| \mathbf{U}^T \| + 1) \max_{\mathbf{x} \in B} | e^{g} (\mathbf{x}) |\
\end{split}
\end{equation}
as required.
\end{proof}

\section{Modified pivotal algorithm for updating interpolation sets}
\label{app:pivotal_algorithm}

In Algorithm \ref{alg:model_improvement}, a mechanism for choosing samples to be replaced and improving the geometry of interpolation sets was alluded to. This mechanism is presented in Algorithm \ref{alg:incremental_improvements} for clarity. This algorithm has been adapted from Algorithm 5.1 in \cite{Conn2008}. It applies Gaussian elimination with row pivoting to $\tilde{\mathbf{M}}$ to place an upper bound on $\|\tilde{\mathbf{M}}^{-1} \|$, in turn bounding the condition number of $\tilde{\mathbf{M}}$. The pivot polynomial basis $\mu_i$ for $i=0,\dots,q-1$ is related to the rows of the upper triangular matrix $\mathbf{U}$ in the LU factorization of $\tilde{\mathbf{M}}$. Rows which have pivot values $\mu_i(\mathbf{x}_i)$ of small absolute value make large contributions to $\|\tilde{\mathbf{M}}^{-1} \|$. Samples in $\mathcal{S}_k$ correspond to rows of $\tilde{\mathbf{M}}$, so by replacing these samples with ones which have higher pivot values, the geometry of $\mathcal{S}_k$ can be improved. The aim of the search \eqref{eq:modified_check} is to find and prioritize samples which have high pivot values over those which have low pivot values. Finally, if the geometry needs to be improved, the optimization problem \eqref{eq:find_new_point} is solved, returning the point in the trust region of maximal absolute pivot value. 

\begin{algorithm}[h!]
  	\caption{Modified pivotal algorithm for $m$-dimensional polynomial interpolation of degree $r$}
  	\begin{algorithmic}[1]
  	\State Let $\mathcal{S}_k = \{ \mathbf{x}_k, \mathbf{x}^2 \dots, \mathbf{x}^q, \dots \}$ be a set of at least $q = {m+r \choose r}$ samples.
  	\State Initialize the pivot polynomial basis $\mu_0(\mathbf{x}) = \phi_0(\mathbf{x}_k)$ and
  	$$\mu_j(\mathbf{x}) = \phi_j(\mathbf{x}) - \frac{\phi_j(\mathbf{x}_k)}{\phi_0(\mathbf{x}_k)} \phi_0(\mathbf{x})$$ 
  	for $j=1,\dots,q-1$, where $\phi(\mathbf{x})$ is an $m$-dimensional polynomial basis of maximum degree $r$.
  	\State Set $\mathcal{S}_{k+1} = \{ \mathbf{x}_k \}$ and remove $\mathbf{x}_k$ from $\mathcal{S}_k$.
	\For{$i = 1, \dots, q-1 $}
     \If{improving geometry}
    	\State Set 
    	\begin{equation}
    	\label{eq:find_new_point}
    		\mathbf{x}^{t} = \argmax_{\mathbf{x} \in B(\mathbf{x}_k, \Delta_k)} | \mu_{i}(\mathbf{x}) |
    	\end{equation} 
    	and evaluate $f(\mathbf{x}^{t})$.
    \Else
		\State Find 
		\begin{equation}
		\label{eq:modified_check}
			\mathbf{x}^{t} = \argmax_{\mathbf{x}^{j} \in \mathcal{S}_k} \left\lbrace \frac{| \mu_i(\mathbf{x}^j) |}{\max( \| \mathbf{x}^j - \mathbf{x}_k \|^4 / \Delta_k^4, 1) } \right\rbrace
		\end{equation}
		 and remove $\mathbf{x}^t$ from $\mathcal{S}_k$.
	\EndIf
	\State Append $\mathbf{x}^{t}$ to $\mathcal{S}_{k+1}$.
	\State Update the pivot polynomial basis $\mu_i(\mathbf{x}) = \mu_i(\mathbf{x}^t)$ and
		$$\mu_j(\mathbf{x}) = \mu_j(\mathbf{x}) - \frac{\mu_j(\mathbf{x}^t)}{\mu_{i}(\mathbf{x}^t)}\mu_{i}(\mathbf{x})$$ 
  		for $j=i+1,\dots,q-1$.
    \EndFor \\
    \Return $\mathcal{S}_{k+1}$
\end{algorithmic}
\label{alg:incremental_improvements}
\end{algorithm} 

There are a few points to note about Algorithm \ref{alg:incremental_improvements}. First, when choosing a point to be replaced, lines 5--6 in Algorithm \ref{alg:incremental_improvements} are omitted. Second, if during the search for points with low pivot values $\mu_i$, no consideration is given to the distance the point is from the current iterate, then situations occur where points which are within the trust region are replaced before points which are not. Therefore, an idea from \cite{Powell2009} has been borrowed which gives preference to points within the trust region. In particular, the term $| \mu_i(\mathbf{x}^j) |$ is divided by $\max( \| \mathbf{x}^j - \mathbf{x}_k \|^4 / \Delta_k^4, 1)$ in the search \eqref{eq:modified_check}. Although alternative methods which seek a balance between preserving the quality of the interpolation set and the close proximity of sample points are available (see \cite{Scheinberg2010}), this method of dividing the pivot value $\mu_i$ by the relative distance of point $\mathbf{x}^j$ to the power of four (i.e. $\| \mathbf{x}^j - \mathbf{x}_k \|^4 / \Delta_k^4$) when $\mathbf{x}^j$ lies outside the trust region seems to work quite well in practice. Third, when performing geometry-improving steps, the conditional in line 5 is not activated until the last iteration (i.e. $i=q-1$). This means that, at each iteration, only a single new geometry-improving sample point will be calculated, leading to incremental improvements in the quality of the interpolation set. Fourth, although the points which improve $\mathcal{X}_k^{int}$ are different than for $\mathcal{X}_k$, Algorithm \ref{alg:incremental_improvements} may be applied to both of these sets by a suitable choice of dimension $m$, degree $r$ and polynomial basis $\phi(\mathbf{x})$. In the case of $\mathcal{X}_k$, the dimension $n$, degree $1$ and the natural polynomial basis 
\begin{equation}
\label{eq:natural_basis}
\phi(\mathbf{x}) = \{ 1, x_1, \dots, x_n, \frac{1}{2}x_1^2, x_1 x_2, \dots, \frac{1}{(r-1)!} x_{n-1}^{r-1} x_n, \frac{1}{r!} x_n^r \}
\end{equation} are used, whereas, in the case of $\mathcal{X}_k^{int}$, the dimension $d$, degree 2 and $\phi(\mathbf{U}_k^T \mathbf{x})$, i.e. the natural polynomial basis defined on the subspace $\mathbf{U}_k$, are used. Finally, although this algorithm can be applied to any set $\mathcal{S}_k$, \cite{Conn2008} found it to be most effective when applied to the shifted and scaled set
\begin{equation}
\tilde{\mathcal{S}}_k = \left\lbrace
\frac{\mathbf{x}^i-\mathbf{x}_k}{\tilde{\Delta}} \mid \mathbf{x}^i \in \mathcal{S}_k \right\rbrace \subset B(\mathbf{0},1),
\end{equation} 
where 
$$\tilde{\Delta} = \max_{\mathbf{x}^i \in \mathcal{S}_k} \| \mathbf{x}^i - \mathbf{x}_k \|.$$
This is the same approach that is used in OMoRF when updating the sample sets.

\section{CUTEst problems}
\label{app:moderate}

The tables below provide lists of the problems which were used in the optimization studies. Table \ref{tab:medium_dimension} contains the problems of moderate dimension $10 \leq n < 50$ and Table \ref{tab:high_dimension} contains the problems of high dimension $50 \leq n \leq 100$. In the tables below $f_L$ is the minimum attained value from all the tested solvers within the computational budget of 20 simplex gradients. Values of $f(\mathbf{x}_0)$ and $f_L$ have been rounded to the nearest 7 significant figures.

\begin{table}[htpb]
\caption{Details of test problems of dimension $10 \leq n < 50$ taken from the CUTEst \citep{Gould2015} test set, including the problem dimension $n$, the initial value $f(\mathbf{x}_0)$ and the minimum attained value found by any of the available solvers $f_L$. Problems marked with a $^*$ have bound constraints.} 
\label{tab:medium_dimension} 
\centering
\tiny
\begin{tabular}{ccccc}
\hline
\# 	&	Problem    	& $n$ 	& $f(\mathbf{x}_0)$ & $f_L$ \\
\hline
1	& ARGLINA     	& 10 	& 430  						& 389.9999	\\
2	& ARGLINB     	& 10	& 6.476671$\times 10^{10}$  & 99.62547	\\
3 	& ARGLINC      	& 10  	& 4.083138$\times 10^{10}$  & 101.1255	\\
4	& ARGTRIGLS    	& 10 	& 2.966540  				& 0	\\
5	& BOX     		& 10 	& 0  						& -0.1725693 \\
6	& BOXPOWER     	& 10 	& 72.36521  				& 4.845789$\times 10^{-3}$ 	\\
7 	& BROWNAL      	& 10 	& 273.248  					& 6.64347$\times 10^{-5}$\\
8	& DIXMAANA    	& 15 	& 143.5  					& 1 	\\
9	& DIXMAANB    	& 15 	& 228.25  					& 1 	\\
10	& DIXMAANC     	& 15 	& 395.5 					& 1.000002	\\
11	& DIXMAAND     	& 15	& 756.76  					& 1	\\
12 	& DIXMAANE      & 15  	& 113.5  					& 1.000535  	\\
13	& DIXMAANF    	& 15 	& 199.25  					& 1.000235 	\\
14	& DIXMAANG     	& 15 	& 365.5  					& 1.000454	\\
15	& DIXMAANH     	& 15 	& 724.6  					& 1.000555 	\\
16 	& DIXMAANI      & 15  	& 103.1667  				& 1.001657 	\\
17	& DIXMAANJ$^*$  & 15 	& 189.1056  				& 1.004441 	\\
18	& DQDRTIC		& 10 	& 14472 					& 0 	\\
19	& HATFLDGLS     & 25 	& 27  						& 2.991629$\times 10^{-3}$	\\
20 	& HILBERTA      & 10  	& 60.18943  				& 7.493782$\times 10^{-5}$ 	\\
21	& HILBERTB    	& 10 	& 510.1894  				& 0 	\\
22	& HYDCAR6LS     & 29 	& 704.1073  				& 3.274127	\\
23	& MCCORMCK     	& 10 	& 9  						& -9.646185	\\
24	& METHANL8LS    & 31 	& 4345.1  					& 13.042193 	\\
25	& MOREBV      	& 10  	& 0.01598655  				&  1.020147$\times 10^{-3}$	\\
26	& NCVXBQP1    	& 10 	& -55.125  					& -22050 	\\
27	& NCVXBQP2$^*$  & 10 	& -28.125  					& -14381.865	\\
28	& NCVXBQP3     	& 10 	& -14.625  					& -11957.805 	\\
29	& NONDIA    	& 10 	& 3604.0  					& 1.070407	\\
30	& PENALTY1    	& 10 	& 148032.5  				& 1.119897$\times 10^{-4}$	\\
31	& PENALTY2     	& 10 	& 162.6528  				& 2.975281$\times 10^{-4}$	\\
32	& POWER     	& 10 	& 3025  					& 1.347023$\times 10^{-3}$ 	\\
33 	& POWERSUM      & 10 	& 2.851305$\times 10^9$  	& 1.604428$\times 10^7$	\\
34	& PROBPENL    	& 10 	& 1600  					& 0 	\\
35	& SANTALS    	& 21 	& 1.430615  				& 0.04634251 	\\
36	& SCHMVETT     	& 10 	& -22.88052  				& -23.99999 	\\
37	& TQUARTIC     	& 10	& 0.81  					& 2.051379$\times 10^{-3}$	\\
38 	& TRIGON1      	& 10	& 2.96654 					& 0 	\\
39	& TRIGON2    	& 10	& 51.08556  				& 2.80259 	\\
40	& VARDIM     	& 10	& 2.198551$\times 10^6$  	& 0.04920879	\\
\hline
\end{tabular}
\normalsize
\end{table}

\begin{table}[htpb]
\caption{Details of test problems of dimension $50 \leq n \leq 100$ taken from the CUTEst \citep{Gould2015} test set, including the problem dimension $n$, the initial value $f(\mathbf{x}_0)$ and the minimum attained value found by any of the available solvers $f_L$. Problems marked with a $^*$ have bound constraints.} 
\label{tab:high_dimension} 
\centering
\tiny
\begin{tabular}{ccccc}
\hline
\# 	&	Problem    	& $n$ 	& $f(\mathbf{x}_0)$ & $f_L$ \\
\hline
1	& ARGLINA$^*$   	& 50 	& 550  						& 350  	\\
2	& ARGLINB     		& 50 	& 3.480995$\times 10^{13}$  & 99.62547	\\
3	& ARGLINC     		& 50 	& 3.160263$\times 10^{13}$  & 101.1255	\\
4 	& ARGTRIGLS     	& 50  	& 16.32621  				& 2.997498$\times 10^{-3}$	\\
5	& BA-L1LS    		& 57 	& 127387.8  				& 1.134976	\\
6	& BA-L1SPLS     	& 57 	& 127387.8  				& 0	\\
7 	& DIXMAANA      	& 90  	& 856  						& 1.000167 	\\
8 	& DIXMAANB      	& 90  	& 1409.5  					& 1.002449 	\\
9	& DIXMAANC    		& 90 	& 2458  					& 1.000219 	\\
10	& DIXMAAND     		& 90 	& 4722.76 					& 1.000204	\\
11	& DIXMAANE     		& 90 	& 665.5833  				& 1.026302 	\\
12 	& DIXMAANF      	& 90  	& 1225.292  				& 1.003309 	\\
13	& DIXMAANG$^*$  	& 90 	& 2267.583  				& 1.004975 	\\
14	& DIXMAANH     		& 90 	& 4518.933  				& 1.004104	\\
15	& DIXMAANI     		& 90 	& 603.591  					& 1.043307 	\\
16 	& DIXMAANJ      	& 90  	& 1164.3  					& 1.004421 	\\
17	& DQRTIC    		& 50 	& 86832 					& 0 	\\
18	& ENGVAL1     		& 50 	& 2891  					& 53.58364	\\
19	& HYDC20LS     		& 99 	& 1341.663  				& 17.23815 	\\
20 	& LUKSAN12LS    	& 98  	& 32160  					& 4119.833	\\
21	& LUKSAN13LS    	& 98 	& 64352  					& 25550.51 	\\
22	& LUKSAN14LS$^*$    & 98 	& 26880  					& 164.4811	\\
23	& LUKSAN15LS    	& 100 	& 27015.85  				& 3.569885 	\\
24 	& LUKSAN16LS    	& 100  	& 13068.48  				& 3.569714 	\\
25	& LUKSAN17LS    	& 100 	& 1.68737$\times 10^7$  	& 25.26664	\\
26	& LUKSAN22LS    	& 100 	& 24876.86  				& 871.1351	\\
27	& MCCORMCK     		& 50 	& 49  						& -46.12886 	\\
28	& MOREBV    		& 50 	& 1.321345$\times 10^{-3}$  & 1.42846$\times 10^{-5}$ 	\\
29	& NCVXBQP1    		& 50 	& -1258.875  				& -507223.9 	\\
30	& NCVXBQP2     		& 50 	& -703.125  				& -338857.6	\\
31	& NCVXBQP3     		& 50 	& 64.125  					& -183479.1 	\\
32 	& NONDIA      		& 50 	& 19604  					& 0.432696	\\
33	& PENALTY1    		& 50 	& 1.842534$\times 10^{9}$  	& 4.898239$\times 10^{-4}$ 	\\
34	& PENALTY2     		& 50 	& 100969.4  				& 4.300743	\\
35 	& POWER$^*$     	& 50  	& 1.625625$\times 10^7$  	& 0.175913 	\\
36	& PROBPENL    		& 50 	& 57600  					& 0 	\\
37	& SPARSQUR    		& 50	& 358.5938  				& 0 	\\
38	& TQUARTIC     		& 50	& 0.81  					& 0.04204344	\\
39	& TRIDIA     		& 50	& 1274  					& 6.544255$\times 10^{-5}$ 	\\
40	& VARDIM      		& 50	& 5.432025$\times 10^{11}$ 	& 0.3873602 	\\
\hline
\end{tabular}
\normalsize
\end{table}

\end{document}